\definecolor{dark-red}{rgb}{0.5,0.15,0.15}
\definecolor{dark-blue}{rgb}{0.15,0.15,0.6}
\definecolor{dark-green}{rgb}{0.15,0.6,0.15}
\mathchardef\mhyphen="2D % nice math hyphen
\newcommand{\euscr}[1]{\EuScript{#1}} % Euler script
\newcommand{\ccat}{\euscr{C}} % category C in Euler script 
\newcommand{\dcat}{\euscr{D}} % category D in Euler script
\newcommand{\presheaves}{\mathcal{P}}
\newcommand{\Fun}{\textnormal{Fun}} % functor category
\newcommand{\Hom}{\textnormal{Hom}} % homomorphims 
\newcommand{\map}{\textnormal{map}} % mapping space
\newcommand{\inftycat}{$\infty$-category } % an infinity-category
\newcommand{\inftygroupoid}{$\infty$-groupoid} % an infinity-groupoid
\newcommand{\inftyc}{\euscr{C}} % an infinity-category C
\newcommand{\inftyd}{\euscr{D}} % an infinity-category D
\newcommand{\spaces}{\euscr{S}} % the category of spaces
\newcommand{\abeliangroups}{\euscr{A}b} % the category of spaces
\newcommand{\largespaces}{\widehat{\euscr{S}}} % the category of large spaces 
\newcommand{\spheres}{\euscr{S}ph} % the category of spheres
\newcommand{\Aut}{\mathrm{Aut}} % automorphisms
\newcommand{\Alg}{\mathrm{Alg}} % algebras
\newcommand{\inftycatcat}{\euscr{C}at_{\infty}} % the category of infty categories
\newcommand{\ltopoi}{\euscr{LT}op} % infty-cat LTop of topoi
\newcommand{\fun}{\mathrm{Fun}} % functor infty-category
\newcommand{\pspaces}{\euscr{S}_{*}} % the category of pointed spaces
\newcommand{\sets}{\euscr{S}et} % the category of sets
\newcommand{\B}{\mathrm{B}} % iterated classifying space
\newcommand{\emspaces}{\euscr{EM}} % infty-cat of em spaces
\newcommand{\Mod}{\euscr{M}od} % module infinity-category
\theoremstyle{plain}
\newtheorem{theorem}{Theorem}[section]
\newtheorem{lemma}[theorem]{Lemma}
\newtheorem{prop}[theorem]{Proposition}
\newtheorem{cor}[theorem]{Corollary}
\newtheorem*{theorem*}{Theorem}
\theoremstyle{definition}
\newtheorem{example}[theorem]{Example}
\newtheorem{warning}[theorem]{Warning}
\newtheorem{notation}[theorem]{Notation}
\newtheorem{defin}[theorem]{Definition}
\newtheorem{rem}[theorem]{Remark}
\newtheorem{observation}[theorem]{Observation}
\newtheorem{construction}[theorem]{Construction}
\newtheorem{assumption}[theorem]{Assumption}
\newtheorem*{rem*}{Remark}
\newtheorem*{interpretation*}{Interpretation}
\newtheorem*{defin*}{Definition}
\newtheorem*{conjecture*}{Conjecture}
\newtheorem*{notation*}{Notation}
\newtheorem*{convention*}{Convention}
\newtheorem*{theorem_italics*}{Theorem}
\theoremstyle{remark}
  \def\subsection{\@startsection{subsection}{1}%
  \z@{.7\linespacing\@plus\linespacing}{.5\linespacing}%
  {\normalfont\bfseries\centering}}% NEW
\begin{document}

\title[Moduli of spaces with prescribed homotopy groups]{Moduli of spaces with prescribed homotopy groups}
\author[Piotr Pstr\k{a}gowski]{Piotr Pstr\k{a}gowski}
\address{Department of Mathematics, Harvard University, 1 Oxford St, Cambridge, MA 02138}
\email{pstragowski.piotr@gmail.com}

\begin{abstract}
We describe a homotopy-theoretic approach to the theory of moduli of realizations of Blanc-Dwyer-Goerss, reproducing their obstructions to realizing a given $\Pi$-algebra as homotopy groups of a pointed space. Our techniques are based on the $\infty$-category $\presheaves_{\Sigma}(\spheres)$ of product-preserving presheaves on finite wedges of positive-dimensional spheres, leading to more conceptual and streamlined arguments. 
\end{abstract}

\maketitle 

\tableofcontents

% the beginning sections do not go into the table of contents 
% \addtocontents{toc}{\protect\setcounter{tocdepth}{-1}}

\section{Introduction}
The main subject of the current work is a homotopy-theoretic approach to the theory of moduli of realizations of Blanc-Dwyer-Goerss \cite{realization_space_of_a_pi_algebra}, which yields a sequence of obstructions to realizing a $\Pi$-algebra $A$ as homotopy groups of a pointed space. 

By a homotopy-theoretic approach we mean that we work directly with the relevant $\infty$-categories and that everything we write down is manifestly homotopy-invariant. This greatly simplifies the exposition, as we use general arguments instead of calculations in specific model categories. The main technical tool is a theory of Postnikov invariants, which we develop in the generality of an arbitrary presheaf $\infty$-category using the Grothendieck construction, an approach which we believe is novel even in the classical case of spaces. 

The ideas first appearing in the work of Blanc-Dwyer-Goerss have been instrumental in the development of more sophisticated obstruction theories, in particular that of Goerss-Hopkins which studies ring spectra with prescribed homology \cite{moduli_spaces_of_commutative_ring_spectra, moduli_problems_for_structured_ring_spectra}. 

Similarly, the approach based on product-preserving presheaves introduced in this paper was followed by subsequent work of Paul VanKoughnett and the author which develops Goerss-Hopkins theory from the point of view of synthetic spectra \cite{pstrkagowski2018synthetic, pstrkagowski2022abstract}, as well as the work of Balderrama \cite{balderrama2021deformations}. As the case of $\Pi$-algebras and pointed spaces is quite concrete and elementary, we hope that the current work will be pedagogically useful also to readers ultimately interested in these more refined variants of obstruction theory. 

\subsection{Main results}

We now discuss the results of the paper in more detail.  Throughout the current work, by a space we mean an $\infty$-groupoid; these are also often called anima or homotopy types. 

Let $\pspaces$ denote the $\infty$-category of pointed spaces and $\spheres \hookrightarrow \pspaces$ the full subcategory spanned by finite wedges $\vee_{i \in I} S^{n_{i}}$ of positive-dimensional spheres. We have the following classical definition due to Stover: 

\begin{defin}[{\cite[\S 4.1]{stover1988van}}]
\label{defin:pi_algebra_definition_in_introduction}
A \emph{$\Pi$-algebra} is a functor $A \colon h \spheres ^{op} \rightarrow \sets$ that takes coproducts in the homotopy category of spheres into products of sets. 
\end{defin}
As any object in $h \spheres$ is a coproduct of $S^{i}$, a $\Pi$-algebra $A$ is determined by the sequence of sets
\[
A_{i} := A(S^{i})
\]
together with a specified family of maps between them. For example, the pinch map $S^{i} \rightarrow S^{i} \vee S^{i}$ gives each of $A_{i}$ a structure of a group, commutative if $i \geq 2$. One also has maps between $A_{i}$ given by precomposition with elements of the homotopy groups of spheres, as well as Whitehead products. As a consequence of the Hilton-Milnor theorem \cite{hilton1955homotopy}, a $\Pi$-algebra is uniquely defined by a sequence of groups together with these two classes of maps, satisfying appropriate identities. 

A $\Pi$-algebra is an algebraic construction which captures homotopy groups of a pointed space together with all of the primary operations between them. Indeed, the main example of such a structure is the $\Pi$-algebra of homotopy groups associated to a pointed space $X$, defined by
\[
(\pi_{*}X)(U) := \pi_{0} \map_{\spaces_{*}}(U, X).
\]
One calls $\Pi$-algebras of this form  \emph{realizable}. In general, it is a difficult question to determine whether a given $\Pi$-algebra is realizable or not \cite{blanc1995higher}.

\begin{defin}
If $A$ is a $\Pi$-algebra, then a pointed, connected space $X$ is a  \emph{realization} of $A$ if there exists an isomorphism $\pi_{*}X \simeq A$. We denote the full subgroupoid of $\spaces^{\cong}$ spanned by realizations by $\mathcal{M}(A)$ and call it the \emph{moduli of realizations}. 
\end{defin}
Our main result, which as explained above goes back to the work of Blanc-Dwyer-Goerss \cite{realization_space_of_a_pi_algebra}, gives a decomposition of the moduli $\infty$-groupoid $\mathcal{M}(A)$ in terms of cohomology of $\Pi$-algebras. 

To state our main result, we recall that an \emph{animated $\Pi$-algebra} is a product-preserving functor $h \spheres^{op} \rightarrow \spaces$. Together with natural transormations these assemble into an $\infty$-category $\Pi\mhyphen\Alg^{an}$, the animation of the category of $\Pi$-algebras\footnote{Following Cesnavicius and Scholze \cite[\S 5.1.4]{cesnavicius2019purity}, given a category $\inftyc$ generated under colimits by its subcategory $\inftyc^{cp}$ of compact projective objects, we call the free sifted cocompletion of $\inftyc^{cp}$ as an $\infty$-category the \emph{animation of $\inftyc$}, and denote it by $\inftyc^{an}$. If $\inftyc^{cp}$ is small, such as in the case of the category of $\Pi$-algebras, $\inftyc^{an} \simeq \presheaves_{\Sigma}(\inftyc^{cp})$, the $\infty$-category of product-preserving presheaves of spaces.}. This is the $\infty$-category whose mapping spaces are used to define Andr\'{e}-Quillen cohomology of $\Pi$-algebras. We write $A_{S^{n}}$ for the $A$-module defined by $A(U) := A(S^{n} \wedge U)$.

\begin{theorem}[\ref{example:moduli_of_0_stages}, \ref{prop:moduli_of_infinity_stages_as_a_limit}, \ref{thm:cartesian_square_of_moduli} or \cite{realization_space_of_a_pi_algebra}(1.1)]
\label{thm:main_theorem}
There exist $\infty$-groupoids $\mathcal{M}_{n}(A)$ and a tower 

\begin{center}
$\mathcal{M}(A) \rightarrow \ldots \rightarrow \mathcal{M}_{2}(A) \rightarrow \mathcal{M}_{1}(A) \rightarrow \mathcal{M}_{0}(A)$
\end{center}
such that
\begin{enumerate}
\item $\mathcal{M}_{0}(A) \simeq \B\Aut_{\Pi\mhyphen\Alg}(A)$, where $\Aut_{\Pi\mhyphen\Alg}(A)$ is the group of automorphisms of $A$,
\item for each $n \geq 1$ there exist a canonical cartesian square
\begin{center}
	\begin{tikzpicture}
		\node (TL) at (0, 1.5) {$ \mathcal{M}_{n}(A) $};
		\node (BL) at (0, 0) {$ \mathcal{M}_{n-1}(A) $};
		\node (TR) at (3.6, 1.5) {$ \B \Aut(A, A_{S^{n}}) $};
		\node (BR) at (3.6, 0) {$  \euscr{E}\mathrm{xt}(A, \B^{n+1}A_{S^{n}}) $};
		
		\draw [->] (TL) to (TR);
		\draw [->] (TL) to (BL);
		\draw [->] (TR) to (BR);
		\draw [->] (BL) to (BR);
	\end{tikzpicture},
\end{center}
where the bottom right vertex is the $\infty$-groupoid of extensions of $A$ by the module $\B^{n+1}A_{S^{n}}$ in animated $\Pi$-algebras, the top right vertex is the classifiying space of the group of automorphisms of the pair $(A, A_{S^{n}})$, and the right vertical map classifies the action on the split extension, 
\item $\mathcal{M}(A) \simeq \varprojlim \mathcal{M}_{n}(A)$. 
\end{enumerate}
\end{theorem}
As we explain in detail in \cref{rem:obstructions_to_realization}, \cref{thm:main_theorem} implies that there exist inductively defined cohomological obstructions 
\[
\theta_{n} \in \mathrm{H}^{n+2}_{A}(A, A_{S^{n}})
\]
to realizing $A$ as homotopy groups of a pointed space, where $n \geq 1$. In \cite{frankland_2_stage_postnikov_systems}, Frankland uses these methods to describe the moduli of realizations of $2$-stage $\Pi$-algebras.

\subsection{Overview of the approach}
\label{subsection:overview_of_the_approach}

Before describing our approach, let us briefly recall the key ideas appearing the work of Blanc, Dwyer and Goerss. The main technical tool in \cite{realization_space_of_a_pi_algebra} is the category of simplicial pointed topological spaces equipped with its $E_{2}$-model structure. 

In the latter model structure, first introduced in \cite{dwyer_kan_stover}, a map $X_{\bullet} \rightarrow Y_{\bullet}$ of simplicial pointed topological spaces is a weak equivalence if and only if the map $(\pi_{i} X)_{\bullet} \rightarrow (\pi_{i} Y)_{\bullet}$ is a weak equivalence of simplicial sets for all $i \geq 1$. Every levelwise weak equivalence of simplicial topological spaces has this property, but the class of $E_{2}$-weak equivalences is much more broad.

Cofibrant objects of the category of simplicial pointed topological spaces equipped with the $E_{2}$ structure can be thought of as ``nonabelian projective resolutions'' of a pointed topological space, namely the geometric realization. The main idea of the work of Blanc-Dwyer-Goerss is that instead of trying to construct a space realizing a given $\Pi$-algebra directly, one tries to construct its resolution. This can be then done inductively due to the additional simplicial direction.

There are two kinds of bigraded homotopy groups that one can associate to a simplicial pointed topological space, see \cite{bigraded_homotopy_groups},  one of them being the homotopy groups $\pi_{j} | (\pi_{i}X)_{\bullet}|$ of the geometric realization of the simplicial set $(\pi_{i} X)_{\bullet}$ for each $i \geq 1$.  Both kinds of bigraded homotopy groups are invariant under $E_{2}$-equivalences. In the work of Blanc-Dwyer-Goerss, computations are performed by playing the two kinds off each other using a certain long exact sequence, called the spiral exact sequence. 

Our approach begins with the observation, which the author first learned from Aaron Mazel-Gee, that the underlying $\infty$-category of the $E_{2}$-model structure can be described explicitly as the $\infty$-category $\presheaves_{\Sigma}(\spheres)$; that is, we have an equivalence of $\infty$-categories 
\[
s \euscr{T}\mathrm{op}[W_{E_{2}}^{-1}] \simeq \presheaves_{\Sigma}(\spheres),
\]
where the left hand side is the $\infty$-categorical localization of the category of simplicial topological spaces at the $E_{2}$-equivalences. 
Here, 
\[
\presheaves_{\Sigma}(\spheres) \subseteq \presheaves(\spheres) = \Fun(\spheres^{op}, \spaces)
\]
denotes the full subcategory spanned by those presheaves of spaces given which take finite coproducts in $\spheres$ to products of spaces. 

This motivates the question whether one can eschew the model category-theoretic techniques and develop the obstruction theory of Blanc-Dwyer-Goerss directly using the $\infty$-category $\presheaves_{\Sigma}(\spheres)$. To show that this is not only possible, but gives one a more clear picture of the theory, is the main subject of the current work. 

As an example of what we mean by a more clear picture, note that the other important model category appearing in \cite{realization_space_of_a_pi_algebra} is the category of simplicial $\Pi$-algebras equipped with its projective model structure. By a result of Bergner \cite{bergner_rigidification_of_algebras}, the underlying $\infty$-category of this model structure is equivalent to the $\infty$-category of animated $\Pi$-algebras $\presheaves_{\Sigma}(h \spheres)$; that is, product-preserving presheaves on the homotopy category $h \spheres$. One of the advantages of working directly with the relevant $\infty$-categories is that $\presheaves_{\Sigma}(\spheres)$ and $\presheaves_{\Sigma}(h \spheres)$ can now be treated in a uniform manner.

The projection $p \colon \spheres \rightarrow h \spheres$ onto the homotopy category induces an adjunction 
\[
p^{*} \colon \presheaves_{\Sigma}(\spheres) \rightleftarrows \presheaves_{\Sigma}(h \spheres) \colon p_{*},
\]
where $p_{*}$ is precomposition and $p^{*}$ is the left Kan extension. One can show that the two kinds of homotopy groups appearing in the work of Blanc-Dwyer-Goerss correspond to the homotopy groups of either $X$ or $p_{*} p^{*} X$, computed levelwise. This explains the two kinds of bigraded homotopy groups as an instance of the same phenomena. We also show that there is a natural fibre sequence involving $X$ and $p_{*} p^{*} X$, and that the long exact sequence of homotopy associated to that is the spiral sequence discussed above. 

To relate $\presheaves_{\Sigma}(\spheres)$ to spaces, we note that the restricted Yoneda embedding presents the $\infty$-category of pointed, connected spaces as a full subcategory of $\presheaves_{\Sigma}(\spheres)$. We show that the essential image of this embedding is the subcategory of those presheaves $X$ for which a certain naturally defined loop comparison map $X_{S^{1}} \rightarrow \Omega X$ is an equivalence. 

The relation of $\presheaves_{\Sigma}(\spheres)$ to $\Pi$-algebras is even more simple, as it is immediate from the definition that the category of the latter can be identified with the full subcategory spanned by discrete objects; that is, by presheaves valued in sets. Moreover, if $X \in \presheaves_{\Sigma}(\spheres)$ is represented by a pointed, connected space $Y$, then the homotopy $\Pi$-algebra $\pi_{*}Y$ can be described as the $0$-truncation $X_{\leq 0}$ in the $\infty$-category of presheaves. 

This suggests that if we view both as objects of $\presheaves_{\Sigma}(\spheres)$, then there is a whole sequence of objects interpolating between a pointed, connected space and its $\Pi$-algebra, namely the Postnikov tower in presheaves. We axiomatize what it means for a product-preserving presheaf to, informally, ``behave like an $n$-truncation of a Yoneda embedding of a pointed, connected space''. The conditions are simple: they merely state that $X$ has to take values in $n$-truncated spaces and that its loop comparison map is $(n-1)$-connective.  We call such presheaves \emph{potential $n$-stages}. 

The $\infty$-groupoids $\mathcal{M}_{n}(A)$ appearing in the statement of  \cref{thm:main_theorem} are given by full subgroupoids of $\presheaves_{\Sigma}(\spheres)$ spanned by potential $n$-stages and the connecting functors 
\[
\mathcal{M}_{n}(A) \rightarrow \mathcal{M}_{n-1}(A)
\]
are induced by Postnikov truncation. Using the Yoneda embedding, $\mathcal{M}_{\infty}(A)$ can be identified with the $\infty$-groupoid of realizations of $A$, while $\mathcal{M}_{0}(A)$ is the groupoid of $\Pi$-algebras isomorphic to $A$, which proves parts (1) and (3) of the main result. 

To understand the map $\mathcal{M}_{n}(A) \rightarrow \mathcal{M}_{n-1}(A)$ and its fibres, we develop a theory of Postnikov invariants in $\infty$-categories of the form $\presheaves_{\Sigma}(\inftyc)$, where $\inftyc$ is a small $\infty$-category satisfying some conditions, see \cref{assumption:assumption_on_c_to_have_good_presheaves}. These results form the heart of the current work; the main one being the classification of presheaves of $n$-types with a prescribed $(n-1)$-truncation and with a given module structure on homotopy groups. 

The $k$-invariants we describe can be identified with the ones appearing in the work of Blanc-Dwyer-Goerss; however, we obtain them in a different way. We begin with the fundamental observation that if $X$ is an $n$-type in spaces, then the map $X \rightarrow X_{\leq (n-1)}$ has fibres given by Eilenberg-MacLane spaces of degree $n$ and conversely any map with this property that has a source which is an $n$-type with $(n-1)$-truncation precisely $X_{\leq (n-1)}$. Working $\infty$-categorically, such maps can be classified by the Grothendieck construction, which associates to such a map a functor $X_{\leq (n-1)} \rightarrow \emspaces_{n}$ valued in the $\infty$-category of Eilenberg-MacLane spaces of degree $n$. 

We show how to adapt the Grothendieck construction to the setting of product-preserving presheaves and prove the needed classification by computing the homotopy types of the relevant classifying spaces. Note that in this approach objects tend to be brought into existence by their universal properties; their properties are only established down the line.  In particular, the technical difference construction appearing in \cite{realization_space_of_a_pi_algebra} is no longer needed, nor are the relevant connectivity calculations. 

A computationally significant step is that we show that the invariants of $n$-types discussed above can be identified not with homotopy classes inside $\presheaves_{\Sigma}(\spheres)$, but in the more simple $\infty$-category $\presheaves_{\Sigma}(h \spheres)$. These can be then identified with Andr\'{e}-Quillen cohomology of $\Pi$-algebras.

\subsection{Related work}

In \cite{lurie_hopkins_brauer_group}, Hopkins and Lurie apply techniques similar to ours to the problem of determining the Brauer group of Lubin-Tate spectra. In a similar direction, an account of Goerss-Hopkins theory based on product-preserving (pre)sheaves appears in the joint work of the VanKoughnett and the author \cite{pstrkagowski2018synthetic, pstrkagowski2022abstract}. In \cite{balderrama2021deformations}, Balderrama extends the methods of this work to cover a wide class of algebraic theories. 

\subsection{Notation and terminology}

We use the language of $\infty$-categories or quasicategories, as developed by Joyal and Lurie; standard references are  \cite{lurie_higher_topos_theory} and \cite{joyal2002quasi}. We identify ordinary categories with $\infty$-categories with discrete mapping spaces through the nerve construction. Usually, if an $\infty$-category is an ordinary category, we will be explicit about it.

An \emph{\inftygroupoid} is an $\infty$-category whose morphisms are all equivalences. Following Lurie, we usually refer to $\infty$-groupoids 
 as \emph{spaces} and denote their $\infty$-category by $\spaces$. If $\inftyc$ is an $\infty$-category, then we denote its \emph{underlying \inftygroupoid} by $\inftyc^{\cong}$. This is a subcategory with the same objects but only the equivalences. We denote the \emph{homotopy category} of $\inftyc$ by $h\inftyc$. 
 
We use the term \emph{moduli} somewhat informally to refer to an object which classifies structures of specified type. A common case is that of a \emph{moduli space} by which we mean the $\infty$-groupoid spanned by these objects and their equivalences. 

If $X$ is a space and $n \geq 0$, then we say that it is \emph{$n$-truncated} or an \emph{$n$-type} if all of its homotopy groups $\pi_{i}(X, x)$ at all basepoints $x \in X$ vanish for $i > n$. The inclusion $\tau _{\leq n} \spaces \hookrightarrow \spaces$ of the full subcategory spanned by $n$-types admits a left adjoint which we call $n$-truncation and denote by $(-)_{\leq n}$. 

More generally, if $\inftyc$ is any $\infty$-category and $c \in \inftyc$ is an object, then we say $c$ is an \emph{$n$-type} if for all $c^{\prime} \in \inftyc$ the space $\map(c^{\prime}, c)$ is an $n$-type. We say an object is \emph{discrete} if it is a $0$-type. If the inclusion $\tau _{\leq n}\inftyc \hookrightarrow \inftyc$ of the full subcategory spanned by $n$-types admits a left adjoint, then we also call it $n$-truncation and denote it by $(-) _{\leq n}$. 

We say that a map $X \rightarrow Y$ is an \emph{$n$-equivalence} whenever $X_{\leq n} \rightarrow Y_{\leq n}$ is an equivalence. We denote the subspace of $n$-equivalences by $\map_{n-\mathrm{eq}}(X, Y)$; it is a union of path components of $\map(X, Y)$. 

We use the term \emph{$\infty$-group} to refer to homotopy coherent-groups; that is, to group objects in the sense of \cite[7.2.1.1]{lurie_higher_topos_theory}. An \emph{$\infty$-cogroup} in an $\infty$-group in the opposite $\infty$-category. We reserve the word \emph{group} for classical, homotopically discrete groups. 

By the recognition principle of Boardman-Vogt and May \cite{boardman2006homotopy, may2006geometry}, the $\infty$-category $\mathrm{Grp}(\spaces)$ of $\infty$-groups in spaces is equivalent to the $\infty$-category $\spaces_{*}^{cn}$ of pointed, connected spaces. A pair of inverse equivalences is given by the loop space functor $\Omega \colon \spaces_{*}^{cn} \rightarrow \mathrm{Grp}(\spaces)$ and the classifying space functor $\B \colon \mathrm{Grp}(\spaces) \rightarrow \spaces_{*}^{cn}$. We use these equivalence freely; for a modern account see \cite[5.2.6.10]{higher_algebra}.

If $\inftyc$ is an $\infty$-category which admits finite coproducts, then we say a presheaf $X \colon \inftyc^{op} \rightarrow \spaces$ is \emph{product-preserving} or \emph{spherical}\footnote{We will mainly use the word ``spherical'' rather than ``product-preserving'' for two distinct reasons: 
\begin{enumerate}
\item brevity: \emph{product-preserving} is two words, but \emph{spherical} is only one, 
\item clarity: \emph{product-preserving} means to take products to products, which is a property of covariant functors. As to turn a contravariant functor into a covariant one is a matter of convention, this term is potentially ambiguous when applied to contravariant functors. 
\end{enumerate}
Our specific choice of the word \emph{spherical} is inspired by the fact that our applications are related to presheaves indexed by the $\infty$-category of finite wedges of spheres.} if it takes finite coproducts in $\inftyc$ to products of spaces. We denote the full subcategory of $\presheaves(\inftyc) := \Fun(\inftyc^{op}, \spaces)$ spanned by spherical presheaves by $\presheaves_{\Sigma}(\inftyc)$.
 
\subsection{Acknowledgments}

I would like to thank Neil Strickland, at whose suggestion I first became interested in moduli problems in algebraic topology. I would like to thank Aaron Mazel-Gee for explaining to me the importance of the non-abelian derived category. I would like to thank the MathOverflow homotopy theory chat, where I learned a lot of $\infty$-category theory. I would like to thank Paul Goerss for helpful discussions and for reading an earlier version of the current work. I would like to thank the anonymous referee for many helpful suggestions which greatly improved the quality of this paper. 

\section{Classification of spaces}
\label{section:classification_of_spaces}

In this preparatory chapter, we review the theory of Postnikov invariants of spaces. While this subject is classical, our approach is novel and based on the Grothendieck construction for $\infty$-categories. While somewhat technical, this approach has the advantage of generalizing well to the case of presheaves which we take up later in \S\ref{section:classification_of_presheaves}. 

\subsection{Maps between Eilenberg-MacLane spaces} 
\label{subsection:maps_between_eilenberg_maclane_spaces}

To identify Postnikov invariants with appropriate cohomology classes, we need some elementary results on maps between Eilenberg-MacLane spaces, which we prove in this section. Everything here is folklore, but we couldn't find a suitable reference. Throughout, we assume that $n \geq 1$. 

Associated to an abelian group $A$ we have the itarated classifying space $\B^{n}A$, which is canonically an $\mathbf{E}_{\infty}$-space of homotopy type $K(A, n)$. This construction is functorial, so that given two abelian groups $A_{1}, A_{2}$, we have an induced morphism
\begin{equation}
\label{equation:bn_on_morphisms}
\B^{n}(-)  \colon \Hom_{\abeliangroups}(A_{1}, A_{2}) \rightarrow \map_{\spaces_{*}}(\B^{n}A_{1}, \B^{n} A_{2})
\end{equation}
into the mapping space of base-point preserving maps. This is an equivalence, as $\pi_{0}$ of the target can be identified with 
\[
\mathrm{H}^{n}(\B^{n}A_{1}, A_{2}) \simeq \Hom_{\abeliangroups}(A_{1}, A_{2})
\]
by the universal coefficient theorem, and the other homotopy groups vanish. An explicit inverse to (\ref{equation:bn_on_morphisms}) is induced by taking the $n$-th homotopy group.

\begin{notation}
Throughout the current work, by an Eilenberg-MacLane space we mean a space of homotopy type $K(A, n)$ for some \emph{abelian} group $A$. The latter condition is automatic when $n \geq 2$. Usually, the degree is fixed and we write $\emspaces_{n}$ for the corresponding full subcategory of spaces. 
\end{notation}
Note that for any $n \geq 1$, $\emspaces_{n} \subseteq \spaces$ is closed under the cartesian product, so that we can consider the $\infty$-categories $\Alg_{\mathbf{E}_{\infty}}(\emspaces_{n})$ of $\mathbf{E}_{\infty}$-algebras and $(\emspaces_{n})_{*}$ of pointed objects (that is, $\mathbf{E}_{0}$-algebras). Note that since we work in the $\infty$-categorical setting, to give a pointed map between pointed objects is additional data, rather than a condition. 

\begin{prop}
\label{prop:projectivity_of_em_spaces}
In the commutative triangle 
\[
\begin{tikzcd}
	& \abeliangroups \\
	\Alg_{\mathbf{E}_{\infty}}(\emspaces_{n}) && (\emspaces_{n})_{*}
	\arrow["{\mathrm{B}^{n}}", from=1-2, to=2-3]
	\arrow[from=2-1, to=2-3]
	\arrow["{\mathrm{B}^{n}}"', from=1-2, to=2-1]
\end{tikzcd},
\]
where the horizontal arrow is the forgetful functor, each arrow is an equivalence of $\infty$-categories. 
\end{prop}

\begin{proof}
The left vertical arrow is an equivalence of $\infty$-categories with inverse given by $\Omega^{n}$, so it is enough to verify that the right vertical arrow is also an equivalence of $\infty$-categories. To see that it is essentially surjective, observe that if $X$ is a pointed Eilenberg-MacLane space of degree $n$, then by definition there exists an equivalence $X \simeq \mathrm{B}^{n}(\pi_{n}X)$. To see that it is fully faithful, note that (\ref{equation:bn_on_morphisms}) is an equivalence as a consequence of the universal coefficient theorem.
\end{proof}

\begin{rem}
Note that \cref{prop:projectivity_of_em_spaces} implies that for any pointed Eilenberg-MacLane space $X \in (\emspaces_{n})_{*}$, the $\infty$-categorical fibre 
\[
\Alg_{\mathbf{E}_{\infty}}(\emspaces_{n}) \times_{(\emspaces_{n})_{*}} \{ X \} 
\]
is a contractible $\infty$-groupoid. Informally, this can be interpreted as saying that any pointed Eilenberg-MacLane space admits a unique $\mathbf{E}_{\infty}$-structure, functorial in maps of Eilenberg-MacLane of the same degree.
 \end{rem}

It will be useful to also describe morphisms in $\emspaces_{n}$ itself, where we do not have chosen basepoints. To do so, observe that we can compose $\B^{n}(-)$ of (\ref{equation:bn_on_morphisms}) with the forgetful functor into (non-pointed) spaces. This composite is not an equivalence, but since $\B^{n}A_{2}$ is an $\mathbf{E}_{\infty}$-space, it acts on itself by left multiplication. This action determines a homomorphism of $\infty$-groups
\begin{equation}
\label{equation:left_multiplication_action}
m  \colon \B^{n}A_{2} \rightarrow \map_{\spaces}(\B^{n}A_{2}, \B^{n}A_{2}).
\end{equation}
The two maps of (\ref{equation:bn_on_morphisms}) and (\ref{equation:left_multiplication_action}) together determine a morphism of spaces
\[
\phi  \colon \Hom_{\abeliangroups}(A, B) \times \B^{n}A_{2} \rightarrow \map_{\spaces}(\B^{n}A_{1}, \B^{n} A_{2})
\]
as the composite 
\[\begin{tikzcd}[column sep=small]
	 {\Hom(A_{1}, A_{2}) \times \B^{n}A_{1} } & {} & {\map_{\spaces}(\B^{n}A_{1}, \B^{n}A_{2}) \times \map_{\spaces}(\B^{n}A_{2}, \B^{n}A_{2})} \\
	& {map_{\spaces}(\B^{n}A_{1}, \B^{n}A_{2})}
	\arrow["{\B^{n}(-) \times m}", from=1-1, to=1-3]
	\arrow["\phi"', from=1-1, to=2-2]
	\arrow["- \circ -", from=1-3, to=2-2],
\end{tikzcd}\]
where the right arrow is the composition of morphisms in the $\infty$-category of spaces. 

\begin{lemma}
\label{lemma:maps_between_eilenberg_maclane_spaces}
For any $n \geq 1$ and abelian groups $A_{1}, A_{2}$, the morphism 
\[
\phi  \colon \Hom_{\abeliangroups}(A, B) \times \B^{n}A_{2} \rightarrow \map_{\spaces}(\B^{n}A_{1}, \B^{n} A_{2})
\]
is an equivalence of spaces. 
\end{lemma}

\begin{proof}
By construction of $\phi$, we have a commutative diagram of spaces
\[
\begin{tikzcd}
	{\Hom_{\abeliangroups}(A, B) \times \B^{n}A_{2} } && {\map_{\spaces}(\B^{n}A_{1}, \B^{n} A_{2})} \\
	& {\B^{n}A_{2}}
	\arrow["{p_{2}}"', from=1-1, to=2-2]
	\arrow["ev", from=1-3, to=2-2]
	\arrow["\phi", from=1-1, to=1-3]
\end{tikzcd},
\]
where $p_{2}$ is the projection onto the second coordinate and $ev$ is evaluation at the basepoint of $\B^{n}A_{1}$. 

Since $\B^{n}A_{2}$ is connected, to verify that $\phi$ is an equivalence it is enough to verify that the induced map of fibres of $p_{2}$ and $ev$ over the basepoint of $\B^{n}A_{2}$ is an equivalence. However, this map between the fibres can be identified with (\ref{equation:bn_on_morphisms}), which we've already seen is an equivalence.
\end{proof}

\begin{observation}
\label{observation:phi_restricts_to_equivalences}
By construction, $\phi$ restricts to an equivalence
\[
\Hom^{\simeq}_{\abeliangroups}(A, B) \times \B^{n}A_{2} \rightarrow \map^{\simeq}_{\spaces}(\B^{n}A_{1}, \B^{n} A_{2}),
\]
where the superscript $\simeq$ denotes the subspaces of equivalences. Note that both sides are empty unless $A$ and $B$ are isomorphic, as expected.
\end{observation}

\begin{observation}
\label{observation:self_equivalences_of_an_em_space_as_a_group}
In the particular case of $A_{1} = A_{2} = A$, \cref{observation:phi_restricts_to_equivalences} yields an equivalence of spaces $\Aut_{\abeliangroups}(A) \times \B^{n}A \simeq \Aut_{\spaces}(\B^{n}A)$. Retracing the definition of $\phi$, we see that it is in fact an equivalence \emph{of $\infty$-groups}
\[
\Aut_{\abeliangroups}(A) \ltimes \B^{n}A \simeq \Aut_{\spaces}(\B^{n}A),
\]
where the left hand side is the semidirect product induced by the action of $\Aut_{\abeliangroups}(A)$ on $\B^{n}A$. 
\end{observation}

\subsection{The Grothendieck construction and Postnikov invariants of spaces}
\label{subsection:postnikov_invariants_of_spaces}

In this section we review the theory of Postnikov invariants of spaces. This is classical and our main goal is to develop the necessary language which generalizes well to the case of presheaves. 

Recall that the inclusion $\tau_{\leq n} \spaces \hookrightarrow \spaces$ of the full subcategory spanned by $n$-types into the $\infty$-category of spaces admits a left adjoint, which we denote by $(-)_{\leq n}$. Concretely, if $X$ is a space, then $X_{\leq n}$ can be constructed by attaching, at each path component of $X$ separately, cells in dimensions $(n+1)$ and higher to kill all homotopy groups above dimension $n$. 

Using the unit maps of the adjunctions $\spaces \rightleftarrows \tau_{\leq n} \spaces$ for varying $n$, to each space $X$ one associates its Postnikov tower
\begin{center}
$\ldots \rightarrow X_{\leq 3} \rightarrow X_{\leq 2} \rightarrow X_{\leq 1} \rightarrow X_{\leq 0}$.
\end{center}
This tower has the properties that each $X_{\leq n}$ is an $n$-type and that $X_{\leq m} \rightarrow X_{\leq n}$ for $m \geq n$ is an $n$-equivalence, that is, it is an equivalence after applying the functor $(-)_{\leq n}$.

\begin{rem}
One can show that the obvious functor from spaces
\[
\spaces \rightarrow \mathrm{Post}(\spaces)
\]
into the $\infty$-category of towers satisfying the two conditions above, is an equivalence of $\infty$-categories \cite[7.2.1.10]{lurie_higher_topos_theory}. The inverse equivalence is given by forming the limit.
\end{rem}

The Postnikov tower presents each space as being constructed in steps, each step taken one level up the tower. To describe the necessary data to take such a step, that is, to describe $X_{\leq n}$ given $X_{\leq (n-1)}$, is the classical subject of Postnikov invariants. Our approach will be based on the Grothendieck construction, so we begin by reviewing the latter. 

If $p \colon Y \rightarrow X$ is a map of spaces, then it is a classical result that we have an associated functor from the fundamental groupoid of $X$ into the homotopy category of spaces. The functor takes a point $x \in X$ to the fibre of $p$ over $x$, and functoriality is provided by lifting of paths in $X$.

Using the langauge of $\infty$-categories, one can be more precise and show that from such a map one can construct an honest functor $\widetilde{p} \colon X \rightarrow \spaces$ of $\infty$-categories, the classical construction being recovered by passing to homotopy categories. Conversely, the colimit of any such $\widetilde{p}$ is canonically a space over $X$, since the colimit of a terminal functor is $X$ itself. A fundamental result is that these two constructions are inverse to each other:

\begin{theorem}[Lurie]
\label{thm:grothendieck_construction_for_spaces}
The functor $\varinjlim \colon \fun(X, \spaces) \rightarrow \spaces _{/X}$ is an equivalence of $\infty$-categories. 
\end{theorem}

\begin{proof}
This is \cite[2.2.1.2]{lurie_higher_topos_theory} in the special case when the given equivalence of simplicial categories is the identity and when the simplicial set in question is a Kan complex.
\end{proof}

\begin{rem}
The general form of the Grothendieck construction gives for any small $\infty$-category an equivalence $\fun(\inftyc, \inftycatcat) \simeq \mathrm{CoCart}(\inftyc)$ between functors into the \inftycat $\inftycatcat$ of small $\infty$-categories and cocartesian fibrations over $\inftyc$, see \cite[\S 3.1]{lurie_higher_topos_theory}. We will not need this construction in the current work. 
\end{rem}

\begin{observation}[Naturality of the Grothendieck construction]
\label{observation:naturality_of_the_grothendieck_construction}
By \cite[Appendix A]{gepner2015lax}, the equivalence of \cref{thm:grothendieck_construction_for_spaces} commutes with base change in the following sense: given a map of spaces $f \colon X^{\prime} \rightarrow X$ and a functor $p \colon X \rightarrow \spaces$, the corresponding diagram of spaces 
\[
\begin{tikzcd}
	{\varinjlim (p \circ f)} & {\varinjlim p} \\
	{X'} & X
	\arrow[from=2-1, to=2-2]
	\arrow[from=1-2, to=2-2]
	\arrow[from=1-1, to=2-1]
	\arrow[from=1-1, to=1-2]
\end{tikzcd}\]
is cartesian. In particular, in the special case of an inclusion of a point $\{ x \} \hookrightarrow X$, we recover the description of the functor $\widetilde{p} \colon X \rightarrow \spaces$ associated to a map $p \colon Y \rightarrow X$ as informally given by 
\[
\widetilde{p}(x) := Y \times_{X} \{ x \},
\]
the fibre over $x$. 
\end{observation}

\begin{rem}
The Grothendieck construction can be informally interpreted as saying that any map $Y \rightarrow X$ of spaces is a ``bundle'' in the sense that the fibres are functorial in the base. 
\end{rem}

\begin{observation}[Homotopy orbits of an action of an $\infty$-group]
\label{observation:homotopy_quotient_by_an_infty_group}
Associated to an $\infty$-group object $G$ in spaces we have a pointed space $\B G$, its classifying space, which we can consider as an $\infty$-groupoid. A functor $\B G \rightarrow \spaces$ can be identified with a choice of a space $X$ (the image of the basepoint) together with a homotopy coherent action of $G$. 

In this case, Grothendieck construction of \cref{thm:grothendieck_construction_for_spaces} can be interpreted as saying that the homotopy orbits construction
\[
X \mapsto X_{h G} := \varinjlim_{\B G} X
\]
provides an equivalence $\Fun(\B G, \spaces) \simeq \spaces_{/ \B G}$ between the $\infty$-category of spaces with a $G$-action and spaces over $\B G \simeq (\mathrm{pt})_{h G}$. Using \cref{observation:naturality_of_the_grothendieck_construction}, we see that associated to any $X$ with a $G$-action we have a fibre sequence of spaces of the form 
\[
X \rightarrow X_{hG} \rightarrow \B G. 
\]
In turn, any fibre sequence of the form 
\[
Y \rightarrow Z \rightarrow \B G 
\]
determines an action of $G$ on $Y$ and an equivalence $Z \simeq Y_{h G}$. 
\end{observation}

Let us move to the subject of Postnikov invariants. Let $X$ be a space, assumed to be connected, so that we have a (non-canonical) equivalence $X_{\leq 1} \simeq \B(\pi_{1}X)$, which we can interpret as a starting point of an iterative construction of $X$.

If $n > 1$, we can consider the map $X_{\leq n} \rightarrow X_{\leq (n-1)}$. It follows immediately from the long exact sequence of homotopy groups that all of the fibres are necessarily Eilenberg-MacLane spaces of type $K(\pi_{n}X, n)$. The latter can be naturally assembled into an $\infty$-groupoid.

\begin{notation}
\label{notation:moduli_of_em_spaces_of_speciifed_type}
We write
\[
\mathcal{M}(K(\pi_{n}X, n))
\]
for the full subcategory of $\spaces^{\cong}$ spanned by spaces of homotopy type $K(\pi_{n}X, n)$. We refer to this $\infty$-groupoid as the \emph{moduli of spaces of type $K(\pi_{n}, n)$}.
\end{notation}
As observed above, the fibres of $X_{\leq n} \rightarrow X_{\leq (n-1)}$ are objects of $\mathcal{M}(K(\pi_{n}X, n))$. It follows that the classifying functor $X_{\leq n-1} \rightarrow \spaces$ has a unique factorization
\[\begin{tikzcd}
	{X_{\leq (n-1)}} && \spaces \\
	& \mathcal{M}(K(\pi_{n}X, n))
	\arrow["{h_{n}}", from=1-1, to=2-2]
	\arrow[from=1-1, to=1-3]
	\arrow[hookrightarrow, from=2-2, to=1-3]
\end{tikzcd}.
\]
The functor $h_{n} \colon X_{\leq (n-1)} \rightarrow \mathcal{M}(K(\pi_{n}X, n))$ can be thought of as a variation on the Postnikov invariant, as it determines $X_{\leq n}$ as a space over $X_{\leq (n-1)}$. 

The moduli space of \cref{notation:moduli_of_em_spaces_of_speciifed_type} can be described explicitly. Indeed, notice that by definition all objects of $\mathcal{M}(K(\pi_{n}X, n))$ are equivalent to the iterated classifying space $\B^{n}(\pi_{n}X)$. Thus, the inclusion of the full subcategory spanned by $\B^{n}(\pi_{n}X)$, which we can identify with 
\[
\B\Aut_{\spaces}(\B^{n}(\pi_{n}X)) \hookrightarrow \mathcal{M}(K(\pi_{n}X, n)),
\]
is an equivalence of spaces. This, together with the Grothendieck construction, implies the following: 

\begin{prop}
\label{prop:classification_of_n_types_over_a_given_nminusone_type}
Given a map 
\[
h_{n}  \colon X_{\leq (n-1)} \rightarrow \B\Aut_{\spaces}(\B^{n}\pi_{n}X),
\]
taking the colimit of the composite 
\[
X_{\leq n-1} \rightarrow \B\Aut_{\spaces}(\B^{n}\pi_{n}X) \hookrightarrow \mathcal{M}(K(\pi_{n}X, n)) \hookrightarrow \spaces
\]
provides an equivalence
\[
\Fun(X_{\leq n-1}, \B\Aut_{\spaces}(\B^{n}\pi_{n}X)) \simeq (\spaces_{/X_{\leq n-1}}^{K(\pi_{n}X, n)})^{\cong}
\]
between the functor $\infty$-category and the $\infty$-groupoid of spaces over $X_{\leq n-1}$ with fibres of type $K(\pi_{n}X, n)$. 
\end{prop}

\begin{cor}
The homotopy type of $X_{\leq n}$ as a space over $X_{\leq n-1}$ is uniquely determined by a homotopy class of maps $h_{n}  \colon X_{\leq (n-1)} \rightarrow \B\Aut_{\spaces}(\B^{n}\pi_{n}X)$.  
\end{cor}

\begin{rem}
Note that we have determined the target of the map in  \cref{prop:classification_of_n_types_over_a_given_nminusone_type} explicitly in 
\cref{observation:self_equivalences_of_an_em_space_as_a_group}, as we have an equivalence
\[
\B\Aut_{\spaces}(\B^{n}(\pi_{n}X)) \simeq \B(\Aut_{\abeliangroups}(\pi_{n}X) \ltimes \B^{n}(\pi_{n}X)).
\]
\end{rem}
The map $h_{n}$ of  \cref{prop:classification_of_n_types_over_a_given_nminusone_type} can be thought of as a $k$-invariant which classifies $X_{\leq n}$ as a space over $X_{\leq n-1}$. 

In practice, one is more interested in classifying spaces $n$-truncated spaces on their own; that is, without the additional data of a fixed equivalence between their $(n-1)$-truncation and a fixed space. We now show how to deduce such a classification from \cref{prop:classification_of_n_types_over_a_given_nminusone_type}. To organize ideas, we make the following definition. 

\begin{defin}
\label{defin:space_of_type}
Let us fix $n \geq 2$, a discrete abelian group $A$, and a connected $(n-1)$-type $Y$. We say that an that an $n$-truncated space $X \in \spaces$ is of \emph{type} $Y + (A, n)$ if 
\begin{enumerate}
    \item there exists an equivalence $X_{\leq (n-1)} \simeq Y$ of spaces and 
    \item an isomorphism $\pi_{n}X \simeq A$ of abelian groups.
\end{enumerate} 
The \emph{moduli space} $\mathcal{M}(Y + (A, n))$ is the full subgroupoid of $\spaces^{\cong}$ spanned by spaces of type $Y + (A, n)$.
\end{defin}

By definition, the path components of $\mathcal{M}(Y + (A, n))$ correspond to homotopy types of spaces of type $Y + (A, n)$. However, even when one is only interested in path components, it is is often easier to describe the whole moduli space itself, as it is tends to be better behaved. 

The following gives an alternative description of $\mathcal{M}(Y + (A, n))$ based on the Grothendieck construction:

\begin{lemma}
\label{lem:postnikov_determination}
Let $\mathcal{M}(Y + (A, n) \looparrowright Y)$ denote be the full subgroupoid  of $\Fun(\Delta^{1}, \spaces^{\cong})$ spanned by those arrows $p  \colon X \rightarrow Y^{\prime}$ such that 
\begin{enumerate}
    \item $p$ is an $(n-1)$-equivalence,
    \item $X$ is of type $Y + (A, n)$ and 
    \item $Y^{\prime}$ equivalent to $Y$
\end{enumerate}
Then, the forgetful mapping
\[
\mathcal{M}(Y + (A, n) \looparrowright Y) \rightarrow \mathcal{M}(Y + (A, n))
\]
given by 
\[
(p  \colon X \rightarrow Y^{\prime}) \mapsto X
\]
is an equivalence of spaces. 
\end{lemma}

\begin{proof}
It is enough to show that all of the fibres are contractible, so let $X \in \mathcal{M}(Y + (A, n))$. The fibre over $X$ is the $\infty$-groupoid of $(n-1)$-types $Y^{\prime}$ equipped with an $(n-1)$-equivalence $X \rightarrow Y^{\prime}$. Using the adjunction $\spaces \rightleftarrows \tau_{\leq n-1} \spaces$ , we see that this is equivalent to the $\infty$-groupoid of $(n-1)$-types $Y^{\prime}$ together with a fixed equivalence $X _{\leq (n-1)} \simeq Y^{\prime}$, which is contractible.
\end{proof}

The above result is useful, as $\mathcal{M}(Y + (A, n) \looparrowright Y)$ participates in interesting fibre sequences. Above, we've shown that the map induced by forgetting the codomain is an equivalence. However, we can instead forget the domain, which yields a fibre sequence 
\begin{equation}
\label{equation:fibre_describing_y_plus_an}
\mathcal{M}((Y+(A,n)_{-/Y}) \rightarrow \mathcal{M}(Y + (A, n) \looparrowright Y) \rightarrow \mathcal{M}(Y),
\end{equation}
where $\mathcal{M}(Y)$ is the $\infty$-groupoid of spaces equivalent to $Y$ with basepoint given by $Y$ itself, and $\mathcal{M}((Y+(A,n)_{-/Y})$ is the $\infty$-groupoid of spaces $X$ of type $Y + (A, n)$ equipped with an $(n-1)$-equivalence $X \rightarrow Y$. By a combination of \cref{lem:postnikov_determination}, \cref{prop:classification_of_n_types_over_a_given_nminusone_type} and \cref{observation:self_equivalences_of_an_em_space_as_a_group}, we have equivalences
\[
\mathcal{M}((Y+(A,n)_{-/Y}) \simeq \map_{\spaces}(Y, \B\Aut(\B^{n}A)) \simeq \map_{\spaces}(Y, \B(\Aut_{\abeliangroups}(A) \ltimes \B^{n}A)).
\]
Combining this with (\ref{equation:fibre_describing_y_plus_an}) we obtain a fibre sequence
\begin{equation}
\label{equation:main_result_fibre_sequence_in_classification_of_n_types_in_spaces}
\map_{\spaces}(Y, \B(\Aut_{\abeliangroups}(A) \ltimes \B^{n}A)) \rightarrow \mathcal{M}(Y + (A, n)) \rightarrow \mathcal{M}(Y). 
\end{equation}
Notice that $\mathcal{M}(Y) \simeq \B(\Aut_{\spaces}(Y))$ and so by passing to path components with the above fibre sequence we get the following result:

\begin{prop}
\label{prop:postnikov_invariant_determining_homotopy_type_of_y_plus_an}
The homotopy type of a space of type $Y + (A, n)$ is uniquely determined by a homotopy class of maps
\[
k_{n}  \colon Y \rightarrow \B(\Aut_{\abeliangroups}(A) \ltimes \B^{n}A)
\]
well-defined up to the action of $\pi_{0}\Aut_{\spaces}(Y)$. 
\end{prop}

\begin{proof}
As explained in \cref{observation:homotopy_quotient_by_an_infty_group}, the data of a fibre sequence (\ref{equation:main_result_fibre_sequence_in_classification_of_n_types_in_spaces}) is equivalent to an equivalence 
\begin{equation}
\label{equation:moduli_space_of_spaces_of_type_yplusan_as_homotopy_orbits}
\mathcal{M}(Y + (A, n)) \simeq \map_{\spaces}(Y, \B(\Aut_{\abeliangroups}(A) \ltimes \B^{n}A))_{h \\Aut_{\spaces}(Y)},
\end{equation}
where the right hand side is given by homotopy orbits. As $\pi_{0} \colon \spaces \rightarrow \euscr{S}\mathrm{et}$ is a left adjoint, it commutes with colimits and we deduce that 
\[
\pi_{0} \mathcal{M}(Y + (A, n)) \simeq \faktor{\pi_{0} \map_{\spaces}(Y, \B(\Aut_{\abeliangroups}(A) \ltimes \B^{n}A))}{\pi_{0} \Aut_{\spaces}(Y)},
\]
where the right hand side is quotient of a set by a group action. 
\end{proof}

\begin{rem}[The moduli space as homotopy orbits]
The equivalence (\ref{equation:moduli_space_of_spaces_of_type_yplusan_as_homotopy_orbits}) recovers a classical result due to Dwyer, Kan and Smith; see \cite{dwyer1989towers}.
\end{rem}

\begin{rem}
\label{rem:postnikov_invariant_as_a_cohomology_class}
This homotopy class of maps of \cref{prop:postnikov_invariant_determining_homotopy_type_of_y_plus_an} is also a variation on Postnikov invariants. To recover the latter in their classical form (as explained, for example, in \cite[\S 4]{hatcher_algebraic_topology}) from the given homotopy class of maps, notice that when $Y$ is simply-connected, the map $k_{n}$ uniquely lifts along the universal cover of the target, which can be identified with 
\[
\B^{n+1} A \simeq \B(\B^{n} A) \rightarrow \B(\Aut_{\abeliangroups}(A) \ltimes \B^{n}A)
\]
To see this, observe that the source is simply-connected, and that the map is an isomorphism on higher homotopy groups as can be verified after applying $\Omega$. The space $\B^{n+1} A$ is an Eilenberg-MacLane space of type $K(A, n+1)$, so that the unique lift of $k_{n}$ can be identified with a cohomology class in $\mathrm{H}^{n+1}(Y, A)$.
\end{rem}

\subsection{Action of the fundamental group} 
\label{subsection:action_of_the_fundamental_group}

In this section, we refine the classification of $n$-types of \cref{prop:postnikov_invariant_determining_homotopy_type_of_y_plus_an} to yields results about spaces with fundamental group isomorphic to a fixed group $G$ and with higher homotopy groups prescribed \emph{as $G$-modules}. 

\begin{defin}
\label{definition:module_over_a_space}
If $X$ is a space, then a \emph{module} over $X$ is a discrete abelian group object in the $\infty$-category $\spaces_{/X}$ of spaces over $X$.
\end{defin}

\begin{rem}
A space $Y \rightarrow X$ is discrete over $X$ if it has discrete fibres. In this context, an abelian group structure is determined by a map 
\[
Y \times_{X} Y \rightarrow Y 
\]
which is commutative, associative and unital. We usually abusively denote a module as $Y \rightarrow X$, with the abelian group structure being implicit. 
\end{rem}

\begin{observation}[Modules and local coefficient systems]
\label{observation:modules_over_space_modules_over_one_truncation}
By the Grothendieck construction, a module $Y \rightarrow X$ is uniquely determined by a functor $X \rightarrow \abeliangroups$ into the category of abelian groups. Because $\abeliangroups$ is an ordinary category, any such functor factors uniquely through $X_{\leq 1}$. In particular, the canonical map $X \rightarrow X_{\leq 1}$ induces an equivalence between $X$-modules and $X_{\leq 1}$-modules. 

Note that since $X_{\leq 1}$ can be identified with the fundamental groupoid of $X$, modules in the sense of \cref{definition:module_over_a_space} are equivalent to the classical notion of a local coefficient system. We work with modules rather than local coefficient systems as they generalize more easily to the case of presheaves. 
\end{observation}

\begin{rem}
If $X$ is connected with a chosen basepoint $x \in X$, then we have a canonical equivalence $X_{\leq 1} \simeq \B(\pi_{1}(X, x))$. Thus, in this case the data of module over $X$ is the same as that of module over the group ring $\mathbb{Z}[\pi_{1}(X, x)]$, motivating our terminology.

Note that in the particular case when $X \simeq \B G$ for $G$ a group, a $\B G$-module in the sense of \cref{definition:module_over_a_space} is the same data as that of a $G$-module; that is, of an abelian group with a $G$-action. 
\end{rem}

The standard way to produce local coefficient systems is to begin with a map $X' \rightarrow X$ of spaces, which by the Grothendieck construction can be thought of as encoding a functor $X \rightarrow \spaces$. One can then compose the latter with a functor from spaces into abelian groups to obtain a module. 

Recall that a space $A$ is said to be simple if it is connected and there exists (equivalently, for any) a basepoint $a \in A$ such that $\pi_{1}(A, a)$ acts trivially on $\pi_{n}(A, a)$ for $n \geq 1$. These are exactly the spaces for which homotopy groups are canonically defined without choosing a basepoint\footnote{Indeed, suppose that $A$ is simple and let $a, a' \in A$ be two points. A choice of a path between these points determines an isomorphism $\pi_{n}(A, a) \simeq \pi_{n}(A, a')$, and the isomorphisms determines by two different paths differ by the action of $\pi_{1}(A, a)$ on $\pi_{n}(A, a)$. As this action is trivial by assumption, this isomorphism is canonical.}.

\begin{defin}
\label{defin:relative_homotopy_groups}
Suppose that $p \colon Y \rightarrow X$ be a map of spaces all of whose fibres are simple. For any $n \geq 2$, the \emph{relative homotopy groups} $\pi_{n}(Y \rightarrow X)$ is the module over $X$ corresponding to the local coefficient system 
\[
(x \in X) \mapsto \pi_{n}(F_{x}),
\]
where $F_{x} := Y \times_{X} \{ x \}$. 
\end{defin}
Note that $\pi_{n}(F_{x})$ appearing in \cref{defin:relative_homotopy_groups} is well-defined without a choice of a basepoint by the simplicity assumption. 

\begin{example}[Absolute homotopy groups as relative homotopy groups]
\label{exmp:absolute_homotopy_groups}
If $X$ is a space, then for any $n \geq 2$, the underlying abelian group of the $X_{\leq 1}$-module $\pi_{n}(X \rightarrow X_{\leq 1})$ can be identified with $\pi_{n} X$. These modules encode the action of the fundamental group on the higher homotopy groups.
\end{example}

In this section, we refine the results of \S\ref{subsection:postnikov_invariants_of_spaces} by considering spaces with prescribed homotopy groups together with the action of the fundamental group. In more detail, our is to be describe the $\infty$-groupoid of spaces of the following form: 

\begin{defin}
\label{definition:moduli_space_of_spaces_of_type_y_gplus_mn}
Suppose that $n \geq 2$, $G$ is a group, $M$ is a $G$-module, and $Y$ is a connected $(n-1)$-type such that there exists an equivalence $Y_{\leq 1} \simeq \B G$. We say that an $n$-type $X$ is \emph{of type} $Y +_{G} (M ,n)$ if 
\begin{enumerate}
    \item there exists an $(n-1)$-equivalence $X \rightarrow Y$ with the property that
    \item there exists an equivalence $\phi \colon \B G \simeq Y_{\leq 1}$ such that the $G$-modules $M$ and $\phi^{*} \pi_{n}(X \rightarrow Y)$ are isomorphic. 
\end{enumerate}
The moduli space
\[
\mathcal{M}(Y +_{G} (M, n))
\]
is the full subgroupoid of $\spaces^{\cong}$ spanned by spaces of type $Y +_{G} (M, n)$.
\end{defin}
Our main result of this section, see \cref{thm:classification_of_n_types_fixed_module} below, is an explicit description of the moduli space of \cref{definition:moduli_space_of_spaces_of_type_y_gplus_mn} in terms of $Y, G$ and $M$. To prove it, we need an appropriate notion of an Eilenberg-MacLane space, given by the following: 

\begin{defin}
\label{defin:eilenberg_maclane_space}
Let $G$ be a group, $M$ a $G$-module and $n \geq 1$. The \emph{Eilenberg-MacLane space} of type $(G, M, n+1)$, denoted $\B_{G}(M, n+1)$, is defined using the cartesian diagram of spaces

\begin{center}
	\begin{tikzpicture}
		\node (TL) at (0, 1.5) {$ \B_{G}(M, n+1)$};
		\node (BL) at (0, 0) {$ \B G  $};
		\node (TR) at (3, 1.5) {$ \emspaces_{n}^{\cong} $};
		\node (BR) at (3, 0) {$ \abeliangroups ^{\cong} $};
		
		\draw [->] (TL) to (TR);
		\draw [->] (TL) to (BL);
		\draw [->] (TR) to (BR);
		\draw [->] (BL) to (BR);
	\end{tikzpicture},
\end{center}
where on the right we have, respectively, the $\infty$-groupoid $\emspaces_{n}^{\cong}$ of abelian Eilenberg-MacLane spaces of degree $n$ and the groupoid of discrete abelian groups. Here, the right vertical map is given by taking $\pi_{n}(-)$\footnote{These homotopy groups are well-defined without a choice of a basepoint as abelian Eilenberg-MacLane spaces are simple.} and the bottom horizontal map is the classifying map for the $G$-action on $M$. 
\end{defin}

\begin{observation}
\label{observation:canonical_basepoint_of_twisted_em_space}
The canonical map $\B_{G}(M, n+1) \rightarrow \B G$ has a canonical section induced from the functor $\B^{n}(-) \colon \abeliangroups^{\cong} \rightarrow \emspaces_{n}^{\cong}$. The image of the canonical basepoint of $\B G$ under this section equips $\B _{G}(M, n+1)$ with a canonical basepoint of its own. 
\end{observation}

\begin{observation}[Universal property]
\label{observation:moduli_interpretation_of_eilenberg_maclane_spaces}
Notice that by definition, for any space $Y$ we have an equivalence

\begin{center}
$\map(Y, \B_{G}(M, n+1)) \simeq \map(Y, \B G) \times _{\map(Y, \abeliangroups ^{\cong})} \map(Y, \emspaces_{n}^{\cong})$.
\end{center}
Unwrapping what this means, we see that $\map(Y, B_{G}(M, n+1))$ is $\infty$-groupoid of the following triples:
\begin{enumerate}
\item a map $X \rightarrow Y$ levelwise fibred in Eilenberg-MacLane spaces of degree $n$, 
\item a map $\alpha  \colon Y \rightarrow \B G$ and 
\item an isomorphism $\epsilon  \colon \pi_{n}X \simeq \alpha^{*}M$ of modules over $Y$.
\end{enumerate}
Note that $\pi_{n}X$ is well-defined as a module as by assumptions all fibres are simple. This gives a moduli-theoretical interpretation of $\B_{G}(M, n+1)$.
\end{observation}

\begin{rem}
\label{rem:homotopy_of_an_eilenberg_maclane_space}
Using \cref{observation:self_equivalences_of_an_em_space_as_a_group}, we see that 
\begin{enumerate}
    \item $\B_{G}(M, n+1)$ is connected,
    \item $\pi_{1}\B_{G}(M, n+1) \simeq G$, 
    \item $\pi_{n+1}\B_{G}(M, n+1) \simeq M$ with the given $G$-module structure, as the classifying map for the $G$-action factors through $\B G \rightarrow \B \Aut_{\abeliangroups}(M)$, and we have
    \[
    \emspaces_{n}^{\cong} \times_{\abeliangroups^{\cong}} \B\Aut_{\abeliangroups}(M) \simeq \B \Aut_{\spaces}(\B^{n}M) \simeq \B(\Aut_{\abeliangroups}(M) \ltimes \B^{n}M)
    \]
    \item other homotopy groups vanish,
\end{enumerate}
where all of the homotopy groups are taken with respect to the basepoint of \cref{observation:canonical_basepoint_of_twisted_em_space}. This calculation motivates viewing $\B_{G}(M, n+1)$ as a variation on the notion of an Eilenberg-MacLane space and the corresponding notation, which is taken from \cite{realization_space_of_a_pi_algebra}. Beware that not all spaces satisfying the above four properties are equivalent to $\B_{G}(M, n+1)$. 
\end{rem}

\begin{example}
\label{example:y_gplus_mn_type_classified_by_a_map}
Let $Y$ be a connected $(n-1)$-type and suppose that we are given a map
\[
f  \colon Y \rightarrow \B_{G}(M, n+1),
\]
which by \cref{observation:moduli_interpretation_of_eilenberg_maclane_spaces} corresponds to a triple 
\[
(X \rightarrow Y, \alpha  \colon Y \rightarrow \B G, \pi_{n}X \simeq \alpha^{*} M).
\]
After unwrapping the definitions, we see that if $f$ is a $1$-equivalence (that is, induces an isomorphism $\pi_{1} Y \simeq G$ on fundamental groups) then $X$ is of type $Y +_{G} (M, n)$ in the sense of \cref{definition:moduli_space_of_spaces_of_type_y_gplus_mn}.
\end{example}

\begin{defin}
\label{defin:infty_group_of_autos_of_a_pair}
Let $G$ be a group and let $\B G \rightarrow \abeliangroups^{\cong}$ be a classifying map for a $G$-module $M$. The \emph{automorphism $\infty$-group of the pair}
\[
\Aut(\B G, M)
\]
is the space of self-equivalences of $\B G$ considered as an object of $\spaces_{/\abeliangroups^{\cong}}$.
\end{defin}

Notice that since $\B_{G}(M, n+1)$ is a pullback along the map $\B G \rightarrow \abeliangroups^{\cong}$, it acquires an action of $\Aut(BG, M)$, inducing one on $\map(Y, \B_{G}(M, n+1))$ for any space $Y$. The subspace $\map_{1-\mathrm{eq}}(Y, \B_{G}(M, n+1))$ of $1$-equivalences is stable under this action. Similarly, we have an action of $\Aut_{\spaces}(Y)$ by acting on the source, and these two actions commute with each other as they are given respectively by pre- and post-composition. 

The main result of this section, \cref{thm:classification_of_n_types_fixed_module} below, informally says that we have a canonical equivalence
\begin{equation}
\label{equation:homotopy_orbit_statement_classification_of_n_types}
\mathcal{M}(Y +_{G} (M, n)) \simeq \map_{1-\mathrm{eq}}(Y, \B_{G}(M, n+1))_{h( \Aut_{\spaces}(Y) \times \Aut(\B G, M))};
\end{equation}
that is, the moduli space on the left can be recovered as an appropriate space of homotopy orbits. By \cref{observation:homotopy_quotient_by_an_infty_group}, to give such an equivalence is equivalent to giving a fibre sequence of space of the form
\begin{equation}
\label{equation:fibre_seq_in_classification_of_n_types_with_fixed_module}
\map_{1-\mathrm{eq}}(Y, \B_{G}(M, n+1)) \rightarrow \mathcal{M}(Y +_{G} (M, n)) \rightarrow \mathcal{M}(Y) \times \B \Aut(\B G, M),
\end{equation}
where we consider the target as pointed using $Y \in \mathcal{M}(Y)$ and the canonical basepoint of $\B \Aut(\B G, M)$. We now explain the construction of these two maps together with the nullhomotopy of their composite. 

\begin{construction}
\label{construction:fibre_sequence_of_moduli_spaces_for_n_types}
We begin by defining the right hand side map in (\ref{equation:fibre_seq_in_classification_of_n_types_with_fixed_module}). Since its codomain is a product, we have to define a map into each of the factors. The map $\mathcal{M}(Y +_{G} (M, n)) \rightarrow \mathcal{M}(Y)$ sends $X$ to $X_{\leq (n-1)}$. To define the map onto the second factor, observe that $\B \Aut(\B G, M)$ is canonically equivalent to the subgroupoid of $(\spaces_{/\abeliangroups ^{\cong}})^{\cong}$ spanned by spaces equivalent to $\B G$ equipped with its map to $\abeliangroups^{\cong}$. The needed map $\mathcal{M}(Y +_{G} (M, n)) \rightarrow \B \Aut(\B G, M)$ takes $X$ to $X_{\leq 1}$ considered as a space over $\abeliangroups^{\cong}$ through the classifying map for the action on $\pi_{n}(X)$. 

The left map is defined by the construction of  \cref{example:y_gplus_mn_type_classified_by_a_map}. By the universal property of \cref{observation:moduli_interpretation_of_eilenberg_maclane_spaces}, spaces $X$ of type $Y +_{G} (M, n)$ obtained in this way have $(n-1)$-truncation canonically equivalent to $Y$ and come with canonical isomorphisms $\pi_{1}X \simeq \pi_{1}Y \simeq G$ and $\pi_{n}X \simeq M$, the latter an isomorphism of $G$-modules. It follows that the composite of the left and right map is canonically null-homotopic. 
\end{construction}

\begin{theorem}
\label{thm:classification_of_n_types_fixed_module}
Let $Y$ be a connected $(n-1)$-type whose fundamental group is isomorphic to $G$ and let $M$ be a $G$-module. Then, the diagram 
\[
\map_{1-\mathrm{eq}}(Y, \B_{G}(M, n+1)) \rightarrow \mathcal{M}(Y +_{G} (M, n)) \rightarrow \mathcal{M}(Y) \times \B \Aut(\B G, M)
\]
of \cref{construction:fibre_sequence_of_moduli_spaces_for_n_types} together with its canonical nullhomotopy is a fibre sequence.
\end{theorem}

\begin{proof}
The canonical nullhomotopy induces a map 
\[
\map_{1-\mathrm{eq}}(Y, \B_{G}(M, n+1)) \rightarrow F
\]
into the fibre of the second map. Tracing through definitions, we see that $F$ can be identified with the $\infty$-groupoid of spaces $X$ of type $Y +_{G} (M, N)$ together with a fixed equivalence $X_{\leq n-1} \simeq Y$, fixed equivalence $\phi \colon \B G \simeq X_{\leq 1}$, and an isomorphism $\phi^{*} \pi_{n}(X \rightarrow Y) \simeq M$ of $G$-modules. By an application of \cref{observation:moduli_interpretation_of_eilenberg_maclane_spaces}, this is equivalent to $\map_{1-\mathrm{eq}}(Y, \B_{G}(M, n+1))$, as needed. 
\end{proof}

\begin{cor}
A homotopy type of a space of type $Y +_{G} (M, n)$ is uniquely specified by a homotopy class of maps $Y \rightarrow \B_{G}(M, n+1)$ inducing an isomorphism on fundamental groups, well-defined up to the action of $\pi_{0}(\Aut_{\spaces}(Y) \times \Aut(BG, M))$.
\end{cor}

\begin{proof}
Keeping in mind \cref{thm:classification_of_n_types_fixed_module}, this is the same argument as in the proof of \cref{prop:postnikov_invariant_determining_homotopy_type_of_y_plus_an}.
\end{proof}

\begin{rem}
\cref{thm:classification_of_n_types_fixed_module} is essentially equivalent to \cite[Prop. 3.7]{realization_space_of_a_pi_algebra} of Blanc-Dwyer-Goerss, with minor differences due to the fact that we work with unpointed spaces. The same arguments as above used in the $\infty$-category $\spaces_{*}$ of pointed spaces would yield a result identical to theirs. 
\end{rem}

\section{Classification of spherical presheaves} 
\label{section:classification_of_presheaves}

In this chapter, which is the technical heart of the current work, we develop a Postnikov theory for spherical (that is, product-preserving) presheaves on a small $\infty$-category $\inftyc$ subject to some technical conditions. Our main results relate the Postnikov invariants in $\presheaves_{\Sigma}(\inftyc)$ and $\presheaves_{\Sigma}(h\inftyc)$. This is useful, since $\presheaves_{\Sigma}(h \inftyc)$ only involves presheaves on an ordinary category and hence is quite computable in practice. 

\subsection{Moduli objects in presheaves}

In \S\ref{section:classification_of_spaces}, we developed a theory of Postnikov invariants of spaces by using the Grothendieck construction. The latter gave a canonical equivalence, for any space $X \in \spaces$, of the form 
\[
\Fun(X, \spaces^{\cong}) \simeq (\spaces_{/X})^{\cong}.
\]
In these terms, the $\infty$-groupoid $\spaces^{\cong}$ can be thought of as a moduli object, as maps into it parametrize spaces over $X$. In this section, we show the existence of similar objects internal to the setting of presheaves on fixed small indexing $\infty$-category $\inftyc$. 

The $\infty$-category of $\presheaves(\inftyc) := \fun(\inftyc^{op}, \spaces)$ of presheaves is an archetypical example of an \emph{$\infty$-topos}. The latter are, informally, $\infty$-categories which behave like the $\infty$-category of spaces; for a detailed account see \cite[\S 6]{lurie_higher_topos_theory}. 

\begin{rem}
In the current work we do not consider any $\infty$-topoi which are not presheaf categories, hence a formal definition will not be necessary. We only use the language of $\infty$-topoi as the results we reference are proven at this level of generality and so it would be unnatural to avoid it. 
\end{rem}
We have the Yoneda embedding $y  \colon \inftyc \hookrightarrow \presheaves(\inftyc)$ given informally by the formula
\[
y(c)(c^{\prime}) := \map_{\inftyc}(c^{\prime}, c);
\]
Lurie shows that this is a fully faithful functor of $\infty$-categories \cite[5.1.3.1]{lurie_higher_topos_theory}. Limits and colimits in $\presheaves(\inftyc)$ are computed levelwise, because it is a functor category \cite[5.1.3.2]{lurie_higher_topos_theory}. In particular, presheaf $\infty$-categories are always complete and cocomplete, because that is true for the $\infty$-category $\spaces$. They also enjoy the following universal property characterizing them as free cocompletions.

\begin{prop}[Universal property of the presheaf $\infty$-category]
\label{prop:universal_property_presheaf_category}
Let $\inftyd$ be a cocomplete $\infty$-category and $\Fun^{L}(\presheaves(\inftyc), \inftyd)$ be the $\infty$-category of cocontinous functors. Then, the restriction
\[
\fun^{L}(\presheaves(\inftyc), \inftyd) \rightarrow \fun(\inftyc, \inftyd)
\]
along the Yoneda embedding is an equivalence of $\infty$-categories.
\end{prop}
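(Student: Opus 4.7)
The plan is to produce the inverse equivalence by left Kan extension along the Yoneda embedding. Given $F : \inftyc \to \inftyd$, since $\inftyd$ is cocomplete, the pointwise left Kan extension $y_{!} F : P(\inftyc) \to \inftyd$ exists; explicitly, $(y_{!}F)(X)$ is the colimit of $F$ over the $\infty$-category of elements $\inftyc_{/X} = \inftyc \times_{P(\inftyc)} P(\inftyc)_{/X}$. Standard formal properties of left Kan extensions into cocomplete targets imply that $y_{!}F$ is cocontinuous, and full faithfulness of $y$ gives a canonical equivalence $(y_{!}F) \circ y \simeq F$. This already shows that restriction along $y$ admits a section.

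To see that this section is also a retraction, the essential input is the density theorem: for every presheaf $X \in P(\inftyc)$, the canonical diagram $\inftyc_{/X} \to \inftyc \xhookrightarrow{y} P(\inftyc)$ has colimit $X$. Granting density, any cocontinuous $G : P(\inftyc) \to \inftyd$ satisfies
\begin{center}
$G(X) \simeq G\bigl(\varinjlim_{y(c) \to X} y(c)\bigr) \simeq \varinjlim_{y(c) \to X} (G \circ y)(c) \simeq y_{!}(G \circ y)(X),$
\end{center}
naturally in $X$, so the counit $y_{!}(G \circ y) \to G$ is an equivalence. Combined with the previous paragraph, this shows that the restriction functor and $y_{!}$ are mutually inverse equivalences between $\fun^{L}(P(\inftyc), \inftyd)$ and $\fun(\inftyc, \inftyd)$.

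The hard part is the density theorem in the $\infty$-categorical setting, since one must produce a coherent colimit cocone rather than merely an isomorphism on homotopy categories. This is \cite{lurie_higher_topos_theory}[5.1.5.3], and the proposition we want is its formal consequence \cite{lurie_higher_topos_theory}[5.1.5.6]; so in practice the cleanest proof is simply to cite these results. If one wanted to avoid this citation, one would have to verify density by hand, which amounts to computing, for each $c \in \inftyc$, the mapping space $\varinjlim_{y(c') \to X} \mathrm{map}_{P(\inftyc)}(y(c), y(c'))$ in $\spaces$ and identifying it with $X(c)$ via the Yoneda lemma, and then bootstrapping to the universal property using that both sides of the asserted equivalence of functor $\infty$-categories preserve the same limits.
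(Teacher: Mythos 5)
Your proposal is correct and ultimately rests on the same citation the paper uses: the paper's entire proof is a reference to \cite{lurie_higher_topos_theory}[5.1.5.6], and you arrive at exactly that, while additionally (and accurately) sketching the underlying mechanism of left Kan extension along the Yoneda embedding together with the density theorem \cite{lurie_higher_topos_theory}[5.1.5.3]. No gaps; your extra exposition is a faithful outline of how Lurie's result is proved.
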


\begin{proof}
This is \cite[5.1.5.6]{lurie_higher_topos_theory}.
\end{proof}

Note that one interpretation of \cref{thm:grothendieck_construction_for_spaces}, which in particular implies that for any space $X$ 
\[
\Fun(X, \spaces^{\cong}) \simeq (\spaces_{/X})^{\cong},
\]
is that the $\infty$-groupoid $\spaces^{\cong}$ is the object classifier for the $\infty$-topos of spaces. By a result of Lurie, such an object classifier exists in any $\infty$-topos \cite[6.1.6]{lurie_higher_topos_theory}. In the case of presheaf $\infty$-categories, we have the following explicit description:

\begin{prop}
\label{prop:existence_of_an_object_classifier}
The presheaf $\presheaves(\inftyc_{/-}) ^{\cong} \colon \inftyc^{op} \rightarrow \largespaces$ valued in the $\infty$-category of large spaces defined on objects by 
\[
\presheaves(\inftyc_{/-}) ^{\cong}(c) := \presheaves(\inftyc_{/c})^{\cong}
\]
and on morphisms by sending an arrow $f \colon c \rightarrow d$ to restriction 
\[
\presheaves(\inftyc_{/d})^{\cong} \rightarrow \presheaves(\inftyc_{/c})^{\cong}
\]
along composition with $f$ functor $\inftyc_{/c} \rightarrow \inftyc_{/d}$, is the object classifier in the $\infty$-topos $\presheaves(\inftyc)$. That is, for any presheaf $X \in \presheaves(\inftyc) := \Fun(\inftyc^{op}, \spaces)$ of small spaces we have a natural equivalence
\begin{equation}
\label{equation:object_classifier_for_presheaves}
\map_{\fun(\inftyc^{op}, \largespaces)} (X, \presheaves(\inftyc_{/-}) ^{\cong}) \simeq (\presheaves(\inftyc) _{/X}) ^{\cong}
\end{equation}
between natural transformations into $\presheaves(\inftyc_{/-}) ^{\cong}$ and the $\infty$-groupoid of presheaves over $X$. 
\end{prop}

\begin{proof}
Both sides of (\ref{equation:object_classifier_for_presheaves}) take (small) colimits in $X$ to limits, the latter by \cite[6.1.3.9, 6.1.3.10]{lurie_higher_topos_theory}. Thus, it is enough to show that both sides are naturally equivalent when $X = y(c)$ is in the image of the Yoneda embedding, in which case this restricts to the standard equivalence
\[
\presheaves(\ccat)_{/y(c)} \simeq \presheaves(\ccat_{/c}),
\]
which holds even before passing to maximal subgroupoids, see \cite[5.1.6.12]{lurie_higher_topos_theory}.
\end{proof}

\begin{warning}[Size issues]
When discussing object classifiers we invariably run into size issues. In the case of \cref{prop:existence_of_an_object_classifier}, this is visible in the fact that the object classifier is a functor
\[
\presheaves(\inftyc_{/-}) ^{\cong} \colon \inftyc^{op} \rightarrow \widehat{\spaces},
\]
thus is a presheaf valued in not necessarily small spaces and hence not an object of $\presheaves(\ccat)$ itself. This is enough for our purposes. 

Note that the size is essentially the only issue, as a theorem of Rezk asserts that for any $\infty$-topos $\euscr{X}$, the functor that associates to each $x \in \euscr{X}$ the $\infty$-groupoid $(\euscr{X} _{/x}) ^{\kappa}$ of those morphisms into $x$ which are relatively $\kappa$-compact, is representable for all sufficiently large regular cardinals $\kappa$, see \cite[6.1.6.8]{lurie_higher_topos_theory}.  
\end{warning}

The description of the object classifier given above might seem slightly opaque at first sight, but it can be understood in the following way. Given a morphism $X \rightarrow Y$ of presheaf, its associated classifying map has the form $Y \rightarrow \presheaves(\inftyc_{/-}) ^{\cong}$ and so to each object $c \in \mathcal{C}$ and each element $e \in Y(c)$ it attaches a presheaf over $\inftyc_{/c}$. By naturality, this presheaf is the local fibre over $p$ in the following sense:

\begin{defin}
\label{defin:local_fibre}
Let $p \colon X \rightarrow Y$ be a map of presheaves and let $e \in Y(c)$ be a point. The \emph{local fibre over $e$}, denoted by $F_{e}$, is the pullback of the diagram 

\begin{center}
$(p_{c})_{*} X \rightarrow (p_{c})_{*} Y \leftarrow 1$
\end{center}
in $\presheaves(\inftyc_{/c})$, where $p_{c} \colon \inftyc_{/c} \rightarrow \inftyc$ is the projection, $(p_{c})_{*} \colon \presheaves(\inftyc) \rightarrow \presheaves(\inftyc _{/c})$ restriction along the projection, and $1 \rightarrow (p_{c})_{*}Y$ is the map from the terminal presheaf specified by
\[
e \in Y(c) \simeq ((p_{c})_{*} Y)(\mathrm{id}_{c}) \simeq \varprojlim_{\inftyc_{/c}} \ (p_{c})_{*} Y \simeq \map_{\presheaves(\inftyc_{/c})}(1, Y)
\]
\end{defin}

\begin{rem}
After unwrapping \cref{defin:local_fibre}, we see that in concrete terms the local fibre $F_{e} \colon (\ccat_{/c})^{op} \rightarrow \spaces$ is given by the formula
\[
F_{e}(f  \colon c^{\prime} \rightarrow c) := X(c^{\prime}) \times _{Y(c^{\prime})} \{ f^{*}e \}.
\]
\end{rem}

\begin{rem}
The intuition behind \cref{defin:local_fibre} is as follows: Given a morphism $p \colon X \rightarrow Y$ of presheaves, to encode $X$ it is not enough to remember for each $c \in \inftyc$ the ordinary fibres of the map $p(c) \colon X(c) \rightarrow Y(c)$ of spaces (which can be recovered as the value of the local fibre when evaluated on $\mathrm{id}_{c} \in \inftyc_{/c}$). Instead, the local fibre also encodes the maps between fibers of $p(c)$ for varying objects $c$. 
\end{rem}

One might likewise be interested in classifying families of objects equipped with some additional algebraic structure. For example, in \S\ref{subsection:action_of_the_fundamental_group} we made use of the theory of modules (more commonly known as local coefficient systems). By the Grothendieck construction, to give a module over a space $X$ was the same as to give a functor $X \rightarrow \abeliangroups^{\cong}$ into the groupoid of abelian groups. Informally, this says that $\abeliangroups ^{\cong}$ plays the role of \emph{moduli of abelian groups} in the $\infty$-category of spaces. Below, we show such that an analogous object exists for any presheaf category and describe it explicitly.

Let $\ltopoi$ denote the $\infty$-category of $\infty$-topoi and functors that preserve all colimits and finite limits considered in \cite[\S 6.3]{lurie_higher_topos_theory}. The $\infty$-category $\ltopoi$ is convenient setting for representing moduli problems due to the following:

\begin{lemma}
\label{lem:existence_of_classifying_topoi}
Let $\euscr{A}$ be small a $\infty$-category with finite limits. Then, for any $\infty$-topos $\euscr{X}$ there is a natural equivalence
\[
\Fun^{L}_{lex}(\presheaves(\euscr{A}), \euscr{X}) \simeq \Fun_{lex}(\euscr{A}, \euscr{X})
\]
between the $\infty$-categories of cocontinous, left exact functors $\presheaves(\euscr{A}) \rightarrow \euscr{X}$ and that of left exact functors $\euscr{A} \rightarrow \euscr{X}$. 
\end{lemma}

\begin{proof}
By the universal property of the presheaf $\infty$-category, which we stated as \cref{prop:universal_property_presheaf_category}, to give a cocontinous functor $\presheaves(\euscr{A}) \rightarrow \euscr{X}$ is the same as to give a functor $\euscr{A} \rightarrow \euscr{X}$. This restricts to an equivalence as above by \cite[6.1.5.2]{lurie_higher_topos_theory}.
\end{proof}

\begin{rem}
Notice that the maximal subgroupoid of $\Fun^{L}_{lex}(\presheaves(\euscr{A}), \euscr{X})$ is exactly the mapping space in the $\infty$-category $\ltopoi$. Thus, another way to state \cref{lem:existence_of_classifying_topoi} is to say that the functor
\[
\fun_{lex}(\euscr{A}, -)^{\cong} \colon \ltopoi \rightarrow \largespaces
\]
is corepresented inside the $\infty$-category $\ltopoi$ by the presheaf $\infty$-topos $\presheaves(\euscr{A})$. 
\end{rem}

The usefulness of \cref{lem:existence_of_classifying_topoi} comes from the observation of Lawvere that algebraic structures can often be described as left exact functors out of a small $\infty$-category. The following example will be of particular importance: 

\begin{defin}
\label{defin:discrete_abelian_group}
Let $\infty$ be an $\infty$-category with finite limits. A \emph{discrete abelian group object} in $\inftyc$ is an abelian group object in the subcategory $\text{Disc}(\inftyc) := \tau_{\leq 0} \inftyc$ of discrete objects. We will write $\abeliangroups(\inftyc)$ for the category of discrete abelian group objects. 
\end{defin}

\begin{rem}
\label{remark:functoriality_of_discrete_abelian_groups}
Since by \cite[5.5.6.16]{lurie_higher_topos_theory} left exact functors preserve discrete objects, the construction $\inftyc \mapsto \abeliangroups(\inftyc)$ is functorial in $\infty$-categories with finite limits and left exact functors. 
\end{rem}

\begin{warning}
Note that in the context of \cref{defin:discrete_abelian_group}, we have
\[
\abeliangroups(\inftyc) \simeq \abeliangroups(\text{Disc}(\inftyc)).
\]
This is consistent with our convention of reserving the word ``group'' for the classical notion, rather than any of its variants appearing in higher algebra. 
\end{warning}

We will write $\abeliangroups_{fg}^{op}$ for the opposite of the category of finitely generated abelian groups. Since for any $A \in \abeliangroups_{fg}^{op}$
\[
y(\mathbb{Z})(A) \simeq \map_{\abeliangroups_{fg}^{op}}(A, \mathbb{Z}) \simeq \map_{\abeliangroups_{fg}}(\mathbb{Z}, A) \simeq A
\]
is canonically a discrete abelian group, the representable presheaf $y(\mathbb{Z}) \in \presheaves(\abeliangroups_{fg}^{op})$ is canonically an abelian group object inside the presheaf $\infty$-category. 

\begin{prop}
\label{prop:explicit_description_of_the_classifying_topos_for_abelian_groups}
For any $\infty$-topos $\euscr{X}$, the construction 
\[
F \mapsto F(y(\mathbb{Z})) 
\]
yields an equivalence
\[
\Fun^{L}_{lex}(\presheaves(\abeliangroups_{fg}^{op}), \euscr{X}) \simeq \abeliangroups(\euscr{X})
\]
between the $\infty$-category of cocontinuous, left exact functors and the category of discrete abelian group objects of $\euscr{X}$.
\end{prop}

\begin{proof}
We first claim that if $\inftyc$ is an $\infty$-category with finite limits, then the construction 
\[
A \mapsto A(\mathbb{Z})
\]
yields an equivalence
\[
\Fun_{lex}(\abeliangroups_{fg}^{op}, \inftyc) \simeq \abeliangroups(\inftyc)
\]
between left exact functors $\abeliangroups_{fg}^{op} \rightarrow \inftyc$ and the category of discrete abelian group objects. To see this, we can replace $\inftyc$ by its subcategory $\text{Disc}(\inftyc)$ of discrete objects, which is an ordinary category. In this context, the needed equivalence follows from a classical argument using Lawvere theories. 

The statement then follows from the above observation applied to $\inftyc = \euscr{X}$ and \cref{lem:existence_of_classifying_topoi}. 
\end{proof}

We can now prove the existence of a moduli object for families of abelian groups in the context of $\infty$-categories of presheaves: 

\begin{prop}
\label{prop:existence_of_moduli_of_discrete_abelian_groups}
Let $\inftyc$ be a small $\infty$-category. Then, the $\largespaces$-valued presheaf $\mathcal{M}_{\inftyc}(\abeliangroups)$ on $\inftyc$ defined by
\[
\mathcal{M}_{\inftyc}(\abeliangroups)(c) := \abeliangroups(\presheaves(\inftyc_{/c}))^{\cong}
\]
is the moduli object for discrete abelian groups in $\presheaves(\inftyc)$; that is, for any $X \in \presheaves(\inftyc)$ there is a natural equivalence of spaces 
\[
\map_{\fun(\inftyc^{op}, \largespaces)}(X, \mathcal{M}_{\inftyc}(\abeliangroups))  \simeq \abeliangroups(\presheaves(\inftyc)_{/X})^{\cong}
\]
between natural transformations into the moduli and the groupoid of discrete abelian groups in presheaves over $X$.
\end{prop}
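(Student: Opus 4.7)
The plan is to verify the desired equivalence by checking that both sides, viewed as $\largespaces$-valued functors of $X \in P(\inftyc)$, send colimits of presheaves to limits of (large) spaces, and that they agree on representables; since every presheaf is a small colimit of representables, this will imply the result.

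First I would rewrite the right-hand side in a form where \textbf{Proposition \ref{prop:taking_overcategories_preserves_limits}} can be applied. The Example preceding the statement identifies abelian group objects in any $\infty$-topos $\euscr{X}$ with left exact functors $\abeliangroups_{fg}^{op} \to \euscr{X}$, so \textbf{Lemma \ref{lem:existence_of_classifying_topoi}} yields a natural equivalence
\begin{equation*}
\abeliangroups(\euscr{X})^{\cong} \simeq \fun^{L}_{lex}(P(\abeliangroups_{fg}^{op}), \euscr{X})^{\cong} = \text{Map}_{\ltopoi}(P(\abeliangroups_{fg}^{op}), \euscr{X}).
\end{equation*}
Applied with $\euscr{X} = P(\inftyc)_{/X}$, which is an $\infty$-topos by \textbf{Remark \ref{rem:presheaves_functor_takes_value_in_topoi}}, this recasts the right-hand side as $\text{Map}_{\ltopoi}(P(\abeliangroups_{fg}^{op}), P(\inftyc)_{/X})$.

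Next I would establish contravariant cocontinuity in $X$. The left-hand side is a mapping space in the functor $\infty$-category $\fun(\inftyc^{op}, \largespaces)$, and such mapping spaces always send colimits in the first variable to limits. For the right-hand side, \textbf{Proposition \ref{prop:taking_overcategories_preserves_limits}} together with \textbf{Remark \ref{rem:presheaves_functor_takes_value_in_topoi}} shows that $X \mapsto P(\inftyc)_{/X}$ carries colimits in $P(\inftyc)$ to limits in $\ltopoi$; postcomposing with $\text{Map}_{\ltopoi}(P(\abeliangroups_{fg}^{op}), -)$, which preserves all limits, gives the desired property.

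It then remains to compare the two sides at representables $X = y(c)$. On the left, the Yoneda lemma (valid for $\largespaces$-valued presheaves, modulo the size remarks already made in the text) gives
\begin{equation*}
\text{Map}(y(c), \mathcal{M}_{\inftyc}(\abeliangroups)) \simeq \mathcal{M}_{\inftyc}(\abeliangroups)(c) = \abeliangroups(P(\inftyc_{/c}))^{\cong}
\end{equation*}
by definition. On the right, the equivalence $P(\inftyc)_{/y(c)} \simeq P(\inftyc_{/c})$ noted earlier in the text (from \cite{lurie_higher_topos_theory}[5.1.6.12]) identifies $\abeliangroups(P(\inftyc)_{/y(c)})^{\cong}$ with $\abeliangroups(P(\inftyc_{/c}))^{\cong}$. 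Since both functors are cocontinuous-to-limits and agree on representables, and every presheaf is a small colimit of representables, the natural equivalence extends to all of $P(\inftyc)$.

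The main obstacle, if any, is bookkeeping: one needs to know that the Yoneda and colimit-of-representables arguments go through for $\largespaces$-valued presheaves, and that naturality of the chain of equivalences is preserved throughout. Both points are formal once one commits to working with a large enough universe, and no further input beyond the cited results is needed.
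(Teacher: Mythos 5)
Your proposal is correct and follows essentially the same route as the paper's own proof: rewrite the right-hand side as $map_{\ltopoi}(P(\abeliangroups_{fg}^{op}), P(\inftyc)_{/X})$ via \textbf{Lemma \ref{lem:existence_of_classifying_topoi}}, check that both sides carry colimits in $X$ to limits using \textbf{Proposition \ref{prop:taking_overcategories_preserves_limits}} and \textbf{Remark \ref{rem:presheaves_functor_takes_value_in_topoi}}, and then compare on representables via $P(\inftyc)_{/y(c)} \simeq P(\inftyc_{/c})$. The only cosmetic difference is that the paper closes the argument by invoking the universal property of $P(\inftyc)$ as a free cocompletion (so that agreement of the restrictions along Yoneda formally lifts to an equivalence of functors), which is the precise form of your "agree on representables, hence agree everywhere" step.
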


\begin{proof}
We first construct a morphism
\begin{equation}
\label{equation:map_for_the_universal_property_of_moduli_of_abelian_groups}
\phi \colon \abeliangroups(\presheaves(\inftyc)_{/X})^{\cong} \rightarrow \map_{\fun(\inftyc^{op}, \largespaces)}(X, \mathcal{M}_{\inftyc}(\abeliangroups)) 
\end{equation}
of $\infty$-groupoids; it will be clear from the construction that it is contravariantly natural in $X$. Suppose that $A \in \abeliangroups(\presheaves(\inftyc)_{/X})^{\cong}$; we have to declare its image 
\[
\phi_{A} \colon X \rightarrow \mathcal{M}_{\inftyc}(\abeliangroups). 
\]
inside the mapping space. For $c \in \inftyc$ we define  
\[
\phi_{A}(c) \colon X(c) \rightarrow \mathcal{M}_{\inftyc}(\abeliangroups)(c) = \abeliangroups(\presheaves(\mathcal{C}_{/c}))^{\cong} 
\]
by declaring that if $e \in X(c)$ then
\[
\phi_{A}(c)(e) := e_{*}(A),
\]
where we identify $e$ with a map $y(c) \rightarrow X$ and 
\[
e_{*} \colon \presheaves(\inftyc)_{/X} \rightarrow \presheaves(\inftyc)_{/y(c)} \simeq \presheaves(\inftyc_{/c})
\]
is the corresponding pullback functor, which is left exact and hence induces a functor between groupoids of abelian group objects. We will show that $\phi$ is an equivalence. 

Notice that the target of (\ref{equation:map_for_the_universal_property_of_moduli_of_abelian_groups}) take small colimits in $X$ to limits of spaces. We claim that the same is true for the source. Indeed, by \cref{prop:explicit_description_of_the_classifying_topos_for_abelian_groups} the source can be identified with the $\infty$-groupoid of of cocontinuous, left exact functors, so that 
\[
\abeliangroups(\presheaves(\inftyc)_{/X})^{\cong} \simeq \map_{\ltopoi}(\presheaves(\abeliangroups_{fg}^{op}), \presheaves(\inftyc)_{/X}).
\]
The claim then follows from a combination of \cite[6.1.3.9, 6.1.3.10]{lurie_higher_topos_theory}, which implies that passing to overcategories of 
 an $\infty$-topos takes colimits to limits in $ \widehat{\euscr{C}at _{\infty}}$, and \cite[6.3.2.3]{lurie_higher_topos_theory}, which says that 
the natural inclusion $\ltopoi \hookrightarrow \widehat{\euscr{C}at _{\infty}}$ preserves limits. 

Since any $X \in \presheaves(\inftyc)$ is a small colimit of representables, we deduce that it is enough to verify that $\phi$ is an equivalence when $X \simeq y(c)$ is representable. In this case, unwrapping the definitions shows that $\phi$ reduces to the standard equivalence
\[
\abeliangroups(\presheaves(\inftyc)_{/y(c)})^{\cong} \simeq \abeliangroups(\presheaves(\inftyc_{/c}))^{\cong} =  \mathcal{M}_{\inftyc}(\abeliangroups)(c).
\]
\end{proof}

\begin{rem}
Note that in \S\ref{subsection:postnikov_invariants_of_spaces} we worked in the $\infty$-topos of spaces and used the word ``moduli'' to refer to $\infty$-groupoids of objects of 
a given type. This terminology is consistent with \cref{prop:existence_of_moduli_of_discrete_abelian_groups} in the following sense: if $\inftyc \simeq \Delta^{0}$ is a one-point category, so that $\presheaves(\inftyc) \simeq \spaces$, the presheaf $\mathcal{M}_{\inftyc}(\abeliangroups)$ appearing in \cref{prop:existence_of_moduli_of_discrete_abelian_groups} is exactly the groupoid of abelian groups. 
\end{rem}

\begin{rem}
In the context of \cref{prop:existence_of_moduli_of_discrete_abelian_groups}, note that we have a canonical equivalence
\[
\abeliangroups(\presheaves(\inftyc_{/c}))^{\cong} \simeq \fun((\inftyc_{/c})^{op}, \abeliangroups)^{\cong},
\]
as an abelian group in a presheaf category is the same as a presheaf of abelian groups.
\end{rem}
We will also need moduli objects for Eilenberg-MacLane spaces, which we describe now. Recall that in the case of the $\infty$-category $\spaces$ of spaces, we observed that if $Y$ is an $(n-1)$-type, then a map $X \rightarrow Y$ is an $(n-1)$-equivalence with $X$ an $n$-type if and only if all of the fibres are Eilenberg-MacLane spaces of degree $n$. Alternatively, this happens precisely when the classifying map $Y \rightarrow \spaces$ factors through the \inftycat $\emspaces_{n}$ of Eilenberg-MacLane spaces of degree $n$. 

A similar statement holds in the $\infty$-category $\presheaves(\inftyc)$. More precisely, if $Y \in \presheaves(\inftyc)$ is an $(n-1)$-type, then a map $X \rightarrow Y$ of presheaves is a $(n-1)$-equivalence with $X$ an $n$-type if and only if all of the local fibres in the sense of \cref{defin:local_fibre} are valued in Eilenberg-MacLane spaces of degree $n$. This observation allows one to write down the a presheaf which plays for $\presheaves(\inftyc)$ a role analogous to the role played for spaces by the $\infty$-groupoid $\emspaces_{n}^{\cong}$. 

In the following, as elsewhere in the text, by an Eilenberg-Maclane space we mean a space of type $K(A, n)$ for some \emph{abelian} group $A$ and fixed degree $n \geq 1$. 

\begin{prop}
\label{prop:existence_of_moduli_of_em_spaces_in_presheaves}
Let $\inftyc$ be the small $\infty$-category. Then, the $\largespaces$-valued presheaf
\[
\mathcal{M}_{\inftyc}(\emspaces_{n}) \colon \inftyc^{op} \rightarrow \largespaces
\]
defined on objects by 
\[
\mathcal{M}_{\inftyc}(\emspaces_{n})(c) = \fun((\inftyc_{/c})^{op}, \emspaces_{n}^{\cong})
\]
and sending $f \colon c \rightarrow d$ to precomposition with the induced functor $\inftyc_{/c} \rightarrow \inftyc_{/d}$, is the moduli of Eilenberg-MacLane spaces of degree $n$ in the following sense: for any $X \in \presheaves(\inftyc)$ there is a natural equivalence 

\begin{center}
$\map_{\fun(\inftyc^{op}, \largespaces)}(X, \mathcal{M}_{\inftyc}(\emspaces_{n})) \simeq (\presheaves(\inftyc)^{\emspaces_{n}}_{/X})^{\cong}$,
\end{center}
where on the right hand side we have the $\infty$-groupoid of those presheaves over $X$ all of whose local fibres are valued in Eilenberg-MacLane spaces of degree $n$. 
\end{prop}

\begin{proof}
This is clear, as $\mathcal{M}_{\inftyc}(\emspaces_{n})$ is defined as a subobject of the object classifier described in \cref{prop:existence_of_an_object_classifier} and a map into $\presheaves(\inftyc_{/-}) ^{\cong}$ factors through it if and only if it all of the local fibres are valued in Eilenberg-MacLane spaces of degree $n$.
\end{proof}

Because of the key role played by the presheaf \cref{prop:existence_of_moduli_of_em_spaces_in_presheaves}, understanding its levelwise homotopy type is of considerable importance. This is equivalent to describing the possible equivalence classes of $\infty$-groupoids valued in Eilenberg-MacLane spaces of a fixed degree degree as well as their automorphisms. We show that in the case of $\infty$-categories possessing a terminal object, in particular in the case of overcategories, such presheaves can be understood using only their homotopy groups:

\begin{prop}
\label{prop:pointing_eilenberg_maclane_presheaves}
Let $\inftyc$ be a small \inftycat with a terminal object $t \in \inftyc$. Suppose that $X \colon \inftyc^{op} \rightarrow \emspaces_{n}$ is a presheaf of Eilenberg-MacLane spaces of degree $n$ and let 
\[
\pi_{n}X \colon \inftyc^{op} \rightarrow \abeliangroups
\]
be the presheaf of abelian groups defined by $(\pi_{n})X(c) := \pi_{n} X(c)$\footnote{Note that this is well-defined without choosing a basepoint as we only consider abelian Eilenberg-MacLane spaces, which are simple.}. Then,
\begin{enumerate}
    \item $X$ can be lifted to a presheaf of $\mathbf{E}_{\infty}$-spaces, 
    \item for any such lift there is a canonical equivalence $X \simeq \B^{n}(\pi_{n}X)$ of presheaves of $\mathbf{E}_{\infty}$-spaces 
\end{enumerate}
\end{prop}

\begin{proof}
If $\mathrm{pt}_{\inftyc} \colon \inftyc^{op} \rightarrow \spaces$ denotes the terminal presheaf, then since mapping from a constant functor is mapping into a limit we have 
\[
\map_{\presheaves(\inftyc)}(\mathrm{pt}_{\inftyc}, X) \simeq \varprojlim_{\inftyc^{op}} X \simeq X(t),
\]
where we've used that $t \in \inftyc^{op}$ is initial so that its inclusion is a final functor. It follows that a choice of a point of $X(t)$, which is non-empty as an Eilenberg-MacLane space, gives $X$ a structure of a pointed presheaf, which we can identify with a functor $X \colon \inftyc^{op} \rightarrow (\emspaces_{n})_{*}$. 

By \cref{prop:projectivity_of_em_spaces}, $X$ can be then further uniquely lifted to a presheaf valued in $\Alg_{\mathbf{E}_{\infty}}(\emspaces_{n})$, proving $(1)$. The functor 
\[
\B^{n}(-) \colon \abeliangroups \rightarrow \Alg_{\mathbf{E}_{\infty}}(\emspaces_{n})
\]
is an equivalence of $\infty$-categories with inverse 
\[
\pi_{n} \colon \Alg_{\mathbf{E}_{\infty}}(\emspaces_{n}) \rightarrow \abeliangroups
\]
which implies (2). 
\end{proof}

\begin{cor}
\label{corollary:presheaves_of_em_spaces_described_by_homotopy_groups_up_to_equivalence}
If $\ccat$ has a terminal object, then two presheaves $X, X^\prime \colon \inftyc^{op} \rightarrow \emspaces_{n}$ valued in Eilenberg-MacLane spaces 
of degree $n$ are equivalent if and only if their presheaves $\pi_{n}X$ and  $\pi_{n}X^\prime$ of homotopy groups are isomorphic. 
\end{cor}

\begin{proof}
This is immediate from the second part of \cref{prop:pointing_eilenberg_maclane_presheaves}, as once basepoints of $X, X'$ are chosen, they are both equivalent to $\B^{n} (\pi_{n} X) \simeq \B^{n} (\pi_{n} X')$. 
\end{proof}

\begin{warning}
In the context of \cref{prop:pointing_eilenberg_maclane_presheaves}, the assumption that the indexing $\infty$-category $\ccat$ has a terminal object is crucially important. Without the latter, it is no longer true that any presheaf of Eilenberg-MacLane spaces of fixd degree can be given a basepoint. 
\end{warning}

Using \cref{prop:pointing_eilenberg_maclane_presheaves}, we see that to understand automorphisms of an arbitrary Eilenberg-MacLane-valued presheaf, it is enough to do so in the case of a presheaf of the form $\B^{n} A$, where $A: \ccat^{op} \rightarrow \abeliangroups$ is a presheaf of abelian groups. This can be done by a map analogous to the absolute case studied in \cref{lemma:maps_between_eilenberg_maclane_spaces}. 

Left multiplication an an element $b \in \B^{n}A(t)$ defines an automorphism of $A(t)$, and since $t$ is assumed to be terminal, left multiplication by $f^{*}b$ for the unique $f \colon c \rightarrow t$ defines a compatible family of automorphisms of $\B^{n}A(c)$ for $c \in \ccat$. This gives a map of $\infty$-groups 
\[
\B^{n} A(t) \rightarrow \Aut_{\presheaves(\ccat)}(\B^{n} A)
\]
which together with functoriality of $\B^{n}$ assembles into a morphism
\begin{equation}
\label{equation:map_into_automorphisms_of_a_presheaf}
\phi \colon \B^{n} A(t) \rtimes \Aut_{\Fun(\ccat^{op}, \abeliangroups)}(\pi_{n}X) \rightarrow \Aut_{\presheaves(\ccat)}(\B^{n} A).
\end{equation}

\begin{prop}
\label{proposition:automorphism_of_em_presheaves}
For any $A: \ccat^{op} \rightarrow \abeliangroups$, the map $\phi$ of (\ref{equation:map_into_automorphisms_of_a_presheaf}) is an equivalence of $\infty$-groups. 
\end{prop}

\begin{proof}
We argue as in the proof of \cref{lemma:maps_between_eilenberg_maclane_spaces}, which covered the case where $\ccat = \Delta^{0}$. By construction, we have a commutative diagram 
\[
\begin{tikzcd}
	{\B^{n} A(t) \rtimes \Aut_{\Fun(\ccat^{op}, \abeliangroups)}(A) } && {\Aut_{\presheaves(\ccat)}(\B^{n} A)} \\
	& {\B^{n}A(t)}
	\arrow["{p_{1}}"', from=1-1, to=2-2]
	\arrow["", from=1-3, to=2-2]
	\arrow["\phi", from=1-1, to=1-3]
\end{tikzcd}
\]
of spaces. Here, the right vertical arrow is the composite 
\[
\Aut_{\presheaves(\ccat)}(\B^{n} A) \rightarrow \Aut_{\spaces}(\B^{n} A(t)) \rightarrow \B^{n}A(t),
\]
with the first arrow is restriction to the value over the terminal object and the second is given by evaluating at a basepoint of $\B^{n}A(t)$. The left vertical arrow is the projection onto the first coordinate. 

It is enough to verify that we have an induced equivalence of fibres of the vertical arrows. The fibre of the right map is given by the space of automorphisms of $\B^{n}A$ which preserve the basepoint of $\B^{n}A(t)$, which since $t$ is terminal we can identify with automorphisms of $\B^{n}(A)$ as a pointed presheaf. As 
\[
\abeliangroups \simeq \Alg_{\mathbf{E}_{\infty}}(\emspaces_{n}) \simeq (\emspaces_{n})_{*},
\]
by \cref{prop:projectivity_of_em_spaces}, this is exactly the automorphism $\infty$-group
\[
\Aut_{\presheaves(\ccat)_{*}}(\B^{n} A) \simeq  \Aut_{\Fun(\ccat^{op}, \abeliangroups)}(A).
\]
That's also the fibre of the left vertical map, ending the argument. 
\end{proof}

\subsection{Spherical presheaves and their homotopy groups}

In this section, we study the $\infty$-category of spherical presheaves on a small $\infty$-category satisfying certain technical assumptions. We show that such presheaves behave somewhat like pointed spaces in the sense that we have a good theory of homotopy groups. The latter will be modules in an appropriate sense, mimicking the action of the fundamental group of a pointed space on the higher homotopy groups. 

\begin{defin}
If $\inftyc$ is a small $\infty$-category admitting finite coproducts, we say that a presheaf $X \colon \inftyc^{op} \rightarrow \spaces$ is \emph{spherical} or \emph{product-preserving} if it takes finite coproducts in $\inftyc$ to products of spaces. We denote the full subcategory of $\presheaves(\inftyc)$ spanned by spherical presheaves by $\presheaves_{\Sigma}(\inftyc)$. 
\end{defin} 
In the terminology of \cite[\S5.5.8]{lurie_higher_topos_theory}, $\presheaves_{\Sigma}(\inftyc)$ is the \emph{nonabelian derived category}. It has the following universal property:

\begin{prop}
\label{prop:universal_property_of_the_nonabelian_derived_category}
Let $\inftyd$ be an $\infty$-category admitting filtered colimits and geometric realizations and let
\[
\Fun^{\mathrm{sft}}(\presheaves_{\Sigma}(\inftyc), \inftyd) \subseteq \Fun(\presheaves_{\Sigma}(\inftyc), \inftyd)
\]
denote the full subcategory spanned by functors which preserve sifted colimits; that is, which preserve filtered colimits and geometric realizations. Then, the restriction 
\[
\Fun^{\mathrm{sft}}(\presheaves_{\Sigma}(\inftyc), \inftyd) \rightarrow \fun(\inftyc, \inftyd)
\]
along the Yoneda embedding $y \colon \inftyc \rightarrow \presheaves_{\Sigma}(\inftyc)$ is an equivalence of $\infty$-categories, with an explicit inverse given by left Kan extension along $y$. 
\end{prop}

\begin{proof}
This is \cite[5.5.8.15]{lurie_higher_topos_theory}.
\end{proof}

\begin{observation}
\label{observation:cocontinuity_of_the_derived_functor} 
In the context of \cref{prop:universal_property_of_the_nonabelian_derived_category}, a functor $f \in \Fun^{\mathrm{sft}}(\presheaves_{\Sigma}(\inftyc), \inftyd)$ is cocontinuous if and only if its restriction $f \circ y$ along the Yoneda embedding preserves finite coproducts \cite[5.5.8.15, (3)]{lurie_higher_topos_theory}.
\end{observation}

\begin{observation}
\label{observation:limits_colimits_in_spherical_presheaves}
The inclusion $\presheaves_{\Sigma}(\inftyc) \hookrightarrow \presheaves(\inftyc)$ preserves limits, filtered colimits and geometric realizations, hence those are computed levelwise in $\presheaves_{\Sigma}(\inftyc)$. Moreover, $\presheaves_{\Sigma}(\inftyc)$ is the smallest subcategory of $\presheaves(\inftyc)$ containing the image of the Yoneda embedding and closed under filtered colimits and geometric realizations, see \cite[5.5.8.10, 13, 14]{lurie_higher_topos_theory}. 
\end{observation}

\begin{rem}
Since truncation preserves products, if $X \in \presheaves(\inftyc)$ is spherical, so are all of its truncations $X_{\leq n}$, which in the presheaf $\infty$-category are computed levelwise. It follows that a spherical preshaef is an $n$-type if and only if it is valued in $n$-types in spaces. In other words, Postnikov towers in $\presheaves_{\Sigma}(\inftyc)$ are calculated levelwise. 
\end{rem}

To endow $\presheaves_{\Sigma}(\inftyc)$ with some additional properties, we make assumptions on the indexing $\infty$-category. These assumptions are relatively weak, and usually satisfied in practice; they are nevertheless crucial to our approach. Intuitively, they make $\inftyc$ behave like an $\infty$-category parametrizing some kind of operations, so that objects of $\presheaves_{\Sigma}(\inftyc)$ behave like algebras. 

\begin{assumption}
\label{assumption:assumption_on_c_to_have_good_presheaves}
We assume that 
\begin{enumerate}
    \item $\inftyc$ is a pointed $\infty$-category, 
    \item $\inftyc$ admits all finite coproducts and 
    \item every object $c \in \inftyc$ admits a cogroup structure in the homotopy category $h \inftyc$.
\end{enumerate}
\end{assumption}

\begin{notation}
To emphasize the pointed (but not necessarily additive) nature of $\inftyc$, we denote the coproduct of $c_{1}, c_{2} \in \inftyc$ by $c_{1} \vee c_{2}$. 
\end{notation}
The last assumption, while strange, is crucial to applications. The case relevant to $\Pi$-algebras is that of the $\infty$-category $\spheres$ of finite-wedges of positive-dimensional spheres, where all objects can be in fact given an $\infty$-cogroup structure by virtue of being suspensions, not only the structure of a cogroup in the homotopy category.

\begin{rem}
We assume that for each $c \in \inftyc$ some homotopy cogroup structure was chosen. However, our results do not depend on this choice. These homotopy cogroup structures are not assumed to be compatible with each other and maps in $\inftyc$ are not required to be homotopy cogroup maps.
\end{rem}
We first derive some of the basic consequences which \cref{assumption:assumption_on_c_to_have_good_presheaves} has on the $\infty$-category of spherical presheaves.

\begin{prop}
\label{prop:spherical_presheaves_are_pointed}
A presheaf $X \in \presheaves_{\Sigma}(\inftyc)$ can be uniquely lifted to a presheaf of pointed spaces. 
\end{prop}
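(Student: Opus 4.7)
The plan is to exhibit the canonical pointing as a map from the terminal presheaf, produced by combining the Yoneda lemma with sphericality of $X$. Under the running assumptions, $\inftyc$ admits a zero object $0$, which is simultaneously initial and terminal.

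First I would observe that because $0$ is initial, it is the empty coproduct in $\inftyc$; since $X$ is spherical it sends coproducts to products, so $X(0)$ is the empty product in $\spaces$, which is the terminal space $\ast$. Dually, because $0$ is terminal in $\inftyc$, the representable $y(0)$ satisfies $y(0)(c) = map(c,0) \simeq \ast$ for every $c \in \inftyc$, and hence $y(0)$ is the terminal object of $P(\inftyc)$ (recall that limits in $P(\inftyc)$ are computed levelwise).

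Combining these two observations, the Yoneda lemma yields $map_{P(\inftyc)}(y(0), X) \simeq X(0) \simeq \ast$, so there is an essentially unique map $\ast \simeq y(0) \to X$ in $P(\inftyc)$. Under the standard identification $\fun(\inftyc^{op}, \pspaces) \simeq \fun(\inftyc^{op}, \spaces)_{\ast/}$, a map from the terminal object of $P(\inftyc)$ into $X$ is precisely the datum of a lift of $X$ to a presheaf of pointed spaces, and contractibility of the space of such lifts is exactly the canonicity asserted in the statement.

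I do not anticipate a real obstacle: coherence of the basepoint across morphisms of $\inftyc$ is automatic from the Yoneda construction, so nothing has to be assembled by hand. It is worth noting that the homotopy cogroup hypothesis on objects of $\inftyc$ plays no role here; it will be needed only in subsequent results, where one wants to endow $X(c)$ with a group (rather than merely pointed) structure induced from the cogroup structure on $c$.
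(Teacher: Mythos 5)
Your proposal is correct and follows essentially the same route as the paper: both arguments use sphericality together with the zero object to identify $X(0)$ with the terminal space, and then use terminality of $0$ in $\inftyc$ to produce the coherent basepoint. The only cosmetic difference is that you obtain $X(0)\simeq\ast$ from the empty coproduct and package the pointing via the Yoneda lemma and the terminal presheaf $y(0)$, whereas the paper uses the binary coproduct $0\vee 0\simeq 0$; your remark that the cogroup hypothesis is not needed here is also accurate.
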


\begin{proof}
Let $0 \in \inftyc$ be the zero object. Since $X$ is product-preserving, we have a chain of equivalences
\[
X(0) \simeq X(0 \vee 0) \simeq X(0) \times X(0)
\]
and so $X(0)$ is contractible. This, combined with essentially unique maps $c \rightarrow 0$ for any $c \in \inftyc$, equips $X$ with the unique structure of a pointed presheaf. 
\end{proof}

\begin{cor}
The $\infty$-category $\presheaves_{\Sigma}(\inftyc)$ is pointed. In particular, any morphism $X \rightarrow Y$ of spherical presheaves has a well-defined fibre $F := X \times _{Y} \{ * \}$, which is again a spherical presheaf (since limits in $\presheaves_{\Sigma}(\inftyc)$ are computed levelwise). 
\end{cor}

\begin{prop}
\label{prop:spherical_presheaves_take_values_in_h_spaces}
Let $X$ be a spherical presheaf and let $c \in \inftyc$. Then $X(c)$ admits a structure of a group object in the homotopy category $h \spaces$ of spaces, functorial in maps of spherical presheaves. In particular, the basepoint component of $X(c)$ is a simple space.
\end{prop}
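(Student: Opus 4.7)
The plan is to transfer the homotopy cogroup structure on $c \in \inftyc$ along $X$ to equip $X(c)$ with a group structure in the homotopy category of spaces. By our standing assumption, every $c \in \inftyc$ carries a homotopy cogroup structure, i.e., we are given morphisms $\mu\colon c \to c \vee c$, $\iota\colon c \to c$ and $c \to 0$ in the homotopy category $h\inftyc$ satisfying the cogroup axioms. Since $X$ is product-preserving, the canonical map $X(c \vee c) \to X(c) \times X(c)$ is an equivalence, and by \textbf{Proposition \ref{prop:spherical_presheaves_are_pointed}} the space $X(0)$ is contractible. Applying $X$ to the cogroup data and composing with these identifications then yields a multiplication $X(c) \times X(c) \to X(c)$, an inversion $X(c) \to X(c)$ and a unit $\ast \to X(c)$, all in $h\spaces$.

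First I would check that the cogroup axioms in $h\inftyc$, being equalities of morphisms, translate under the contravariant product-preserving functor $X$ to the corresponding group axioms for the induced maps. This promotes $X(c)$ to a group object in $h\spaces$, which is the same data as a grouplike $H$-space structure. Functoriality in maps of spherical presheaves is then automatic: for $f\colon X \to Y$ in $P_{\Sigma}(\inftyc)$, the naturality of $f$ ensures that $f(c)\colon X(c) \to Y(c)$ intertwines the structure maps, so $f(c)$ is an $H$-map.

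For the final clause, I would appeal to the classical fact that any grouplike $H$-space has simple basepoint component: an Eckmann--Hilton argument shows that $\pi_1$ is abelian, while the action of $\pi_1$ on higher homotopy groups is trivial because any loop can be deformed to the constant one via left translation through the $H$-space multiplication. There is no serious obstacle in the argument; the only thing to keep straight is that the cogroup structure on $c$ lives only in $h\inftyc$, so what we obtain is a group object in $h\spaces$ rather than a coherently associative grouplike structure in $\spaces$ itself, which is exactly what the statement claims.
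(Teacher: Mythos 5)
Your proposal is correct and follows essentially the same route as the paper: apply $X$ to the chosen homotopy cogroup structure on $c$, use product-preservation (and pointedness of $X$) to identify the result as a group object in $h\spaces$, and then invoke the standard Eckmann--Hilton argument for simplicity of the basepoint component. The paper's proof is just a terser version of the same argument, so no further comment is needed.
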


\begin{proof}
Since $X$ is spherical, a choice of a cogroup structure on $c \in h \inftyc$ induces a structure of a group object on $X(c) \in h \spaces$. In particular, $X(c)$ is an $H$-space and hence so is its basepoint component. It follows that the latter is simple by the Eckmann-Hilton argument. 
\end{proof}
Our main goal is to show that similarly to pointed, connected spaces, spherical presheaves are controlled by discrete objects, namely their homotopy groups. These will be algebras in the following sense: 

\begin{defin}
\label{defin:c_algebra}
A \emph{$\inftyc$-algebra} $X$ is a discrete object of $\presheaves_{\Sigma}(\inftyc)$; that is, a spherical presheaf valued in discrete spaces. 
\end{defin}

\begin{rem}
If $\inftyc = \spheres \subseteq \spaces_{*}$ is the $\infty$-category of wedges of positive-dimensional spheres, \cref{defin:c_algebra} recovers the notion of a $\Pi$-algebra. In general, we get a mild generalization of algebras in the sense of Lawvere \cite{lawvere1963functorial}, the generalization coming from the fact that we do not require that all objects of $\inftyc$ are coproducts of a single generating object. 
\end{rem}

Note that the as a full subcategory of $\presheaves_{\Sigma}(X)$, $\inftyc$-algebras form an ordinary category. A basic example is given by homotopy groups of a spherical presheaf:

\begin{defin}
\label{defin:homotopy_groups_of_a_spherical_presheaf}
If $X \in \presheaves_{\Sigma}(\inftyc)$ and $n \geq 0$, then its \emph{$n$-th homotopy $\inftyc$-algebra}, denoted by $\pi_{n}(X)$, is defined by
\[
(\pi_{n}(X))(c) := \pi_{n}(X(c)).
\]
In the particular case of $n = 0$, we will refer to 
\[
\pi_{0}(X) \simeq X_{\leq 0} 
\]
as the \emph{underlying $\inftyc$-algebra}. 
\end{defin}
Notice that this is well-defined since by \cref{prop:spherical_presheaves_are_pointed}, a spherical presheaf canonically takes values in pointed spaces. 

\begin{rem}[Group structure on homotopy algebras]
Since for every $c \in \inftyc$, the basepoint component of $X(c)$ is a simple space by \cref{prop:spherical_presheaves_take_values_in_h_spaces}, $\pi_{n}X$ is canonically an abelian group object in spherical presheaves for each $n \geq 1$. 
\end{rem}

The standard properties of homotopy groups of pointed spaces yield analogous properties of homotopy algebras of \cref{defin:homotopy_groups_of_a_spherical_presheaf}. For example, a fibre sequence of spherical presheaves leads to a long levelwise exact sequence of homotopy algebras. Moreover, homotopy algebras detect equivalences, as the following shows:

\begin{prop}
A map $X \rightarrow Y$ of spherical presheaves is an equivalence if and only if it induces isomorphisms $\pi_{n} X \simeq \pi_{n} Y$ on homotopy $\inftyc$-algebras for each $n \geq 0$. 
\end{prop}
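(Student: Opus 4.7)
The plan is as follows. By \textbf{Remark \ref{rem:limits_colimits_in_spherical_presheaves}}, the inclusion $P_{\Sigma}(\inftyc) \hookrightarrow P(\inftyc)$ creates limits and, in particular, equivalences; and equivalences in the presheaf category are detected levelwise. Hence $f$ is an equivalence in $P_{\Sigma}(\inftyc)$ iff each $f(c)\colon X(c) \to Y(c)$ is an equivalence of spaces. Similarly, by \textbf{Definition \ref{defin:homotopy_groups_of_a_spherical_presheaf}}, $f$ induces an isomorphism of homotopy $\inftyc$-algebras iff each $f(c)$ induces an isomorphism on $\pi_{n}$ at the distinguished basepoint provided by \textbf{Proposition \ref{prop:spherical_presheaves_are_pointed}}, for every $c \in \inftyc$ and every $n \geq 0$. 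The problem thus reduces to checking, for a single $c$, that $f(c)$ is a weak equivalence iff it is an isomorphism on $\pi_{n}$ at the distinguished basepoint for all $n \geq 0$.

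The forward direction is immediate from the definition of weak equivalence. For the converse I would invoke \textbf{Proposition \ref{prop:spherical_presheaves_take_values_in_h_spaces}}: both $X(c)$ and $Y(c)$ are grouplike $H$-spaces, and $f(c)$ is a map of such because the functoriality clause in that proposition says that the $H$-structure is natural in maps of spherical presheaves. In a grouplike $H$-space, left multiplication by any element is a weak equivalence between path components; moreover, any $H$-map intertwines these translations. Consequently, the assumption that $f(c)$ is a bijection on $\pi_{0}$ and an isomorphism on each $\pi_{n}$ at the distinguished basepoint promotes, by conjugating with translations, to an isomorphism on $\pi_{n}$ at every basepoint. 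By the classical Whitehead theorem this forces $f(c)$ to be a weak equivalence.

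The only mild subtlety is the multiple-basepoints issue: a priori the homotopy $\inftyc$-algebra only records information at the distinguished basepoint, whereas a weak equivalence requires iso on $\pi_{n}$ at every basepoint. The grouplike $H$-space structure on the values resolves this, and no other ingredients are needed; the argument is really a bookkeeping exercise once one combines \textbf{Remark \ref{rem:limits_colimits_in_spherical_presheaves}} with \textbf{Proposition \ref{prop:spherical_presheaves_take_values_in_h_spaces}}.
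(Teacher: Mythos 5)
Your proposal is correct and follows essentially the same route as the paper: reduce to a levelwise statement, observe via \textbf{Proposition \ref{prop:spherical_presheaves_take_values_in_h_spaces}} that each $f(c)$ is a map of grouplike $H$-spaces, use translations to propagate the basepoint information to all components, and conclude by the Whitehead theorem. Your write-up merely spells out the multiple-basepoints step in more detail than the paper does.
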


\begin{proof}
The forward implication is clear, so let us prove its converse. A map $X \rightarrow Y$ is an equivalence if and only if it is so levelwise, so it's enough to show that $X(c) \rightarrow Y(c)$ is an equivalence for any $c \in \inftyc$. By \cref{prop:spherical_presheaves_take_values_in_h_spaces}, this admits a structure of a map of group objects in the homotopy category $h \spaces$ of spaces and since we assume that this map is a $\pi_{0}$-isomorphism, it is an equivalence if and only if it restricts to one on basepoint components. The restriction is an equivalence by the Whitehead's theorem, as it is a map of connected spaces which is an isomorphism on positive-dimensional homotopy groups. 
\end{proof}
Recall that in the case of spaces, the homotopy groups carried an additional structure of a module over the fundamental group, as we discussed in \S\ref{subsection:action_of_the_fundamental_group}. We now describe a generalization of this construction to spherical presheaves. 

We observe that the construction of relative homotopy groups of \cref{defin:relative_homotopy_groups} can be readily performed levelwise, leading to discrete abelian group objects in presheaves.  This relative homotopy group is a module (that is, a local coefficient system) over $X(c)$. Using functoriality, these relative homotopy groups assemble into a module over $X$ in the following way: 

\begin{defin}
\label{defin:relative_homotopy_groups_of_presheaves}
Let $Y \rightarrow X$ be a morphism in $\presheaves(\inftyc)$ such that for each $c \in \inftyc$, $Y(c) \rightarrow X(c)$ has simple fibres. For each $n \geq 1$, we call the presheaf
\[
\pi_{n}(Y \rightarrow X) \in \presheaves(\ccat_{/X})
\]
defined by
\[
\pi_{n}(Y \rightarrow X)(\eta \colon y(c) \rightarrow X) := \pi_{n}(F_{\eta}),
\]
where $F_{\eta} := Y(c) \times_{X(c)} \{ \eta \}$, the $n$-th \emph{relative homotopy group}. It is a (discrete) abelian group object in $\presheaves(\ccat_{/X})$
\end{defin}
We will be only interested in the relative homotopy groups the situation where $Y \rightarrow X$ is a morphism of spherical presheaves inducing an isomorphism $\pi_{0}X \simeq \pi_{0}Y$, so that the fibre is levelwise connected and hence levelwise simple by \cref{prop:spherical_presheaves_take_values_in_h_spaces}. In this case, the relative homotopy groups of \cref{defin:relative_homotopy_groups_of_presheaves} are modules over $X$ in the following sense: 

\begin{defin}
\label{defin:module_over_a_spherical_presheaf}
Let $X \in \presheaves_{\Sigma}(\inftyc)$ be a spherical presheaf. A \emph{module} $E \rightarrow X$ is a discrete abelian group object in $\presheaves_{\Sigma}(\inftyc) _{/X}$, the $\infty$-category of spherical presheaves over $X$. 

If $E \rightarrow X$ is a module, then its \emph{underlying abelian group object} is the fibre $A := E \times_{X} \mathrm{pt}$. It is an abelian group object in $\inftyc$-algebras. 
\end{defin}

\begin{rem}
Informally, one can think of the map $E \rightarrow A$ as encoding an action of $\Omega X$ on the underlying abelian group object $A$.
\end{rem}

\begin{rem}
\label{rem:modules_over_inftyc_algebras}
If $E \rightarrow X$ is a module and $X$ is a $\inftyc$-algebra (that is, if it is valued in discrete spaces), then so is $E$. In this case, \cref{defin:module_over_a_spherical_presheaf} reduces to the standard definition of a module over an algebra, cf. the case of $\Pi$-algebras discussed in \cite[\S4.9]{realization_space_of_a_pi_algebra}. 

We show below in \cref{theorem:equivelence_of_modules_with_zero_truncation} that a module over a spherical presheaf $X$ is equivalent data as one over  its underlying $\inftyc$-algebra $X_{\leq 0}$, so that our definition is in fact at the same level of generality as the classical one. 
\end{rem}

The following shows that sphericity of a presheaf can be detected at the level of homotopy groups, by asking whether they are modules in the sense of \cref{defin:module_over_a_spherical_presheaf}:

\begin{lemma}
\label{lem:sphericity_detected_by_modules}
Let $X \in \presheaves_{\Sigma}(\inftyc)$ be a spherical presheaf $Y \in \presheaves(\inftyc)_{/X}$ be a presheaf such that for every $c \in \inftyc$, $Y(c) \rightarrow X(c)$ has simple fibres. Then the following are equivalent: 
\begin{enumerate}
    \item for any $n \geq 1$ $\pi_{n}(Y \rightarrow X) \in \presheaves(\ccat_{/X})$ is spherical (and hence a module in the sense of \cref{defin:module_over_a_spherical_presheaf}) or
    \item $Y$ is spherical. 
\end{enumerate}
\end{lemma}

\begin{proof}
Let $c_{1}, c_{2} \in \inftyc$ and let $c_{1} \vee c_{2}$ be their coproduct together with structure maps $i_{1} \colon c_{1} \hookrightarrow c_{1} \vee c_{2}$ and $i_{2} \colon c_{2} \rightarrow c_{1} \vee c_{2}$. The presheaf $Y$ is spherical if and only if the map 
\[
Y(c_{1} \vee c_{2}) \rightarrow Y(c_{1}) \times Y(c_{2})
\]
induced by $i_{1}^{*}$ and $i_{2}^{*}$ is an equivalence. Consider the commutative diagram 

\begin{center}
	\begin{tikzpicture}
		\node (TL) at (0, 1.5) {$ Y(c_{1} \vee c_{2}) $};
		\node (BL) at (0, 0) {$ X(c_{1} \vee c_{2})   $};
		\node (TR) at (3.5, 1.5) {$ Y(c_{1}) \times Y(c_{2}) $};
		\node (BR) at (3.5, 0) {$ X(c_{1}) \times X(c_{2}) $};
		
		\draw [->] (TL) to (TR);
		\draw [->] (TL) to (BL);
		\draw [->] (TR) to (BR);
		\draw [->] (BL) to (BR);
	\end{tikzpicture},
\end{center}
where the bottom map is an equivalence by the assumption that $X$ is spherical. Thus, $Y$ is spherical if and only if the upper horizontal arrow induces an equivalence on fibres of the vertical ones. This is equivalent to
\begin{equation}
\label{equation:map_between_fibres_in_sphericity_criterion}
F_{x} \rightarrow F_{i_{1}^{*}x} \times F_{i_{2}^{*} x},
\end{equation}
being an equivalence for all $x \in X(c_{1} \vee c_{2})$, where $F$ denotes fibres over the corresponding points. 

As we assume that the fibres are simple, in particular connected, (\ref{equation:map_between_fibres_in_sphericity_criterion}) is an equivalence if and only if for all $n \geq 1$, the map
\begin{equation}
\label{equation:map_between_homotopy_groups_of_fibres_in_sphericity_condition}
\pi_{n}(F_{x}) \rightarrow \pi_{n}(F_{i_{1}^{*}x}) \times \pi_{n}(F_{i_{2}^{*} x})
\end{equation}
is an isomorphism. These are, by definition, the fibres in an analogous diagram for the map of presheaves $\pi_{n}(Y \rightarrow X) \rightarrow X$, and it follows that (\ref{equation:map_between_homotopy_groups_of_fibres_in_sphericity_condition}) is an equivalence for all $x \in X(c_{1} \vee c_{2})$ if and only if $\pi_{n}(Y \rightarrow X)$ is spherical. This ends the argument. 
\end{proof}

We move on towards the equivalence between the category of modules over a spherical presheaf and its underlying $\inftyc$-algebra promised in \cref{rem:modules_over_inftyc_algebras}. We will first need a few lemmas. 

\begin{lemma}
\label{lemma:truncation_comparison_map_for_modules_an_equivalence}
Let $X$ be a spherical presheaf and let $(E \rightarrow X) \in \abeliangroups(\presheaves_{\Sigma}(C_{/X}))$ be a module. Then the canonical comparison map 
\[
E \rightarrow X \times_{X_{\leq 0}} E_{\leq 0}
\]
is an equivalence. 
\end{lemma}

\begin{proof}
This is a levelwise statement, so it is enough to show that for any $c \in \inftyc$, the canonical comparison map 
\[
E(c) \rightarrow X(c) \times_{X_{\leq 1}(c)} E_{\leq 1}(c)
\] 
is an equivalence of spaces. As this is a map of spaces over $X(c)$, it is enough to verify that it induces an equivalence between the fibres. As this map can be lifted to a morphism of group objects in the homotopy category of spaces by \cref{prop:spherical_presheaves_take_values_in_h_spaces}, it is enough to check that the map between the fibres over the basepoint is an equivalence. 

Since $E(c) \rightarrow X(c)$ admits a zero section $X(c) \rightarrow E(c)$ as part of the module structure, the long exact sequences of homotopy groups of the fibres split into short exact sequences and yield isomorphisms
\[
\pi_{0} (\mathrm{fib}(E_{\leq 0}(c) \rightarrow X_{\leq 0}(c))) \simeq \mathrm{ker}(\pi_{0}E_{\leq 0}(c) \rightarrow \pi_{0} X_{\leq 0}(c)) \simeq \pi_{0} (\mathrm{fib}(E(c) \rightarrow X(c)))
\]
so that 
\[
\mathrm{fib}(E(c) \rightarrow X(c)) \rightarrow \mathrm{fib}(E_{\leq 0}(c) \rightarrow X_{\leq 0}(c))
\]
is an isomorphism on $\pi_{0}$. As both sides are discrete, the right hand side as it is a fibre of a map of discrete spaces, and the left hand side as it is a fibre of a discrete abelian group object, we deduce that this map is an equivalence, ending the argument. 
\end{proof}

\begin{lemma}
\label{lemma:0_truncation_pullback_comparison_is_an_equivalence}
Let $X$ be a spherical presheaf and let $(E \rightarrow X) \in \abeliangroups(\presheaves_{\Sigma}(C_{/X}))$ be a module. Then the canonical map 
\begin{equation}
\label{equation:comparison_map_with_pullback_of_truncations}
(E \times_{X} E)_{\leq 0} \rightarrow E_{\leq 0} \times_{X_{\leq 0}} E_{\leq 0}
\end{equation}
is an equivalence. Thus, $E_{\leq 0} \rightarrow X_{\leq 0}$ has a canonical structure of a module. 
\end{lemma}

\begin{proof}
As $X \rightarrow X_{\leq 0}$ is levelwise surjective on path components and hence an effective epimorphism of presheaves, it is enough to verify that the base-change 
\begin{equation}
\label{equation:base_change_of_comparison_map}
X \times_{X_{\leq 0}} (E \times_{X} E)_{\leq 0} \rightarrow X \times_{X_{\leq 0}} (E_{\leq 0} \times_{X_{\leq 0}} E_{\leq 0})
\end{equation}
is an equivalence. For the left hand side of (\ref{equation:base_change_of_comparison_map}), as $E \times_{X} E$ is a module over $X$, \cref{lemma:truncation_comparison_map_for_modules_an_equivalence} implies that 
\[
X \times_{X_{\leq 0}} (E \times_{X} E)_{\leq 0} \simeq E \times_{X} E. 
\]
For the right hand side of (\ref{equation:base_change_of_comparison_map}), two applications of \cref{lemma:truncation_comparison_map_for_modules_an_equivalence} yield 
\[
X \times_{X_{\leq 0}} (E_{\leq 0} \times_{X_{\leq 0}} E_{\leq 0}) \simeq (X \times_{X_{\leq 0}} E_{\leq 0}) \times_{X} (X \times_{X_{\leq 0}} E_{\leq 0}) \simeq E \times_{X} E.
\]
Chasing through these canonical maps, we see that these identifications are compatible with each other, so that (\ref{equation:base_change_of_comparison_map}) is an equivalence as needed, and hence so is (\ref{equation:comparison_map_with_pullback_of_truncations}). 

The second claim is that $E_{\leq 0} \rightarrow X_{\leq 0}$ has a canonical structure of a module. By what we've shown above, the limit comparison map of (\ref{equation:comparison_map_with_pullback_of_truncations}) is an equivalence and hence the construction $E \mapsto E_{\leq 0}$ defines a strongly symmetric monoidal functor 
\[
\abeliangroups(\presheaves_{\Sigma}(\inftyc _{/X})) \rightarrow \presheaves_{\Sigma}(\inftyc _{/X_{\leq 0}})
\]
where we consider both sides with the cartesian symmetric monoidal structure. It follows that the functor has a canonical lift to 
\[
\abeliangroups(\presheaves_{\Sigma}(\inftyc _{/X})) \simeq \abeliangroups(\abeliangroups(\presheaves_{\Sigma}(\inftyc _{/X}))) \rightarrow \abeliangroups(\presheaves_{\Sigma}(\inftyc _{/X_{\leq 0}})),
\]
which is what we wanted to show. 
\end{proof}

\begin{warning}
For a general map of spaces $Y \rightarrow X$, even one with discrete fibres, it is not in general true that
\[
(Y \times_{X} Y)_{\leq 0} \rightarrow Y_{\leq 0} \times_{X_{\leq 0}} Y_{\leq 0}
\]
is an equivalence, giving \cref{lemma:0_truncation_pullback_comparison_is_an_equivalence} some bite. For example, for a non-zero group $G$ we have
\[
G \simeq G_{\leq 0} \simeq (\mathrm{pt} \times_{\B G} \mathrm{pt}) _{\leq 0} \not\simeq (\mathrm{pt}_{\leq 0} \times_{(\B G)_{\leq 0}} \mathrm{pt}_{\leq 0}) \simeq \mathrm{pt} \times_{\mathrm{pt}} \mathrm{pt} \simeq \mathrm{pt}.
\]
Here, the issue is that $Y \rightarrow X$ does not admit a section. 

For a more complicated example, consider $C_{2} \simeq \Aut_{\abeliangroups}(\mathbb{Z})$ and the map $(\mathbb{Z})_{h C_{2}} \rightarrow \B C_{2}$, which is an abelian group object in $\spaces_{/ \B C_{2}}$ classified by the functor $\B C_{2} \rightarrow \abeliangroups$ picking up $\mathbb{Z}$ with the tautological action of its automorphism group. Then 
\[
(\mathbb{Z} \times \mathbb{Z})_{C_{2}} \simeq (\mathbb{Z}_{h C_{2}} \times_{\B C_{2}} \mathbb{Z}_{h C_{2}})_{\leq 0} \not\simeq (\mathbb{Z}_{h C_{2}})_{\leq 0} \times_{(\B C_{2})_{\leq 0}} (\mathbb{Z}_{h C_{2}})_{\leq 0} \simeq \mathbb{Z}_{C_{2}} \times \mathbb{Z}_{C_{2}}, 
\]
where $C_{2}$ denotes the classical orbits of the action. A counterexample of this form is avoided in the proof of \cref{lemma:0_truncation_pullback_comparison_is_an_equivalence} through the use of grouplike $H$-space structures. Here, $\mathbb{Z}_{h C_{2}}$ cannot be made into a grouplike $H$-space, as it has non-equivalent path components. 
\end{warning}

\begin{theorem}
\label{theorem:equivelence_of_modules_with_zero_truncation}
If $X \in \presheaves_{\Sigma}(\inftyc)$ is a spherical presheaf, then functors
\[
L(E \rightarrow X) := (E_{\leq 0} \rightarrow X_{\leq 0})
\]
and 
\[
R(F \rightarrow X_{\leq 0}) := (X \times_{X_{\leq 0}} F \rightarrow X)
\]
define an adjoint equivalence
\[
L \colon \Mod(X) \rightleftarrows \Mod(X_{\leq 0}) \colon R
\]
between the categories of modules over $X$ and its underlying $\inftyc$-algebra $X_{\leq 0}$. 
\end{theorem}

\begin{proof}
First, note that $L$ gives a functor between module categories as a consequence of \cref{lemma:0_truncation_pullback_comparison_is_an_equivalence}. The counit map of $E \in \Mod(X)$ is given by canonical the comparison map 
\[
E \rightarrow X \times_{X_{\leq 0}} E_{\leq 0}
\]
which is an equivalence by \cref{lemma:0_truncation_pullback_comparison_is_an_equivalence}. Thus, to prove that $L \dashv R$ is an adjoint equivalence, it is enough to check that $R$ is conservative. This is clear, since it is given by pulling back along an effective epimorphism $X \rightarrow X_{\leq 0}$. 
\end{proof}

Note that as a consequence of \cref{theorem:equivelence_of_modules_with_zero_truncation}, we see that if $Y \rightarrow X$ is a morphism of spherical presheaves, then the relative homotopy groups of \cref{defin:relative_homotopy_groups_of_presheaves} groups can be identified with modules over the underlying $\inftyc$-algebra, and hence form a purely algebraic invariant. A particularly important example is that of absolute homotopy groups:

\begin{example}
\label{example:action_on_absolute_homotopy_groups_for_spherical_presheaves}
If $X$ is a spherical presheaf, then for any $n \geq 1$ the abelian $\inftyc$-algebra underlying $X_{\leq 0}$-module $\pi_{n}(X \rightarrow X_{\leq 0})$ can be identified with $\pi_{n} X$. This module structure can be thought of as encoding the action of $\pi_{0} X$ on the higher homotopy groups. 
\end{example}

\begin{warning}
In the context of \cref{example:action_on_absolute_homotopy_groups_for_spherical_presheaves}, beware the degree shift from the case of spaces reviewed in \S\ref{subsection:action_of_the_fundamental_group}, where it was the $\pi_{1}$ which was acting on the higher homotopy groups, rather than $\pi_{0}$!. This degree shift stems from the semi-algebraic behaviour of the $\infty$-category of spherical presheaves and crucially depends on \cref{assumption:assumption_on_c_to_have_good_presheaves}. 
\end{warning}

\subsection{Postnikov invariants of spherical presheaves} 

In this section analogous to \S\ref{subsection:postnikov_invariants_of_spaces}
and \S\ref{subsection:action_of_the_fundamental_group}, we develop the theory of Postnikov invariants of spherical presheaves. As in the case of spaces, the main technical tool will be an appropriate Eilenberg-MacLane object. The following is the presheaf analogue of \cref{defin:eilenberg_maclane_space}: 

\begin{defin}
\label{defin:eilenberg_maclane_presheaf}
Let $\Lambda$ be a $\inftyc$-algebra, $M$ be a $\Lambda$-module and $n \geq 0$. The \emph{Eilenberg-MacLane presheaf} of type $(\Lambda, M, n+1)$, denoted by $\B^{\inftyc}_{\Lambda}(M, n+1)$, is defined by the cartesian diagram
\begin{center}
	\begin{tikzpicture}
		\node (TL) at (0, 1.5) {$ \B^{\inftyc}_{\Lambda}(M, n+1)$};
		\node (BL) at (0, 0) {$ \Lambda  $};
		\node (TR) at (3, 1.5) {$ \mathcal{M}_{\inftyc}(\emspaces_{n}) $};
		\node (BR) at (3, 0) {$ \mathcal{M}_{\inftyc}(\abeliangroups) $};
		
		\draw [->] (TL) to (TR);
		\draw [->] (TL) to (BL);
		\draw [->] (TR) to (BR);
		\draw [->] (BL) to (BR);
	\end{tikzpicture},
\end{center}
where 
\begin{enumerate}
    \item $\mathcal{M}_{\inftyc}(\abeliangroups)$ is the moduli presheaf for abelian groups of \cref{prop:existence_of_moduli_of_discrete_abelian_groups}, 
    \item $\mathcal{M}_{\inftyc}(\emspaces_{n})$ is the moduli presheaf for abelian Eilenberg-MacLane spaces of degree $n$ of \cref{prop:existence_of_moduli_of_em_spaces_in_presheaves},
    \item the bottom horizontal map is the classifying map for the module $M$ and 
    \item the right vertical map is induced by taking an Eilenberg-MacLane space to its $n$-th homotopy group, which is well-defined by simplicity. 
\end{enumerate}
\end{defin}

\begin{rem}
Note that, a priori, $\B^{\inftyc}_{\Lambda}(M, n+1)$ is an object of $\Fun(\inftyc^{op}, \largespaces)$, the $\infty$-category of presheaves of large spaces. As a consequence of \cref{cor:eilenberg_maclane_presheaf_is_spherical} below, it is in fact an object of $\presheaves_{\Sigma}(\inftyc)$.
\end{rem}

\begin{observation}[Universal property]
\label{observation:universal_property_of_em_presheaf}
As in the case of spaces discussed in \cref{observation:moduli_interpretation_of_eilenberg_maclane_spaces}, by construction the presheaf $\B^{\inftyc}_{\Lambda}(M, n+1)$ has a universal property. Namely, for any $Y \in \presheaves_{\Sigma}(\inftyc)$, the space $\map(Y, \B^{\inftyc}(M, n+1))$ is the $\infty$-groupoid of the following triples: 

\begin{enumerate}
\item a map of presheaves $X \rightarrow Y$ levelwise fibred in Eilenberg-MacLane spaces of degree $n$, 
\item a morphism $\delta \colon Y \rightarrow \Lambda$ and 
\item an isomorphism $\epsilon \colon  \pi_{n}(X \rightarrow Y) \simeq \delta^{*}M$ of modules over $Y$. 
\end{enumerate}
Notice that \cref{lem:sphericity_detected_by_modules} implies that in this situation $X$ is also spherical.
\end{observation}

Using the universal property of \cref{observation:universal_property_of_em_presheaf}, we develop below a classification result for $n$-types in terms of maps into the Eilenberg-MacLane presheaf. However, such a result wouldn't be very useful if the presheaf $B_{\Lambda}^{\inftyc}(M, n+1)$ was very complicated. This is not the case; in fact, its homotopy groups admit a simple description analogous to \cref{rem:homotopy_of_an_eilenberg_maclane_space}:

\begin{prop}
\label{prop:description_of_the_eilenberg_maclane_presheaf}
The map $\B^{\inftyc}_{\Lambda}(M, n+1) \rightarrow \Lambda$ has the following three poperties: 
\begin{enumerate}
\item it is levelwise fibred in Eilenberg-MacLane spaces of degree $(n+1)$,
\item there exists an isomorphism $\pi_{n+1}(B^{\inftyc}_{\Lambda}(M, n+1) \rightarrow \Lambda) \simeq M$ of $\Lambda$-modules and 
\item there exists a section $\Lambda \rightarrow \B^{\inftyc}_{\Lambda}(M, n+1)$.
\end{enumerate}
Moreover, any map with these three properties is equivalent to $\B^{\inftyc}_{\Lambda}(M, n+1)$ as an object of $\presheaves_{\Sigma}(\inftyc)_{/\Lambda}$. 
\end{prop}

\begin{proof}
As $\B^{\inftyc}_{\Lambda}(M, n+1)$ is defined as a pullback along the map 
\begin{equation}
\label{equation:map_of_big_moduli_spaces}
\mathcal{M}_{\inftyc}(\emspaces_{n}) \rightarrow \mathcal{M}_{\inftyc}(\abeliangroups),
\end{equation}
it is enough to show that the latter has the three analogous properties. After evaluating at $c \in \inftyc$, (\ref{equation:map_of_big_moduli_spaces}) is equivalent to the map 
\begin{center}
$\fun((\inftyc_{/c})^{op}, \emspaces_{n})^{\cong} \rightarrow \fun((\inftyc_{/c})^{op}, \abeliangroups)^{\cong}$
\end{center}
induced by taking $\pi_{n}$. This clearly admits a section given by $\B^{n}(-)$, the iterated classifying space functor. This shows (3). 

Now suppose that we are given a point
\[
M \in \fun((\inftyc_{/c})^{op}, \abeliangroups)^{\cong} \simeq \mathcal{M}_{\inftyc}(\abeliangroups)(c).
\]
We have to show that the fibre over $M$ is an Eilenberg-MacLane space of degree $(n+1)$. This fibre can be described as the $\infty$-groupoid of pairs 
\[
(X \in \fun((\inftyc_{/c})^{op}, \emspaces_{n})^{\cong}, \phi \colon \pi_{n} X \simeq M)
\]
of a presheaf together with a trivialization of its $\pi_{n}$. By \cref{corollary:presheaves_of_em_spaces_described_by_homotopy_groups_up_to_equivalence}, any two such $X$ are equivalent, as $\inftyc_{/c}$ has a terminal object, and by \cref{proposition:automorphism_of_em_presheaves} the $\infty$-group of automorphisms acting trivially on $\pi_{n}$ of any such presheaf is equivalent to $\B^{n}M(\mathrm{id}_{c})$. Thus, the fibre is a space of type $\B^{n+1}M(\mathrm{id}_{c})$, which shows $(1)$ and $(2)$. 

To see that $\mathcal{M}_{\inftyc}(\emspaces_{n}) \rightarrow \mathcal{M}_{\inftyc}(\abeliangroups)$ is, up to equivalence, the only map satisfying these two properties, observe that a choice of section equips the levelwise fibres with a choice of a basepoint. As a consequence of  \cref{prop:projectivity_of_em_spaces}, the functor $\pi_{n+1} \colon (\emspaces_{n+1})_{*} \rightarrow \abeliangroups$ is an equivalence. It follows that maps with fibres given by Eilenberg-MacLane spaces of degree $(n+1)$ and equipped with a choice of a section are uniquely determined by their relative $\pi_{n+1}$. This ends the argument. 
\end{proof}

\begin{cor}
\label{cor:eilenberg_maclane_presheaf_is_spherical}
The presheaf $\B^{\inftyc}_{\Lambda}(M, n+1)$ is spherical and its only non-zero homotopy groups are $\pi_{0} \simeq \Lambda$ and $\pi_{n+1} \simeq M$. 
\end{cor}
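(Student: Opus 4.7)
The plan is to read off all three claims from the defining pullback square, using the universal property of $B^{\inftyc}_\Lambda(M, n+1)$ spelled out immediately before the proposition together with the computation of the automorphism groups of Eilenberg-MacLane-valued presheaves from Proposition \ref{prop:automorphisms_of_eilenberg_maclane_presheaves}.

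First I would construct the section. Since $M$ is a module over $\Lambda$ and the zero object of $\inftyc$ provides canonical basepoints via Proposition \ref{prop:spherical_presheaves_are_pointed}, one may apply the iterated delooping $B^n(-)$ levelwise to $M$, obtaining an Eilenberg-MacLane valued presheaf $B^n M \to \Lambda$ with $\pi_n(B^n M) \simeq M$ as modules. Its classifying map $\Lambda \to \mathcal{M}_\inftyc(\euscr{EM}_n)$ lifts the bottom map of the defining square along $\mathcal{M}_\inftyc(\euscr{EM}_n) \to \mathcal{M}_\inftyc(\abeliangroups)$, and so the universal property of the pullback produces a section $\Lambda \to B^{\inftyc}_\Lambda(M, n+1)$.

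Next I would identify the local fibers. By naturality of pullbacks, the local fiber of $B^{\inftyc}_\Lambda(M, n+1) \to \Lambda$ at a point of $\Lambda(c)$ coincides with the local fiber of $\mathcal{M}_\inftyc(\euscr{EM}_n) \to \mathcal{M}_\inftyc(\abeliangroups)$ over the corresponding abelian-group-valued presheaf $M|_c$ on $\inftyc_{/c}$, which in turn is the classifying space of automorphisms of an EM presheaf on $\inftyc_{/c}$ with $\pi_n \simeq M|_c$. Since $\inftyc_{/c}$ has a terminal object, Proposition \ref{prop:automorphisms_of_eilenberg_maclane_presheaves} identifies this automorphism group as $B^n(M|_c(t)) \rtimes \text{Aut}(M|_c)$, so the component of the local fiber selected by the section above is itself an Eilenberg-MacLane object whose $\pi_{n+1}$ recovers $M|_c$. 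Packaging these identifications naturally in $c$ and using the section to compute the relative homotopy groups of Definition \ref{defin:relative_homotopy_groups} gives the module isomorphism $\pi_{n+1}(B^{\inftyc}_\Lambda(M, n+1) \to \Lambda) \simeq M$. For the uniqueness, any sectioned EM fibering $E \to \Lambda$ with relative $\pi_{n+1} \simeq M$ provides, by applying $\Omega^n$ to its pointed fibers, exactly the data of the universal property of $B^{\inftyc}_\Lambda(M, n+1)$ over $Y = \Lambda$ with $\alpha = \text{id}$; this produces a map $\Lambda \to B^{\inftyc}_\Lambda(M, n+1)$ under which $E$ arises as the pullback.

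The main technical obstacle is the degree-shift between the formulation of $B^{\inftyc}_\Lambda(M, n+1)$ (whose universal property speaks about $\pi_n$ of objects in the moduli $\mathcal{M}_\inftyc(\euscr{EM}_n)$) and the statement of the proposition (which refers to $\pi_{n+1}$ of the pointed fibers of the total map). This shift is mediated entirely by the section, which provides the basepoints needed to turn $\pi_n$-information on EM objects into $\pi_{n+1}$-information on their classifying spaces, with Proposition \ref{prop:automorphisms_of_eilenberg_maclane_presheaves} providing the needed homotopy computations.
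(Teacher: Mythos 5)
Your proposal is correct and follows essentially the same route as the paper: both construct the section from $B^{n}(-)$, identify the (local) fibres of the universal family $\mathcal{M}_{\inftyc}(\euscr{EM}_{n}) \rightarrow \mathcal{M}_{\inftyc}(\abeliangroups)$ as spaces of type $B^{n+1}(M(id_{c}))$ via \textbf{Proposition \ref{prop:automorphisms_of_eilenberg_maclane_presheaves}}, and obtain uniqueness from the canonical $E_{\infty}$/delooping structure on pointed Eilenberg-MacLane objects supplied by \textbf{Lemma \ref{lem:projectivity_of_eilenberg_maclane_spaces}}. The only step left implicit --- exactly as in the paper's own "it is immediate" --- is the passage from this description of the sectioned fibration $B^{\inftyc}_{\Lambda}(M, n+1) \rightarrow \Lambda$ to the stated corollary, namely sphericity via \textbf{Lemma \ref{lem:sphericity_detected_by_modules}} and the computation of the homotopy groups from the levelwise long exact sequence over the discrete base $\Lambda$.
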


\begin{proof}
The description of homotopy groups is immediate from \cref{prop:description_of_the_eilenberg_maclane_presheaf}. Sphericity of $\B^{\inftyc}_{\Lambda}(M, n+1)$ follows from \cref{lem:sphericity_detected_by_modules}. 
\end{proof}

As we noted in \cref{observation:universal_property_of_em_presheaf}, a map $Y \rightarrow \B^{\inftyc}_{\Lambda}(M, n+1)$ determines, among other things, a map of presheaves $X \rightarrow Y$ which is levelwise fibred in Eilenberg-MacLane spaces of degree $n$. By the Yoneda lemma, this correspondence is determined by pulling back along a universal map $\mathrm{E}^{\inftyc}_{\Lambda}(M, n+1)) \rightarrow Y \rightarrow \B^{\inftyc}_{\Lambda}(M, n+1)$. The following gives an explicit description of this family: 

\begin{prop}
\label{prop:universal_family_over_moduli_of_n_types}
Let $\mathrm{E}^{\inftyc}_{\Lambda}(M, n+1) \rightarrow  \B^{\inftyc}_{\Lambda}(M, n+1)$ denote the canonical morphism levelwise fibred in Eilenberg-MacLane spaces of degree $n$. Then the composite 
\[
\mathrm{E}^{\inftyc}_{\Lambda}(M, n+1) \rightarrow \B^{\inftyc}_{\Lambda}(M, n+1) \rightarrow \Lambda 
\]
is an equivalence. 
\end{prop}

\begin{proof}
Using \cref{observation:universal_property_of_em_presheaf}, one can describe the space $\map(Y, \mathrm{E}^{\inftyc}_{\Lambda}(M, n+1))$ as the $\infty$-groupoid of following triples: 

\begin{enumerate}
\item a map of presheaves $X \rightarrow Y$ levelwise fibred in Eilenberg-MacLane spaces of degree $n$, 
\item a morphism $\delta \colon Y \rightarrow \Lambda$ and 
\item an isomorphism $\epsilon \colon \pi_{n}(X \rightarrow Y) \simeq \delta^{*}M$ of modules over $Y$. 
\item a section $Y \rightarrow X$. 
\end{enumerate}

A choice of a section of $X \rightarrow Y$ lifts it to an object of  
\[
(\presheaves(\inftyc)_{/Y})_{*} \simeq \presheaves(\inftyc_{/Y})_{*} \simeq \Fun(\inftyc^{op}, \spaces_{*}), 
\]
and since the fibres are Eilenberg-MacLane spaces, it is in fact an object of
\[
\Fun(\inftyc^{op}, (\emspaces_{n})_{*}).
\]

By \cref{prop:projectivity_of_em_spaces}, the functor $\B^{n} \colon \abeliangroups \rightarrow (\emspaces_{n})_{*}$ is an equivalence, so that the automorphism group of $X \in (\presheaves(\inftyc)_{/Y})_{*}$ coincides with the automorphism group of $\pi_{n}(X \rightarrow Y)$ as a $Y$-module. It follows that if a morphism $\delta \colon Y \rightarrow Y \rightarrow \Lambda$ is fixed, then the space of choices of $(1), (3)$ and $(4)$ is contractible. As in terms of the functors they represent
\[
\mathrm{E}^{\inftyc}_{\Lambda}(M, n+1) \rightarrow \Lambda
\]
corresponds to forgettting $(1), (3)$ and $(4)$, we deduce that this composite is an equivalence. 
\end{proof}

We proceed with the promised classification of $n$-types. We begin by fixing a $\inftyc$-algebra $\Lambda$ which plays the role previously played by the fundamental group.

\begin{defin}
\label{defin:spherical_presheaf_of_type_as_lambda_module}
Let $Y \in \presheaves_{\Sigma}(\inftyc)$ be an $(n-1)$-type, where $n \geq 1$, and let $M$ be a $\Lambda$-module. We say that $X \in \presheaves_{\Sigma}(\inftyc)$ is \emph{of type} $Y +_{\Lambda} (M, n)$ if there exists an equivalence $X_{\leq (n-1)} \simeq Y$ and an isomorphism $Y_{\leq 0} \simeq \Lambda$ under which $\pi_{n}(X \rightarrow Y) \simeq M$ as $\Lambda$-modules.
\end{defin}

\begin{notation}
 We denote by $\mathcal{M}(Y +_{\Lambda} (M, n))$ the full subgroupoid of $\presheaves_{\Sigma}(\ccat)$ spanned by spherical presheaves of type $Y +_{\Lambda} (M, n)$.
\end{notation}
Our goal is to give a description of $\mathcal{M}(Y +_{\Lambda} (M, n))$ in terms of $Y$, $M$ and $\Lambda$. 

\begin{defin}
\label{defin:automorphism_of_pair_of_c_algebra_and_module}
An \emph{automorphism} of the pair $(\Lambda, M)$ is a pair $(\alpha, \beta)$, where $\alpha \colon \Lambda \rightarrow \Lambda$ is an isomorphism of $\inftyc$-algebras and $\beta \colon M \rightarrow \alpha^{*}M$ is an isomorphism of modules. We denote the group of such automorphisms by $\Aut(\Lambda, M)$. 
\end{defin}

\begin{rem}[Action on the Eilenberg-MacLane presheaf]
\label{rem:action_of_pair_autos_on_em_presheaf}
The group of \cref{defin:automorphism_of_pair_of_c_algebra_and_module} can also be described as the group of automorphisms of $\Lambda$, considered as a presheaf over $\mathcal{M}_{\inftyc}(\abeliangroups)$. Since $\B^{\inftyc}_{\Lambda}(M, n+1)$ is defined as a pullback along the map $\Lambda \rightarrow \mathcal{M}_{\inftyc}(\abeliangroups)$, the Eilenberg-MacLane presheaf carries a canonical action of the group $\Aut(\Lambda, M)$, inducing an action on $\map(Y, \B^{\inftyc}_{\Lambda}(M, n+1))$ for any $Y \in \presheaves_{\Sigma}(\inftyc)$. 

In terms of the moduli interpretation of \cref{observation:universal_property_of_em_presheaf}, the action is given as follows: an element $(\alpha, \beta) \in \Aut(\Lambda, M)$ acts on a triple 
\[
(X \rightarrow Y, \delta, \epsilon) \in \map(Y, \B^{\inftyc}_{\Lambda}(M, n+1))
\]
by sending it to $(X \rightarrow Y, \alpha \circ \delta, \beta \circ \epsilon)$.
\end{rem}
The main result of this section will be a presheaf analogue of \cref{thm:classification_of_n_types_fixed_module}, asserting the existence of a canonical fibre sequence of moduli spaces. Analogously to \cref{construction:fibre_sequence_of_moduli_spaces_for_n_types}, we first define the two relevant maps and a nullhomotopy of their composite: 

\begin{construction}
\label{construction:potential_fibre_sequence_of_moduli_spaces_of_presheaves}
Let $Y$ be an $(n-1)$-type in $\presheaves_{\Sigma}(\inftyc)$, $\Lambda$ be a $\inftyc$-algebra and let $M$ be a $\Lambda$-module. We will construct a diagram of spaces
\[
\map_{0-\mathrm{eq}}(Y, \B^{\inftyc}_{\Lambda}(M, n+1)) \rightarrow \mathcal{M}(Y +_{\Lambda} (M, n)) \rightarrow \mathcal{M}(Y) \times \B\Aut(\Lambda, M).
\]
together with the nullhomotopy of the composite map, where $\map_{0-\mathrm{eq}}$ denotes the space of $0$-equivalences and $\mathcal{M}(Y)$ is the $\infty$-groupoid of presheaves equivalent to $Y$. 

We first define the right map, proceeding as in the proof of \cref{thm:classification_of_n_types_fixed_module}. As we're mapping into a product, it is enough to specify a map into each factor. For the first factor, the needed map $\mathcal{M}(Y +_{\Lambda} (M, n)) \rightarrow \mathcal{M}(Y)$ takes $X$ to $X_{\leq (n-1)}$. For the second factor, notice that $\B\Aut(\Lambda, M)$ can be identified with the groupoid of $\inftyc$-algebras over $\mathcal{M}_{\inftyc}(\abeliangroups)$ equivalent to the map $\Lambda \rightarrow \mathcal{M}_{\inftyc}(\abeliangroups)$ classifying $M$. In these terms, the needed map $\mathcal{M}(Y +_{\Lambda} (M, n)) \rightarrow \B\Aut(\Lambda, M)$ takes $X$ to $X_{\leq 0}$ together with the classifying map for the module $\pi_{n}X$.  

To define the left map we use the universal property of the Eilenberg-MacLane presheaf of \cref{observation:universal_property_of_em_presheaf}, sending an arrow $Y \rightarrow  \B^{\inftyc}_{\Lambda}(M, n+1))$ to the corresponding arrow $X \rightarrow Y$ which is levelwise fibred in Eilenberg-MacLane spaces. 

We move on to the nullhomotopy; again, as we're mapping into a product, we can define nullhomotopies on each factor separately. For the nullhomotopy of 
\[
\map_{0-\mathrm{eq}}(Y, \B^{\inftyc}_{\Lambda}(M, n+1)) \rightarrow \mathcal{M}(Y),
\]
notice that the map $X \rightarrow Y$ induces a canonical equivalence $X_{\leq n-1} \simeq Y$. The nullhomotopy of 
\[
\map_{0-\mathrm{eq}}(Y, \B^{\inftyc}_{\Lambda}(M, n+1)) \rightarrow \B\Aut(\Lambda, M)
\]
is determined by the structure isomorphisms $\pi_{n}(X \rightarrow Y) \simeq M$ and $\pi_{0} X \simeq \pi_{0} Y \simeq \Lambda$ coming from the universal property of $\B^{\inftyc}_{\Lambda}(M, n+1))$. 
\end{construction}

\begin{theorem}
\label{thm:classification_of_n_type_presheaves_fixed_module}
The sequence 
\[
\map_{0-\mathrm{eq}}(Y, \B^{\inftyc}_{\Lambda}(M, n+1)) \rightarrow \mathcal{M}(Y +_{\Lambda} (M, n)) \rightarrow \mathcal{M}(Y) \times \B\Aut(\Lambda, M)
\]
of \cref{construction:potential_fibre_sequence_of_moduli_spaces_of_presheaves} is a fibre sequence. 
\end{theorem}

\begin{proof}
After consulting the definition of the right map, we see that its fibre can be identified with the $\infty$-groupoid of triples of 
\begin{enumerate}
    \item a presheaf $X$ of type $Y +_{\Lambda} (M, n)$
    \item a fixed equivalence $X_{\leq n-1} \simeq Y$ and 
    \item isomorphisms $\pi_{0}X \simeq \Lambda$ and $\pi_{n}X \simeq M$, the latter one being an isomorphism of $\Lambda$-modules. 
\end{enumerate}
As $X$ is an $n$-type, the fixed equivalence $X_{\leq n-1} \simeq Y$ determines and is determined by map $X \rightarrow Y$ which is levelwise fibred in Eilenberg-MacLane spaces of degree $n$. \cref{observation:universal_property_of_em_presheaf} identifies this $\infty$-groupoid of triple with the required space of $0$-equivalences, as required. 
\end{proof}

\begin{cor}
\label{cor:postnikov_invariant_of_a spherical_presheaf_in_terms_of_homotopy_classes}
An equivalence class of a presheaf of type $Y +_{\Lambda} (M, n)$ is determined by a homotopy class of $0$-equivalences $Y \rightarrow \B^{\inftyc}_{\Lambda}(M, n+1)$ well-defined up to the action of $\Aut_{\presheaves_{\Sigma}(\inftyc)}(Y) \times \Aut(\Lambda, M)$. 
\end{cor}

\begin{proof}
Since $\mathcal{M}(Y) \simeq \B \Aut_{\presheaves_{\Sigma}(\inftyc)}(Y)$, using \cref{observation:homotopy_quotient_by_an_infty_group} we can rephrase \cref{thm:classification_of_n_type_presheaves_fixed_module} as giving an equivalence 
\[
\mathcal{M}(Y +_{\Lambda} (M, n)) \simeq \map_{0-\mathrm{eq}}(Y, \B^{\inftyc}_{\Lambda}(M, n+1))_{h (\Aut_{\presheaves_{\Sigma}(\inftyc)}(Y) \times \Aut(\Lambda, M))}.
\]
Passing to path components yields the desired statement. 
\end{proof}

\begin{rem}
As we observed in the proof of \cref{cor:postnikov_invariant_of_a spherical_presheaf_in_terms_of_homotopy_classes}, the fibre sequence of \cref{thm:classification_of_n_type_presheaves_fixed_module} encodes an action of the $\infty$-group $\Aut_{\presheaves_{\Sigma}(\inftyc)}(Y) \times \Aut(\Lambda, M)$ on $\map_{0-\mathrm{eq}}(Y, \B^{\inftyc}_{\Lambda}(M, n+1))$. By retracing the definitions of the relevant maps, one can verify that $\Aut_{\presheaves_{\Sigma}(\inftyc)}(Y)$ acts on the mapping space by acting on the source and that $\Aut(\Lambda, M)$ acts on the target through the action of \cref{rem:action_of_pair_autos_on_em_presheaf}.
\end{rem}

\subsection{Homotopy presheaves}

In this section we make results of the previous section more computationally accessible by showing that under certain assumptions, one can reduce from $\inftyc$ to its homotopy category $h\inftyc$. 

We continue working under \cref{assumption:assumption_on_c_to_have_good_presheaves}, so that the base $\infty$-category $\inftyc$ is pointed, has finite coproducts, and every object $c \in \inftyc$ admits a cogroup structure in the homotopy category $h \inftyc$. Note that it is clear that if $\inftyc$ satisfies these properties, so does its homotopy category. Thus, all of the results proven in previous sections apply to $\presheaves_{\Sigma}(h\inftyc)$ directly. To distinguish the latter from $\presheaves_{\Sigma}(\inftyc)$ we adopt the following convention:

\begin{defin}
\label{defin:spherical_homotopy_presheaves}
A \emph{spherical homotopy presheaf} is a product-preserving functor  
\[
X \colon h \inftyc^{op} \rightarrow \spaces.
\]
We denote the $\infty$-category of spherical homotopy presheaves by $\presheaves_{\Sigma}(h \inftyc)$. 
\end{defin}

\begin{warning}
Notice that the word ``homotopy'' in \cref{defin:spherical_homotopy_presheaves} refers to the fact that the homotopy presheaves are indexed by the homotopy category $h\inftyc$. The presheaves themselves are still valued in spaces! In particular, $\presheaves_{\Sigma}(h\inftyc)$ is very different from $h (\presheaves_{\Sigma}(\inftyc))$, the homotopy category of $\presheaves_{\Sigma}(\inftyc)$, which we will not discuss. 
\end{warning}

The following two remarks explain the algebraic nature of homotopy presheaves. 
\begin{rem}[Homotopy presheaves are animated $\inftyc$-algebras]
\label{rem:homotopy_presheaves_and_animated_inftyc_algebras}
In the terminology of Cesnavicius and Scholze \cite[\S 5.1.4]{cesnavicius2019purity}, the animation of a presentable category $\dcat$ generated by its subcategory $\dcat^{cp} \subseteq \dcat$ of compact projective objects is the universal $\infty$-category $\dcat^{an}$ freely generated by $\dcat^{cp}$ under colimits. As a consequence of the universal property of the $\infty$-category of spherical presheaves of \cref{prop:universal_property_of_the_nonabelian_derived_category}, there is a canonical equivalence $\dcat^{an} \simeq \presheaves_{\Sigma}(\dcat^{cp})$.  

In the case at hand, the category $\inftyc\mhyphen\Alg \simeq \presheaves_{\Sigma}(\inftyc; \sets)$ of $\inftyc$-algebras is generated by its subcategory of compact projectives, which up to idempotent completion coincides with $h \inftyc$. It follows that there is a canonical equivalence 
\[
\presheaves_{\Sigma}(h \inftyc) \simeq (\inftyc \mhyphen \Alg)^{an}
\]
between spherical homotopy presheaves and the $\infty$-category of animated $\inftyc$-algebras. With the exception of the introduction, we decided not to phrase our results in terms of animation, to emphasize the analogy between $\presheaves_{\Sigma}(\inftyc)$ and $\presheaves_{\Sigma}(h \inftyc)$. 
\end{rem}

\begin{rem}[Homotopy presheaves and simplicial $\inftyc$-algebras]
\label{rem:homotopy_presheaves_and_simplicial_c_algebras}
Consider the category
\[
s \inftyc \mhyphen \Alg \simeq \presheaves_{\Sigma}(h \inftyc; s \sets)
\]
of simplicial $\inftyc$-algebras. If $X_{\bullet} \in s \inftyc \mhyphen \Alg$, then it defines a spherical homotopy presheaf $|X_{\bullet}|$ by the formula
\[
|X_{\bullet}|(c) := | X_{\bullet}(c) |,
\]
where the geometric realization is computed in spaces. By a result of Bergner, see \cite{bergner_rigidification_of_algebras} or \cite[5.5.9.2]{lurie_higher_topos_theory}, the resulting functor is a localization of $\infty$-categories and hence induces an equivalence
\[
(s \inftyc \mhyphen \Alg)[W^{-1}] \simeq \presheaves_{\Sigma}(h \inftyc),
\]
where $W$ is the class of maps $X_{\bullet} \rightarrow Y_{\bullet}$ such that $|X_{\bullet}(c)| \rightarrow |Y_{\bullet}(c)|$ is an equivalence of spaces for any $c \in h\inftyc$. 
\end{rem}

\begin{notation}
To relate spherical presheaves to spherical homotopy presheaves, we make us the of projection onto the homotopy category, which we denote by
\[
p  \colon \inftyc \rightarrow h\inftyc.
\]
\end{notation}

\begin{prop}
\label{prop:adjunction_between_spherical_preshaves_and_homotopy_spherical_presheaves}
There exists an adjoint pair
\[
p^{*} \colon \presheaves_{\Sigma}(\inftyc) \rightleftarrows \presheaves_{\Sigma}(h\inftyc) \colon p_{*},
\]
where $p_{*}$ is given by precomposition and $p^{*}$ is the unique colimit preserving functor extending $p$ on representable presheaves. 
\end{prop}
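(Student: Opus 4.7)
The plan is to construct $\pi_!$ via the universal property of the non-abelian derived category, verify that $\pi^*$ restricts to spherical presheaves, and then identify $\pi^*$ as the right adjoint via a Yoneda argument on representables.

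First I would check that precomposition with $\pi$ preserves product-preservation. The projection $\pi\colon \inftyc \to h\inftyc$ carries a coproduct $c_{1} \vee c_{2}$ in $\inftyc$ to a coproduct of $\pi(c_{1}), \pi(c_{2})$ in $h\inftyc$, because the universal property passes to the homotopy category. Consequently, if $X\colon (h\inftyc)^{op} \to \spaces$ is product-preserving, then so is $\pi^{*}X = X \circ \pi^{op}$, and $\pi^{*}$ restricts to a functor $P_{\Sigma}(h\inftyc) \to P_{\Sigma}(\inftyc)$.

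Next I would build $\pi_!$. Consider the composite $\iota \colon \inftyc \xrightarrow{\pi} h\inftyc \xrightarrow{y_{h}} P_{\Sigma}(h\inftyc)$. The map $\pi$ preserves finite coproducts by the previous paragraph. The Yoneda embedding into $P_{\Sigma}(h\inftyc)$ also preserves finite coproducts: by definition of $P_{\Sigma}$ together with the Yoneda lemma, for any $Z \in P_{\Sigma}(h\inftyc)$,
\[
\text{map}(y_{h}(d_{1} \vee d_{2}), Z) \simeq Z(d_{1} \vee d_{2}) \simeq Z(d_{1}) \times Z(d_{2}) \simeq \text{map}(y_{h}(d_{1}) \vee y_{h}(d_{2}), Z),
\]
so $y_{h}(d_{1} \vee d_{2}) \simeq y_{h}(d_{1}) \vee y_{h}(d_{2})$ in $P_{\Sigma}(h\inftyc)$. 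Thus $\iota$ preserves finite coproducts, and \textbf{Proposition \ref{prop:universal_property_of_the_nonabelian_derived_category}} together with \textbf{Remark \ref{rem:cocontinuity_of_the_derived_functor}} furnishes an essentially unique cocontinuous extension $\pi_{!}\colon P_{\Sigma}(\inftyc) \to P_{\Sigma}(h\inftyc)$ with $\pi_{!} \circ y \simeq \iota$.

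Finally I would produce the adjunction. Since $P_{\Sigma}(\inftyc)$ and $P_{\Sigma}(h\inftyc)$ are presentable and $\pi_{!}$ is cocontinuous, the adjoint functor theorem (\cite{lurie_higher_topos_theory}[5.5.2.9]) supplies a right adjoint $R$. To identify $R$ with $\pi^{*}$, it suffices to check the two agree on values, and naturality in the representable slot pins them down. For $c \in \inftyc$ and $X \in P_{\Sigma}(h\inftyc)$ one has
\[
R(X)(c) \simeq \text{map}_{P_{\Sigma}(\inftyc)}(y(c), R(X)) \simeq \text{map}_{P_{\Sigma}(h\inftyc)}(\pi_{!}y(c), X) \simeq X(\pi c) = (\pi^{*}X)(c),
\]
naturally in $c$, so $R \simeq \pi^{*}$. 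The main subtlety is really just bookkeeping: ensuring that the extension of $\iota$ lands in $P_{\Sigma}(h\inftyc)$ rather than in a larger presheaf category and that the cocontinuity hypothesis of the universal property is available, both of which are handled by the cited results of Lurie.
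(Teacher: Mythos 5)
Your proposal is correct and follows essentially the same route as the paper: obtain $\pi_{!}$ from the universal property of \textbf{Proposition \ref{prop:universal_property_of_the_nonabelian_derived_category}}, deduce cocontinuity from \textbf{Remark \ref{rem:cocontinuity_of_the_derived_functor}} since $\pi$ preserves coproducts, and identify the right adjoint with $\pi^{*}$ by evaluating on representables. You simply spell out the steps the paper leaves as "one verifies immediately."
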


\begin{proof}
The extension $p^{*}$ exists by the universal property of the $\infty$-category of spherical presheaves, which we have stated as \cref{prop:universal_property_of_the_nonabelian_derived_category}. Since $p  \colon \inftyc \rightarrow h\inftyc$ preserves coproducts, \cref{observation:cocontinuity_of_the_derived_functor} implies that $p^{*}$ is cocontinous, hence a left adjoint. One verifies immediately that the right adjoint is given by $p_{*}$. 
\end{proof}

\begin{observation}
\label{observation:right_adjoint_preserves_sifted_colimits}
By \cref{observation:limits_colimits_in_spherical_presheaves}, filtered colimits and geometric realizations are computed in spherical preshaves levelwise. Since $p_{*}$ is given by precomposition, it follows that, in addition to preserving limits, it also preserves filtered colimits and geometric realizations.
\end{observation}

\begin{rem}
The left adjoint $p^{*} \colon \presheaves_{\Sigma}(\inftyc) \rightarrow \presheaves_{\Sigma}(\inftyc)$ can be described as the restriction of the left Kan extension $\presheaves(\inftyc) \rightarrow \presheaves(h\inftyc)$, which preserves spherical presheaves by \cref{observation:limits_colimits_in_spherical_presheaves}.
\end{rem}

Recall from \cref{defin:c_algebra} that a $\inftyc$-algebra is a discrete object of $\presheaves_{\Sigma}(\inftyc)$. One expects that when working with discrete objects, there shouldn't be much difference between $\inftyc$ and $h\inftyc$. That is indeed the case, which we now make explicit:

\begin{prop}
\label{prop:inftyc_algebras_and_hinftyc_algebras_are_the_same}
The functor $p_{*} \colon \presheaves_{\Sigma}(h \inftyc) \rightarrow \presheaves_{\Sigma}(\inftyc)$ restricts to an equivalence between the categories of $h\inftyc$-algebras and $\inftyc$-algebras. 
\end{prop}

\begin{proof}
A $\inftyc$-algebra can be identified with a product-preserving functor $\Lambda \colon \inftyc^{op} \rightarrow \sets$. Since $\sets$ is an ordinary category, $\Lambda$ uniquely factors through the projection $p \colon \inftyc \rightarrow h \inftyc$; in other words, it's uniquely in the image of $p_{*}$. 
\end{proof}

\begin{notation}
Keeping in mind \cref{prop:inftyc_algebras_and_hinftyc_algebras_are_the_same}, we will in general not distinguish between $\inftyc$- and $h \inftyc$-algebras. 
\end{notation}

As $p_{*}$ is given by precomposition, $X \rightarrow Y$ is a morphism of spherical homotopy presheaves with the property of being levelwise fibred in Eilenberg-MacLane spaces, then $p_{*}X \rightarrow p_{*}Y$ is a morphism of spherical sheaves with the same property. Similarly, $p_{*}$ takes discrete abelian group objects to discrete abelian group objects. Applying these observations to the universal case; that is, to the case of moduli objects of \cref{prop:existence_of_moduli_of_discrete_abelian_groups} and \cref{prop:existence_of_moduli_of_em_spaces_in_presheaves}, we obtain canonical comparison maps
\[
p_{*} \mathcal{M}_{h\inftyc}(\emspaces_{n}) \rightarrow \mathcal{M}_{\inftyc}(\emspaces_{n})
\]
and
\[
p_{*} \mathcal{M}_{h\inftyc}(\abeliangroups) \rightarrow \mathcal{M}_{\inftyc}(\abeliangroups).
\]
These two are closely related in the following way: 
\begin{prop}
\label{prop:pullback_diagram_of_em_spaces}
For any $n \geq 1$, the diagram of moduli of abelian Eilenberg-MacLane spaces of degree $n$ and of moduli of discrete abelian groups

\begin{center}
	\begin{tikzpicture}
		\node (TL) at (0, 1.5) {$ p_{*} \mathcal{M}_{h\inftyc}(\emspaces_{n}) $};
		\node (BL) at (0, 0) {$ p_{*} \mathcal{M}_{h\inftyc}(\abeliangroups) $};
		\node (TR) at (3, 1.5) {$ \mathcal{M}_{\inftyc}(\emspaces_{n}) $};
		\node (BR) at (3, 0) {$ \mathcal{M}_{\inftyc}(\abeliangroups) $};
		
		\draw [->] (TL) to (TR);
		\draw [->] (TL) to (BL);
		\draw [->] (TR) to (BR);
		\draw [->] (BL) to (BR);
	\end{tikzpicture},
\end{center}
where the horizontal maps classify $p^{*}$ of the universal families and and the vertical maps are induced by taking $\pi_{n}$, is a cartesian diagram in $\Fun(\inftyc^{op}, \largespaces)$. 
\end{prop}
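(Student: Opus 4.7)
The plan is to verify the cartesian property levelwise, using that limits of presheaves are computed levelwise. At each $c \in \inftyc$, the square evaluates to one whose four corners are the functor spaces $\fun((h\inftyc)_{/c}^{op}, \euscr{EM}_n^\cong)$, $\fun(\inftyc_{/c}^{op}, \euscr{EM}_n^\cong)$, $\fun((h\inftyc)_{/c}^{op}, \abeliangroups)^\cong$, and $\fun(\inftyc_{/c}^{op}, \abeliangroups)^\cong$, with horizontal arrows given by precomposition along $\inftyc_{/c} \to (h\inftyc)_{/c}$ and vertical arrows by $\pi_n$.

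I would then check this level-$c$ square is cartesian by comparing the fibers of the two vertical maps. Fix a module $M \in \fun((h\inftyc)_{/c}^{op}, \abeliangroups)^\cong$. Both $\inftyc_{/c}$ and $(h\inftyc)_{/c}$ possess the terminal object $id_c$, and $\pi$ sends $id_c$ to $id_c$. The computation in the proof of \textbf{Proposition \ref{prop:description_of_the_eilenberg_maclane_presheaf}}, which relies only on the terminal object assumption of \textbf{Proposition \ref{prop:automorphisms_of_eilenberg_maclane_presheaves}}, then identifies the fibers of both vertical arrows at $M$ (respectively at $\pi^*M$) with the Eilenberg-MacLane space $K(M(id_c), n+1)$, using that $(\pi^*M)(id_c) = M(id_c)$.

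It remains to show the induced map between these fibers, which sends an EM-presheaf $X$ on $(h\inftyc)_{/c}$ equipped with $\pi_n X \simeq M$ to the pullback $\pi^* X$ on $\inftyc_{/c}$ with the induced identification $\pi_n(\pi^* X) \simeq \pi^*M$, is an equivalence. The canonical section of $\mathcal{M}(\euscr{EM}_n) \to \mathcal{M}(\abeliangroups)$ exhibited in the proof of \textbf{Proposition \ref{prop:description_of_the_eilenberg_maclane_presheaf}} is $M \mapsto B^n M$, which commutes with $\pi^*$ levelwise, so the map on fibers is pointed. The isomorphism $\pi_{n+1}(\text{fiber}) \simeq M(id_c)$ arising from \textbf{Proposition \ref{prop:automorphisms_of_eilenberg_maclane_presheaves}} is given by evaluation at the terminal object; since $\pi$ preserves this terminal object, the induced map on $\pi_{n+1}$ is the identity on $M(id_c)$, and a pointed self-map of an Eilenberg-MacLane space inducing the identity on the relevant homotopy group is an equivalence. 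The main delicacy will be tracking that the canonical identifications of basepoint and of $\pi_{n+1}$ really are compatible with $\pi^*$; however each such compatibility ultimately reduces to the fact that $\pi(id_c) = id_c$ together with the levelwise nature of the $B^n$ construction.
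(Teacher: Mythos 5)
Your proposal is correct and follows essentially the same route as the paper: check the square levelwise, then compare the fibres of the two vertical maps using the terminal-object computation from the proof of \textbf{Proposition \ref{prop:description_of_the_eilenberg_maclane_presheaf}}, which identifies both fibres as Eilenberg--MacLane spaces of type $K(M(id_{c}), n+1)$. You in fact supply a detail the paper leaves implicit, namely that the induced comparison map between the fibres (not merely the abstract fibres themselves) is an equivalence, which you verify by checking it is pointed via the $B^{n}$ section and induces the identity on $\pi_{n+1}$.
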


\begin{proof}
To check whether this diagram of presheaves is cartesian we only have to check whether it is levelwise cartesian. Thus, we can fix $c \in \inftyc$ and look a the corresponding diagram of $\infty$-groupoids. In this case, we have described the fibres of the vertical maps explicitly in the proof of \cref{prop:description_of_the_eilenberg_maclane_presheaf} and one verifies that they are the same. 
\end{proof}

\begin{cor}
\label{cor:equivalence_between_em_and_homotopy_em}
For any $n \geq 1$, a $\inftyc$-algebra $\Lambda$ and a $\Lambda$-module $M$, the canonical comparison map
\[
p_* \B^{h\inftyc}_{\Lambda}(M, n+1) \rightarrow \B^{\inftyc}_{\Lambda}(M, n+1)
\]
between Eilenberg-MacLane presheaves of \cref{defin:eilenberg_maclane_presheaf} is an equivalence. 
\end{cor}

\begin{proof}
This is immediate from the pullback pasting lemma and the fact that $p_{*}$ preserves limits, as the relevant Eilenberg-MacLane presheaves are defined by pullbacks along the vertical maps in the diagram of \cref{prop:pullback_diagram_of_em_spaces}.
\end{proof}

We note that in the case of spherical homotopy presheaves, the Eilenberg-MacLane presheaf is closely related to a very classical notion of cohomology:

\begin{rem}[Andr\'{e}-Quillen cohomology of $\inftyc$-algebras]
\label{rem:andre_quillen_cohomology_of_c_algebras_as_maps_into_em_presheaf}
The homotopy presheaf $\B_{\Lambda}^{h \inftyc}(M, n)$ can be used to define cohomology of $\inftyc$-algebras in the following way: if $K \rightarrow \Lambda$ is a map of $\inftyc$-algebras, we define the Andr\'{e}-Quillen cohomology of $K$ with coefficients in $M$ as 
\[
\mathrm{H}^{n}_{\Lambda}(K, M) := \pi_{0} \map_{\presheaves_{\Sigma}(h \inftyc)_{/\Lambda}}(K, \B_{\Lambda}^{h \inftyc}(M, n)), 
\]
the set of homotopy classes of maps into the Eilenberg-MacLane presheaf in the overcategory. This is the $\inftyc$-algebra analogue  of Andr\'{e}-Quillen cohomology as in \cite[Chapter II, \S 5]{quillen2006homotopical}, \cite{quillen1970co}; see also \cite[\S 6.7]{realization_space_of_a_pi_algebra} for the case of $\Pi$-algebras.
\end{rem}

\subsection{The spiral fibre sequence} 
\label{subsection:spiral}

In this section, we extend the results of the previous one by describing a stronger relation between the $\infty$-categories $\presheaves_{\Sigma}(\inftyc)$ of spherical presheaves and spherical homotopy presheaves $\presheaves_{\Sigma}(h \inftyc)$. 

A classical shadow of this relationship, going back to the work of Dwyer-Kan-Stover \cite{bigraded_homotopy_groups}, is known as the spiral long exact sequence, which relates two kinds of homotopy groups of a simplicial space. Our main goal is to show that this long exact sequence arises naturally from of a certain fibre sequence of presheaves. To do so we need to make a minor assumption on $\inftyc$ that makes the construction possible. 

Recall that we assumed that $\inftyc$ is a pointed $\infty$-category; in particular, all of its mapping spaces are pointed. In this context, we have the following classical notion: 

\begin{defin}
\label{defin:tensor_with_pointed_space}
If $U$ is a pointed space and $c \in \inftyc$, the \emph{tensor} $U \otimes c$ is an object of $\inftyc$ together with an equivalence
\[
\map_{\inftyc}(U \otimes c, c^\prime) \simeq \map_{\spaces_{*}}(U, \map_{\inftyc}(c, c^{\prime}))
\]
of spaces natural in $c^{\prime} \in \inftyc$. 
\end{defin}

\begin{example}
\label{example:examples_of_tensors_by_s1_and_s0}
For any $c \in \inftyc$, we have a canonical equivalence $S^{0} \otimes c \simeq c$. Similarly, assuming they exist, we have $S^{1} \otimes c \simeq \Sigma (c)$, where the right hand side is the suspension.  
\end{example}

\begin{warning}
Beware that the tensor of \cref{defin:tensor_with_pointed_space} is the reduced one; that is, it uses the canonical enrichment of $\inftyc$ in in pointed spaces $\spaces_{*}$, rather than the one in spaces. 
\end{warning}
Note that by the Yoneda lemma, if a tensor $U \otimes c$ exists then it is unique up to equivalence and functorial in both $c$ and $U$. In addition to the previous \cref{assumption:assumption_on_c_to_have_good_presheaves}, we will need the following additional assumption on our indexing $\infty$-category: 

\begin{assumption}
\label{assumption:inftyc_admits_tensors_by_the_circle}
The $\infty$-category $\inftyc$ admits tensors by $S^{1}$; that is, admits suspensions. 
\end{assumption}
As it turns out, combined with \cref{assumption:assumption_on_c_to_have_good_presheaves} the existence of suspensions implies the existence of other kinds of tensors. 

\begin{lemma}
\label{lemma:inftyc_satisfying_assumptions_admits_tensors_by_s1plus_which_also_split}
The $\infty$-category $\inftyc$ admits tensors by $S^{1}_{+}$. Moreover, for any $c \in \inftyc$ there exists a (non-canonical) map $S^{1} \otimes c \rightarrow S^{1}_{+} \otimes c$ which together with the canonical map
\[
c \simeq S^{0} \otimes c \simeq (\mathrm{pt})_{+} \otimes c \rightarrow S^{1}_{+} \otimes c
\]
induces an equivalence
\[
S^{1}_{+} \otimes c \simeq (S^{1} \otimes c) \vee c.
\]
\end{lemma}

\begin{proof}
Let $c \in \inftyc$ and consider its image under the Yoneda embedding $y \colon \inftyc \rightarrow \presheaves_{\Sigma}(\inftyc)$. Since the target $\infty$-category is presentable, it admits tensor products by any pointed space and so in particular there exists a tensor $S^{1}_{+} \otimes y(c) \in \presheaves_{\Sigma}(\inftyc)$. We claim that the latter is in the image of the Yoneda embedding; that is, we have $S^{1}_{+} \otimes y(c) \simeq y(c')$ for some $c' \in \inftyc$. It then follows that $c' \simeq S^{1}_{+} \otimes c$, ending the argument. 

Using the defining property of the tensor, for any $X \in \presheaves_{\Sigma}(\inftyc)$ we have 
\[
\map_{\presheaves_{\Sigma}(\inftyc)}(S^{1}_{+} \otimes c, X) \simeq \map_{\spaces_{*}}(S^{1}_{+}, \map_{\presheaves_{\Sigma}(\inftyc)}(y(c), X)) \simeq \map_{\spaces}(S^{1}, X(c)).
\]
We will use the cogroup structure of $c$ in the homotopy category $h \inftyc$ to construct a canonical equivalence
\[
\map_{\spaces}(S^{1}, X(c)) \simeq \Omega X(c) \times X(c) \simeq \map_{\presheaves_{\Sigma}(\inftyc)}(S^{1} \otimes y(c) \vee y(c), X),
\]
from which it follows that we can take 
\[
c' \simeq (S^{1} \otimes c) \vee c, 
\]
ending the argument. Our construction will be dual dual to the classical argument on the splitting of free loop spaces in the presence of a group structure, following \cite{haine2021splitting}. 

Since we assume that $c$ admits a structure of a cogroup in the homotopy category $h \inftyc$, there exists a counit $u \colon c \rightarrow \mathrm{1}_{\inftyc}$, where $\mathrm{1}_{\inftyc} \in \inftyc$ is the zero object, and a counital comultiplication $m \colon c \rightarrow c \vee c$ such that the associated shear map
\[
sh := (i_{1}, m) \colon c \vee c \rightarrow c \vee c
\]
is an equivalence, where $i_{1} \colon c \rightarrow c \vee c$ is the inclusion of the first summand. Applying $X$ to the the commutative diagram (dual to that of \cite[Construction 3.1]{haine2021splitting}) 
\[
\begin{tikzcd}
	c && {c \vee c} & {} & c \\
	c & {} & {c \vee c} && c
	\arrow["sh", from=2-3, to=1-3]
	\arrow["{\mathrm{id} \vee u}", from=1-3, to=1-5]
	\arrow["{id \vee u}"', from=1-3, to=1-1]
	\arrow["{\mathrm{id} \vee \mathrm{id}}"', from=2-3, to=2-1]
	\arrow["{\mathrm{id} \vee \mathrm{id}}", from=2-3, to=2-5]
	\arrow["{\mathrm{id}}"', from=2-5, to=1-5]
	\arrow["{\mathrm{id}}", from=2-1, to=1-1]
\end{tikzcd}
\]
induces a map 
\[
\map(S^{1}, X(c)) \rightarrow \Omega X(c) \times X(c)
\]
which is an equivalence by \cite[Proposition 3.3]{haine2021splitting}. This ends the proof that $S^{1}_{+} \otimes c$ exists and is equivalent to $(S^{1} \otimes c) \vee c$. By retracing the construction, we see that the composite 
\[
c \hookrightarrow (S^{1} \otimes c) \vee c \simeq S^{1}_{+} \otimes c
\]
coincides with the canonical map $c \simeq (\mathrm{pt})_{+} \otimes c \rightarrow S^{1}_{+} \otimes c$, showing the the second part. 
\end{proof}

\begin{rem}
Note that as a consequence of the defining property, formation of the tensor $U \otimes c$ preserves colimits in either of the variables. In particular, if $U$ and $U'$ are pointed spaces and $c \in \inftyc$, then 
\[
(U \vee U') \otimes c \simeq (U \otimes c) \vee (U' \otimes c),
\]
where we remind the reader that $\vee$ denotes the coproduct. It follows that if $\inftyc$ satisfies both \cref{assumption:assumption_on_c_to_have_good_presheaves} and \cref{assumption:inftyc_admits_tensors_by_the_circle}, then it admits tensors by any finite wedge of $S^{1}$ and $S^{1}_{+}$. 
\end{rem}

\begin{observation}[Cogroup structure on the tensor]
\label{observation:cogroup_structure_on_tensor}
As $S^{1}$ is an $\infty$-cogroup object in pointed spaces and since formation of tensors preserves colimits, $S^{1} \otimes c$ is canonically a $\infty$-cogroup for any $c \in \inftyc$. Through \cref{example:examples_of_tensors_by_s1_and_s0}, this corresponds to the usual $\infty$-cogroup structure on the suspension. 

Similarly, $S^{1}_{+}$ is an $\infty$-cogroup in the $\infty$-category $(\spaces_{/S^{0}})_{*}$ of pointed objects in spaces over $S^{0}$, which makes the tensor $S^{1}_{+} \otimes c$ into an $\infty$-cogroup object in $(\inftyc_{/c})_{*}$. 
\end{observation}

We will need the following lemma. 

\begin{lemma}
\label{lemma:spherical_presheaf_preserves_s1_s1plus_cofibre_sequence}
Let $Y$ be a spherical presheaf and $c \in \inftyc$. Then the sequence of spaces
\[
Y(S^{1} \otimes c) \rightarrow Y(S^{1}_{+} \otimes c) \rightarrow Y(c)
\]
is fibre. 
\end{lemma}

\begin{proof}
This follows from the second part of \cref{lemma:inftyc_satisfying_assumptions_admits_tensors_by_s1plus_which_also_split}, as the above sequence can be (non-canonically) identified with 
\[
Y(S^{1} \otimes c) \rightarrow Y(S^{1} \otimes c) \times Y(c) \rightarrow Y(c). 
\]
\end{proof}

\begin{defin}
Let $U$ be either $S^{1}$ or $S^{1}_{+}$. If $Y \in \presheaves_{\Sigma}(\inftyc)$ is a spherical presheaf, then its $U$-\emph{shift} is the presheaf $Y_{U}$ defined by the formula
\[
Y_{U}(c) := Y(U \otimes c).
\]
This is again spherical, as $U \otimes - \colon \inftyc \rightarrow \inftyc$ preserves coproducts. 
\end{defin} 

\begin{observation}[Group structure on the shift]
\label{observation:group_structure_of_a_shift_and_recovering_ys1_from_ys1plus}
As we observed in \cref{observation:cogroup_structure_on_tensor}, the tensor $S^{1} \otimes c$ has a canonical structure of an $\infty$-cogroup. Since a spherical presheaf 
\[
Y \colon \inftyc^{op} \rightarrow \spaces
\]
takes finite coproducts to products, we deduce that $Y(S^{1} \otimes c)$ is canonically an $\infty$-group in spaces. By varying $c \in \inftyc$, this endows the shift $Y_{S^{1}}$ with the structure of an $\infty$-group object in spherical presheaves, natural in $Y$. Similarly, $Y_{S^{1} _{+}}$ is a $\infty$-group object in $\presheaves_{\Sigma}(\inftyc)_{/Y}$.

Note that one can recover $Y_{S^{1}}$ (together with its $\infty$-group structure) from $Y_{S^{1}_{+}}$: as a consequence of \cref{lemma:spherical_presheaf_preserves_s1_s1plus_cofibre_sequence}, $Y_{S^{1}}$ can be canonically identified with the fibre of $Y_{S^{1}_{+}} \rightarrow Y$.
\end{observation}

The shift construction can be used to produce modules in the sense of \cref{defin:module_over_a_spherical_presheaf}:

\begin{example}[Module structures on shifts]
\label{example:module_structure_on_shifts}
If $\Lambda$ is a $\inftyc$-algebra, then so is $\Lambda_{S^{1}_{+}}$ and the map $\Lambda_{S^{1}_{+}} \rightarrow \Lambda$ has a canonical structure of a $\Lambda$-module with underlying abelian group object $\Lambda_{S^{1}}$. To see this, note that by \cref{observation:group_structure_of_a_shift_and_recovering_ys1_from_ys1plus}, $\Lambda_{S^{1}_{+}} \in \presheaves_{\Sigma}(\inftyc)_{/\Lambda}$ has a canonical structure of a group object. Morever, this group structure is abelian (which is a property rather than a structure, as $\Lambda_{S^{1}_{+}}$ is discrete) as a consequence of the Eckmann-Hilton argument and \cref{prop:spherical_presheaves_take_values_in_h_spaces}. 

Since the shift construction commutes with limits, if $E \rightarrow \Lambda$ is a $\Lambda$-module with underlying abelian group object $A$, then $E_{S^{1}_{+}} \rightarrow \Lambda_{S^{1}_{+}}$ is a $\Lambda_{S^{1}_{+}}$-module with underlying abelian group object $A_{S^{1}_{+}}$.  The base-change is $E _{S^{1}_{+}} \times _{\Lambda_{S^{1}_{+}}} \Lambda \rightarrow \Lambda$ is then again a $\Lambda$-module. The diagram
\[
\begin{tikzcd}
	{E_{S^{1}_{+}}} & E \\
	{\Lambda_{S^{1}_{+}}} & \Lambda
	\arrow[from=1-1, to=1-2]
	\arrow[from=1-2, to=2-2]
	\arrow[from=1-1, to=2-1]
	\arrow[from=2-1, to=2-2]
\end{tikzcd}
\]
induces a map of $\Lambda$-modules whose kernel 
\[
K := \mathrm{ker}_{\abeliangroups(\presheaves_{\Sigma}(\inftyc)_{/\Lambda}}(E _{S^{1}_{+}} \times _{\Lambda_{S^{1}_{+}}} \Lambda \rightarrow E)
\]
is a $\Lambda$-module with underlying abelian group object $A_{S^{1}}$. 

In the case of $\Pi$-algebras, these constructions correspond to the module structures on shifts studied in the work of Blanc-Dwyer-Goerss, see \cite[4.13]{realization_space_of_a_pi_algebra}.
\end{example}
Dually to the shift construction, we have the levelwise mapping space, and the two are related by the following comparison map: 

\begin{construction}
\label{construction:loop_and_free_loop_comparison_maps}
Let $U$ be either $S^{1}$ or $S^{1}_{+}$. If $Y$ is a spherical presheaf, then we write $\map_{*}(U, Y)$ for the spherical presheaf defined by 
\[
\map_{*}(U, Y)(c) := \map_{\spaces_{*}}(U, Y(c)).
\]
Since $\map_{*}(S^{0}, U) \simeq U$ , there is a canonical family of maps $Y_{U} \rightarrow Y$ parametrized by $U$. These assemble into a natural transformation of spherical presheaves 
\[
Y_{U} \rightarrow \map_{*}(U, Y)
\]
which we call the \emph{loop comparison map}. 
\end{construction}

Since $S^{1}$ is an $\infty$-cogroup in pointed spaces, $\map_{*}(S^{1}, Y) \simeq \Omega Y$ is canonically an $\infty$-group in spherical presheaves. Similarly, $\map_{*}(S^{1}_{+}, Y)$ has a structure of an $\infty$-group object in $\presheaves_{\Sigma}(\inftyc)_{/Y}$. The comparison maps
\[
Y_{S^{1}} \rightarrow \map_{*}(S^{1}, Y) \simeq \Omega Y
\]
and
\[
Y_{S^{1}_{+}} \rightarrow \map_{*}(S^{1}_{+}, Y) \simeq \Omega_{Y} (Y \times Y) := Y \times_{Y \times Y} Y 
\]
lift to maps of $\infty$-group objects, as the $\infty$-group structure on both the source and target arise from the additional $S^{1}$ (resp. $S^{1}_{+}$) coordinate. Using the bar-loops adjunction between pointed objects and $\infty$-group objects, this two maps have adjoints 
\begin{equation}
\label{equation:adjoint_of_loop_comparison_map}
\B Y_{S^{1}} \rightarrow Y
\end{equation}
and 
\begin{equation}
\label{equation:adjoint_of_free_loop_comparison_map}
\B_{Y} Y_{S^{1}_{+}} \rightarrow Y \times Y,
\end{equation}
where $\B Y_{S^{1}} := \varinjlim_{[n] \in \Delta^{op}} (Y_{S^{1}})^{\times n} $ and $\B _{Y} Y_{S^{1}_{+}} := \varinjlim_{[n] \in \Delta^{op}} (Y_{S^{1}_{+}})^{\times_{Y} n}$ are the bar constructions internal to, respectively, the $\infty$-categories $\presheaves_{\Sigma}(\inftyc)$ and $\presheaves_{\Sigma}(\inftyc)_{/Y}$. 

The main result of this section will relate the adjoints (\ref{equation:adjoint_of_loop_comparison_map}) and (\ref{equation:adjoint_of_free_loop_comparison_map}) to the projection $p \colon \inftyc \rightarrow h \inftyc$ onto the homotopy category. Recall from \cref{prop:adjunction_between_spherical_preshaves_and_homotopy_spherical_presheaves} that this projection induces an adjunction
\[
p^{*} \colon \presheaves_{\Sigma}(\inftyc) \rightleftarrows \presheaves_{\Sigma}(h\inftyc) :p_{*}
\]
between spherical presheaves and spherical homotopy presheaves, where $p_{*}$ is given by precomposition. This adjunction determines for any $Y \in \presheaves_{\Sigma}(\inftyc)$ a unit map $Y \rightarrow p_{*} p^{*} Y$ which we will relate to the adjoints of the comparison maps. 

\begin{lemma}
\label{lemma:spiral_cartesian_squares_for_a_representable}
Suppose that $Y \simeq y(c)$ is representable. Then, there exists a unique homotopy making the diagram
\[
\begin{tikzcd}
	\B Y_{S^{1}} & Y \\
	\mathrm{pt} & p_{*} p^{*} Y
	\arrow[from=2-1, to=2-2]
	\arrow[from=1-2, to=2-2]
	\arrow[from=1-1, to=2-1]
	\arrow[from=1-1, to=1-2]
\end{tikzcd}
\]
commute, where the top horizontal arrow is the adjoint (\ref{equation:adjoint_of_loop_comparison_map}) and the right vertical arrow is the unit. Similarly, there exists a unique homotopy making the diagram 
\[
\begin{tikzcd}
	\B Y_{S^{1}_{+}} & Y \\
	Y & p_{*} p^{*} Y
	\arrow[from=2-1, to=2-2]
	\arrow[from=1-2, to=2-2]
	\arrow[from=1-1, to=2-1]
	\arrow[from=1-1, to=1-2]
\end{tikzcd}
\]
of spherical presheaves commute, where the top horizontal and left vertical arrows are determined by the adjoint (\ref{equation:adjoint_of_free_loop_comparison_map}) and the other two are unit maps. Moreover, both squares are cartesian and their right vertical map is a $0$-equivalence. 
\end{lemma}

\begin{proof}
We begin with the first diagram. We have $Y(-) \simeq \map_{\inftyc}(-, c)$, so that 
\[
\B Y_{S^{1}}(-) \simeq \B(\map_{\inftyc}(S^{1} \otimes -, c)) \simeq \B \Omega (\map_{\inftyc}(-, c)) \simeq \map_{\inftyc}(-, c)_{0}, 
\]
where the subscript $(-)_0$ denotes the basepoint component. Similarly, we have 
\[
p_{*} p^{*} Y(-) \simeq \map_{h \inftyc}(-, c) \simeq \pi_{0}\map_{\inftyc}(-, c). 
\]
It follows that the first diagram can be identified with 
\[
\begin{tikzcd}
	\map_{\inftyc}(-, c)_{0} &  \map_{\inftyc}(-, c) \\
	{\mathrm{pt}} & \pi_{0}\map_{\inftyc}(-, c)
	\arrow[from=1-1, to=1-2]
	\arrow[from=1-2, to=2-2]
	\arrow[from=2-1, to=2-2]
	\arrow[from=1-1, to=2-1]
\end{tikzcd},
\]
which commutes up to homotopy. Moreover, as the bottom right vertex is discrete, such a homotopy is essentially unique. Clearly, this is a cartesian diagram with right vertical map a $0$-equivalence, as  claimed. 

Making analogous identifications for the second diagram, we have 
\[
\B_{Y} Y_{S^{1}_{+}} \simeq \B_{Y} \Omega_{Y} (\map_{\inftyc}(-, c) \times \map_{\inftyc}(-, c)) \simeq (\map_{\inftyc}(-, c) \times \map_{\inftyc}(-, c))_{\mathrm{diag}}, 
\]
where $(-)_{\mathrm{diag}}$ denotes the path components on the diagonal (this is the analogue of the basepoint component for pointed objects in presheaves over  $Y$, since the diagonal is exactly the image of the basepoint map $Y \rightarrow Y \times Y$). Thus, the second diagram is equivalent to 
\[
\begin{tikzcd}
	(\map_{\inftyc}(-, c) \times \map_{\inftyc}(-, c))_{\mathrm{diag}} &  \map_{\inftyc}(-, c) \\
	\map_{\inftyc}(-, c) & \pi_{0}\map_{\inftyc}(-, c)
	\arrow[from=1-1, to=1-2]
	\arrow[from=1-2, to=2-2]
	\arrow[from=2-1, to=2-2]
	\arrow[from=1-1, to=2-1]
\end{tikzcd},
\]
which can also be made to commute using an essentially unique homotopy, and is clearly cartesian with the right vertical map a $0$-equivalence.
\end{proof}
The commutative squares of \cref{lemma:spiral_cartesian_squares_for_a_representable} can be extended from representables to the general case in the following way. 

\begin{construction}
\label{construction:spiral_fibre_sequence_and_total_spiral_square}
Observe that each of the endofunctors of $\presheaves_{\Sigma}(\inftyc)$ 
\begin{enumerate}
    \item $Y \mapsto \B Y_{S^{1}}$,
    \item $Y \mapsto \B_{Y} Y_{S^{1}{+}}$ and 
    \item $Y \mapsto p_{*} p^{*} Y$ 
\end{enumerate}
preserves sifted colimits. The first one is a composite of $Y \mapsto Y_{S^{1}}$, which preserves sifted colimits of spherical presheaves by \cref{observation:limits_colimits_in_spherical_presheaves} as it is calculated levelwise, and the classifying object functor $\B(-)$ which preserves sifted colimits as it can be computed by the bar construction; that is, $\B Y \simeq \varinjlim_{[n] \in \Delta^{op}} Y^{\times n}$. The same argument applies to the second one. The third one is a composite of $p^{*}$ which is cocontinuous as a left adjoint, and $p_{*}$ which preserves sifted colimits by \cref{observation:right_adjoint_preserves_sifted_colimits}.

Since  by \cref{prop:universal_property_of_the_nonabelian_derived_category} the Yoneda embedding induces an equivalence
\[
\Fun(\inftyc, \presheaves_{\Sigma}(\inftyc) \simeq \Fun^{\mathrm{sft}}(\presheaves_{\Sigma}(\inftyc), \presheaves_{\Sigma}(\inftyc)), 
\]
where the right hand side is the $\infty$-category of sifted colimit-preserving functors, the commutative squares of \cref{lemma:spiral_cartesian_squares_for_a_representable} induce commutative squares in the functor $\infty$-category. In other words, for any $Y \in \presheaves_{\Sigma}(\inftyc)$, we have canonical commutative squares
\begin{equation}
\label{equation:spiral_fibre_square}
\begin{tikzcd}
	\B Y_{S^{1}} & Y \\
	\mathrm{pt} & p_{*} p^{*} Y
	\arrow[from=2-1, to=2-2]
	\arrow[from=1-2, to=2-2]
	\arrow[from=1-1, to=2-1]
	\arrow[from=1-1, to=1-2]
\end{tikzcd}
\end{equation}
and 
\begin{equation}
\label{equation:spiral_cartesian_square}
\begin{tikzcd}
	\B Y_{S^{1}_{+}} & Y \\
	Y & p_{*} p^{*} Y
	\arrow[from=2-1, to=2-2]
	\arrow[from=1-2, to=2-2]
	\arrow[from=1-1, to=2-1]
	\arrow[from=1-1, to=1-2]
\end{tikzcd}
\end{equation}
and these squares are covariantly functorial in $Y$. 
\end{construction}

\begin{defin}
We call the commutative squares (\ref{equation:spiral_fibre_square}) and (\ref{equation:spiral_cartesian_square}) of \cref{construction:spiral_fibre_sequence_and_total_spiral_square} the \emph{spiral sequence} and the \emph{spiral total square}. 
\end{defin}
Our main result of this section will show that both of these squares are in fact cartesian. We've shown this is the case for representables in \cref{lemma:spiral_cartesian_squares_for_a_representable}, but the general case does not follow immediately from the construction, as cartesian squares are not closed under sifted colimits. Instead, w will argue using the notion of realization fibration due to Rezk \cite{rezk_realization_fibration}. 

\begin{theorem}
\label{thm:spiral_fibre_sequence}
For any spherical presheaf $Y \in \presheaves_{\Sigma}(\inftyc)$
\begin{enumerate}
    \item the unit map $Y \rightarrow p_{*} p^{*} X$ is a $0$-equivalence and
    \item the spiral sequence (\ref{equation:spiral_fibre_square}) and the spiral total square (\ref{equation:spiral_cartesian_square}) are both cartesian.
\end{enumerate}
\end{theorem}

\begin{proof}
As the subcategory of those $Y \in \presheaves_{\Sigma}(\inftyc)$ which satisfy these two conditions contains all representables by \cref{lemma:spiral_cartesian_squares_for_a_representable}, it is enough to verify that it is closed under sifted colimits. Note that the formation of both squares commutes with sifted colimits by construction. 

We can work with filtered colimits and geometric realizations separately. The former is clear, as both $0$-equivalences and cartesian squares are closed under filtered colimits. Thus, suppose that $Y \simeq | Y_{\bullet} |$ is a geometric realization of spherical presheaves satisfying the above two conditions. We have to show that the spiral sequence of $Y$, which can be identified with 
\[
\begin{tikzcd}
	{| \B (Y_{\bullet})_{S^{1}} |} & { | Y_{\bullet} | } \\
	{\mathrm{pt}} & { | p_{*} p^{*} Y_{\bullet} | }
	\arrow[from=1-1, to=1-2]
	\arrow[from=1-2, to=2-2]
	\arrow[from=2-1, to=2-2]
	\arrow[from=1-1, to=2-1]
\end{tikzcd}, 
\]
also has these two properties. 

Both of the properties are detected levelwise, so it suffices to verify that for any $c \in \inftyc$, the square 
\[
\begin{tikzcd}
	{| (\B (Y_{\bullet})_{S^{1}})(c) |} & { | Y_{\bullet}(c) | } \\
	{\mathrm{pt}} & { | (p_{*} p^{*} Y_{\bullet})(c) | }
	\arrow[from=1-1, to=1-2]
	\arrow[from=1-2, to=2-2]
	\arrow[from=2-1, to=2-2]
	\arrow[from=1-1, to=2-1]
\end{tikzcd}
\]
is cartesian with the right vertical arrow a $0$-equivalence of spaces. Using \cref{prop:spherical_presheaves_take_values_in_h_spaces}, the map 
\[
Y_{\bullet}(c) \rightarrow (p_{*} p^{*} Y_{\bullet})(c)
\]
can be lifted to a morphism of simplicial group objects in the homotopy category of spaces, thus by \cite[Cor. 5.8]{rezk_realization_fibration} is a realization fibration in the sense of Rezk \cite[Def. 1.1]{rezk_realization_fibration}. By definition, this means that taking pullbacks along this map preserves geometric realization, proving that the needed square is cartesian. To see that the right vertical arrow is a $0$-equivalence, observe that $\pi_{0} \colon \spaces \rightarrow \sets$ is a left adjoint and hence commutes with all colimits, in particular geometric realizations.
\end{proof}

% \begin{observation}
% \label{observation:spiral_fibre_sequence_base_change_of_spiral_total_square}
% The two squares of \cref{thm:spiral_fibre_sequence} are closely related. Note that by \cref{lemma:spherical_presheaf_preserves_s1_s1plus_cofibre_sequence}, the canonical sequence 
% \[
% Y_{S^{1}} \rightarrow Y_{S^{1}_{+}} \rightarrow Y. 
% \]
% is fibre. It follows that the spiral sequence can be obtained as a pasting of two squares
% \[
% \begin{tikzcd}
% 	\B Y _{S_{1}} & \B_{Y} Y_{S^{1}_{+}} & Y \\
% 	\mathrm{pt} & Y & p_{*} p^{*} Y
% 	\arrow[from=1-2, to=1-3]
% 	\arrow[from=1-3, to=2-3]
% 	\arrow[from=2-2, to=2-3]
% 	\arrow[from=1-2, to=2-2]
% 	\arrow[from=1-1, to=1-2]
% 	\arrow[from=2-1, to=2-2]
% 	\arrow[from=1-1, to=2-1]
% \end{tikzcd},
% \]
% where on the left we have the above fibre sequence and on the right the spiral total square. In particular, the fact that the spiral sequence is cartesian follows from the the same property of the spiral total square. 
% \end{observation}

By passing to the homotopy groups in the spiral fibre sequence of \cref{thm:spiral_fibre_sequence} we obtain a long exact sequence of $\inftyc$-algebras, which is of the form 
\begin{equation}
\label{equation:spiral_long_exact_sequence}
\ldots \rightarrow (\pi_{1} Y)_{S^{1}} \rightarrow \pi_{2} Y \rightarrow \pi_{2} (p_{*} p^{*} Y)  \rightarrow (\pi_{0} Y)_{S^{1}} \rightarrow \pi_{1} Y \rightarrow \pi_{1} (p_{*} p^{*} Y) \rightarrow 0,
\end{equation}
where we're using that $\pi_{n}(\B Y_{S^{1}}) \simeq \pi_{n-1}(Y)_{S^{1}}$ for $n \geq 1$ and $\pi_{0} (\B Y_{S^{1}}) = 0$. Since the map $Y \rightarrow p_{*} p^{*}$ is a $0$-equivalence, the long exact sequence ends with the isomorphism $\pi_{0} Y \simeq \pi_{0} (p^{*} p_{*} Y)$, which we omit. 

\begin{defin}
\label{defin:spiral_long_exact_sequence}
We call the long exact sequence of (\ref{equation:spiral_long_exact_sequence}) the \emph{spiral long exact sequence}. 
\end{defin}

\begin{rem}
We will not need this, but through the equivalence between spherical presheaves over the $\infty$-category of spheres and the localization of simplicial topological spaces at $E_{2}$-equivalences sketched in \S\ref{subsection:overview_of_the_approach}, one can show that the spiral long exact sequence of \cref{defin:spiral_long_exact_sequence} corresponds to the analogous notion introduced by Dwyer-Kan-Stover in \cite[8.1]{bigraded_homotopy_groups}.
\end{rem}
For calculational purposes, it will be important to know that the spiral long exact sequence is compatible with the $\pi_{0}Y$-module structure, which we verify now. 

\begin{lemma}
\label{lemma:spiral_long_exact_sequence_is_a_sequence_of_modules}
The spiral long exact sequence of  (\ref{equation:spiral_long_exact_sequence}) can be canonically promoted to a long exact sequence of $\pi_{0}Y$-modules, where we equip $(\pi_{n} Y)_{S^{1}}$ with the module structure of \cref{example:action_on_absolute_homotopy_groups_for_spherical_presheaves} and \cref{example:module_structure_on_shifts}. 
\end{lemma}

\begin{proof}
Since the spiral long exact sequence is a long exact sequence of homotopy associated to a fibre sequence $\B Y_{S^{1}} \rightarrow Y \rightarrow p_{*} p^{*} Y$, arguments analogous to the case of spaces show that it can be promoted to a long exact sequence of $\pi_{0} Y$-modules. To see that the module structure on
\[
\pi_{n}(\B Y_{S^{1}}) \simeq \pi_{n}(Y \rightarrow p_{*} p^{*}Y)
\]
(where we abusively identify the module on the right with its underlying $\inftyc$-algebra) corresponds to that of the shift $(\pi_{n-1}Y)_{S^{1}}$ described in \cref{example:module_structure_on_shifts}, note that since the spiral total square is cartesian, we have 
\[
\pi_{n}(Y \rightarrow p_{*} p^{*}Y) \simeq \pi_{n}(\B_{Y} Y_{S^{1}_{+}} \rightarrow Y) \simeq \pi_{n-1}(Y_{S^{1}_{+}} \rightarrow Y)
\]
and it is the left hand side which is used to define the module structure on shifts. 
\end{proof}

\section{Moduli of spaces with prescribed homotopy groups}
\label{section:moduli_of_pi_algebras}

In this chapter we use the theory of classification of $n$-types in $\infty$-categories of spherical presheaves developed in \S\ref{section:classification_of_presheaves} to give decomposition results for moduli of realizations of a given $\Pi$-algebra. 

Previously, we worked relative to \emph{some} indexing $\inftyc$; in this chapter, we will focus exclusively on the following indexing $\infty$-category: 

\begin{defin}
\label{defin:infty_cat_of_wedges_of_spheres}
We let $\spheres$ denote the full subcategory $\spheres \subseteq \spaces_{*}$ of pointed spaces spanned by spaces of the form $\bigvee_{i \in I} S^{k_{i}}$ where $I$ is finite and $k_{i} \geq 1$ for each $i \in I$. We will call objects of $\spheres$ \emph{wedges of spheres}. 
\end{defin}
Note that $\spheres$ satisfies both 
and \cref{assumption:assumption_on_c_to_have_good_presheaves} and \cref{assumption:inftyc_admits_tensors_by_the_circle}. Throughout this chapter, \emph{spherical presheaf} will always refer to an object of $\presheaves_{\Sigma}(\spheres)$, and \emph{spherical homotopy presheaf} will be an object of $\presheaves_{\Sigma}(h \spheres)$. Note that $\spheres$-algebras in the sense of \cref{defin:c_algebra} can be identified with $\Pi$-algebras of Stover, as given in \cref{defin:pi_algebra_definition_in_introduction}. 

\begin{rem}[Spherical homotopy presheaves and $\Pi$-algebras]
The $\infty$-category $\presheaves_{\Sigma}(\spheres)$ can be described purely in terms of $\Pi$-algebras, as it is canonically equivalent to both: 
\begin{enumerate}
\item the $\infty$-category $\Pi\mhyphen\Alg^{an}$ of animated $\Pi$-algebras by \cref{rem:homotopy_presheaves_and_animated_inftyc_algebras} and 
\item the $\infty$-categorical localization $s \Pi\mhyphen\Alg[W^{-1}]$ of the category of simplicial $\Pi$-algebras at levelwise weak equivalences of simplicial sets by \cref{rem:homotopy_presheaves_and_simplicial_c_algebras}.
\end{enumerate}
The first one is the terminology we used in the introduction in the statement of \cref{thm:main_theorem}; we will prove it as \cref{thm:cartesian_square_of_moduli} below using the language of spherical homotopy presheaves. Using the second equivalence, one can translate between our account and the one of Blanc-Dwyer-Goerss \cite{realization_space_of_a_pi_algebra}, but we leave the translation to the interested reader. 
\end{rem}

\subsection{Spaces and spherical presheaves}

In this section we clarify the relation between pointed spaces and spherical presheaves indexed by $\spheres$. The key result is that pointed, connected spaces can be identified with a certain class of spherical presheaves. 

We have a restricted Yoneda embedding $y \colon \spaces _{*} \hookrightarrow \presheaves(\spheres)$ given by 
\[
y(A)(U) := \map_{\spaces_{*}}(U, A)
\]
As this formula sends finite coproducts in $U$ to products of spaces, $y(A)$ is spherical for any $A \in \spaces_{*}$. 

Notice that since all spaces in $\spheres \subseteq \spaces_{*}$ are connected, the restricted Yoneda $y(A)$ depends only on the basepoint component $A_{0} \subseteq A$. Thus, it will be convenient to view the restricted Yoneda embedding as a functor 
\[
y \colon \spaces_{*}^{\geq 1} \rightarrow \presheaves_{\Sigma}(\spheres)
\]
defined on the subcategory of connected spaces. 

\begin{prop}
\label{prop:characterization_of_spacelike_spherical_presheaves}
The restricted Yoneda embedding $y \colon \spaces_{*}^{\geq 1} \rightarrow \presheaves_{\Sigma}(\spheres)$ is a full and faithful functor of $\infty$-categories. Moreover, its essential image consists of those $Y \in \presheaves_{\Sigma}(\spheres)$ such that the loop comparison map $Y_{S^{1}} \rightarrow \Omega Y$ of \cref{construction:loop_and_free_loop_comparison_maps} is an equivalence. 
\end{prop}

\begin{proof}
Since $S^{1} \in \spheres$ has a canonical $\infty$-cogroup structure, $Y(S^{1})$ is an $\infty$-group object in spaces for any $Y \in \presheaves_{\Sigma}(\spheres)$. We define a functor
\[
l \colon \presheaves_{\Sigma}(\spheres) \rightarrow \spaces_{*}^{\geq 1}
\]
by the formula 
\[
l(Y) := \B Y(S^{1}) \simeq \varinjlim_{[n] \in \Delta^{op}} Y(S^{1})^{\times n}
\]
the classifying space of the $\infty$-group $Y(S^{1})$. 

We claim that $l$ is left adjoint to the restricted Yoneda embedding; that is, that there exists an equivalence
\[
\map_{\spaces_{*}}(\B Y(S^{1}), A) \simeq \map_{\spaces_{*}}(Y, y(A))
\]
natural in $A \in \spaces_{*}$ and $Y \in \presheaves_{\Sigma}(\spheres)$. As the classifying space construction preserves sifted colimits of spaces, both sides take sifted colimits in $Y$ to limits. Thus, it is enough to construct such an equivalence when $Y = y(U)$ for $U \in \spheres$. In this case we have 
\[
\map_{\spaces_{*}}(\B y(U)(S^{1}), A) \simeq \map_{\spaces_{*}}(\B \Omega U, A) \simeq \map_{\spaces_{*}} (U, A) \simeq \map_{\spaces_{*}}(y(U), y(A)),
\]
as needed. 

We will now show that the counit $l(y(A)) \rightarrow A$ is an equivalence for any $A \in \spaces_{*}$; this implies that $y$ is fully faithful. After unwrapping the definitions, we see that this counit map can be identified with 
\[
l(y(A)) \simeq \B(y(A)(S^{1})) \simeq \B(\Omega A) \rightarrow A,
\]
the counit of the adjunction $\B \dashv \Omega$ between pointed spaces and $\infty$-groups. This is an equivalence since $A$ is assumed to be connected. 

The essential image of $y$ is given by those $Y \in \presheaves_{\Sigma}(\spheres)$ for which the unit map 
\[
Y \rightarrow y(l(Y))
\]
is an equivalence. After evaluating at $U \in \spheres$, the unit map takes the form 
\begin{equation}
\label{equation:unit_map_in_proof_of_recognition_of_loop_transfer_presheaves}
Y(U) \rightarrow \map_{\spaces_{*}}(U, \B Y(S^{1})).
\end{equation}
We claim that the condition of (\ref{equation:unit_map_in_proof_of_recognition_of_loop_transfer_presheaves}) being an equivalence for any $U \in \spheres$ is equivalent  to the loop comparison map being an equivalence, which will finish the argument. One direction is clear, as anything in the essential image is of the form $y(A)$, in which case the loop comparison map is an equivalence. 

Now assume that $Y$ is arbitrary and the loop comparison map $Y_{S^{1}} \rightarrow \Omega Y$ is an equivalence; we have to show that the unit $Y \rightarrow y(l(Y))$ is an equivalence of presheaves. Since $\Omega \B Y(S^{1}) \simeq Y(S^{1})$, (\ref{equation:unit_map_in_proof_of_recognition_of_loop_transfer_presheaves}) is an equivalence when $U = S^{1}$. Consider the diagram 
\begin{center}
	\begin{tikzpicture}
		\node (TL) at (0, 1.5) {$ Y_{S^{1}} $};
		\node (BL) at (0, 0) {$ \Omega Y$};
		\node (TR) at (1.5, 1.5) {$ (ylY) _{S^{1}} $};
		\node (BR) at (1.5, 0) {$ \Omega (ylY) $};
		
		\draw [->] (TL) to (TR);
		\draw [->] (TL) to (BL);
		\draw [->] (TR) to (BR);
		\draw [->] (BL) to (BR);
	\end{tikzpicture}
\end{center}
where the vertical maps are given by the loop comparison and so are equivalences by assumption. After unwrapping the definition of the comparison map, we see that this means that 
\[
Y(S^{n+1}) \simeq \Omega Y(S^{n}) 
\]
and 
\[
(ylY)(S^{n+1}) \simeq \Omega (ylY)(S^{n}).
\]
Thus, (\ref{equation:unit_map_in_proof_of_recognition_of_loop_transfer_presheaves}) being an equivalence for $U = S^{1}$ implies inductively the same for $U = S^{n}$ for all $n \geq 1$. As any $U \in \spheres$ is a finite coproduct of spheres and both presheaves are spherical, we see that the unit of $Y$ is an equivalence, as needed. 
\end{proof}

\begin{rem}
One natural way to rephrase \cref{prop:characterization_of_spacelike_spherical_presheaves} is as follows: 
 a presheaf $X \colon \spheres^{op} \rightarrow \spaces$ is representable by a pointed space if and only if it takes finite coproducts to products and takes suspensions to loops. 
\end{rem}

% \begin{rem}
% We will not need this, but here's another useful way to think of the adjunction arising in the proof of \cref{prop:characterization_of_spacelike_spherical_presheaves}. Let $\euscr{C}rc$ be the full subcategory of $\spheres$ spanned by wedges of $S^{1}$. By standard computations, $\euscr{C}rc$ is an ordinary category that can be identified with the category of discrete finitely generated free groups, the equivalence being given by the classifying space functor. One can then show that $\presheaves_{\Sigma}(\euscr{C}rc)$ is equivalent to the $\infty$-category of $\infty$-groups, and hence with $\spaces_{*}^{\geq 1}$ by the classifying space functor. Using this identification, the adjunction
% \[
% l \colon  \presheaves_{\Sigma}(\spheres) \rightleftarrows \spaces_{*}^{\geq 1} \colon y
% \]
% constructed in the proof of \cref{prop:characterization_of_spacelike_spherical_presheaves} can be identified with the adjunction $\presheaves_{\Sigma}(\spheres) \rightleftarrows \presheaves_{\Sigma}(\euscr{C}rc)$ given by restriction and right Kan extension.
% \end{rem}

\begin{rem}[Why spherical presheaves?]
Note that \cref{prop:characterization_of_spacelike_spherical_presheaves} above gives some intuition as to why $\presheaves_{\Sigma}(\spheres)$ is the right place to study the moduli theory of $\Pi$-algebras: it contains both the category of $\Pi$-algebras and the $\infty$-category of pointed, connected spaces as full, naturally defined subcategories. 
\end{rem}

\subsection{Moduli of potential $n$-types} 

In this section we construct a tower of $\infty$-groupoids whose limit is the moduli space $\mathcal{M}(A)$ defined below. 

We begin by introducing the relevant notions.

\begin{defin}[{\cite{realization_space_of_a_pi_algebra}}]
\label{defin:realization_and_moduli_of_realizations}
If $A$ is a $\Pi$-algebra, then a pointed, connected space $X$ is a \emph{realization} of $A$ if there exists an isomorphism $\pi_{*}X \simeq A$. 
\end{defin}

\begin{defin}[{\cite{realization_space_of_a_pi_algebra}}]
The \emph{moduli of realizations} denoted by $\mathcal{M}(A)$, is the full subgroupoid of $\spaces_{*}^{\cong}$ spanned by realizations of $A$ and their equivalences.
\end{defin}

\begin{warning}
Note that to be realization of $A$ is a property, rather than a structure: the isomorphism of $\Pi$-algebras $\pi_{*}X \simeq A$ is assumed to exist, but it is not fixed. 
\end{warning}

Notice that by \cref{prop:characterization_of_spacelike_spherical_presheaves} the $\infty$-category of pointed, connected spaces can be identified through the Yoneda embedding with the full subcategory of those spherical presheaves for which the loop comparison map of \cref{construction:loop_and_free_loop_comparison_maps} is an equivalence. 

If $X$ such a spherical presheaf, then the underlying $\Pi$-algebra of the corresponding pointed, connected space is easy to describe:  it is given by the underlying $\spheres$-algebra $X_{\leq 0}$, considered as a discrete object of $\presheaves_{\Sigma}(\spheres)$. Because of that, we can identify $\mathcal{M}(A)$ of \cref{defin:realization_and_moduli_of_realizations} with a full subgroupoid of $\presheaves_{\Sigma}(\spheres)^{\cong}$ spanned by those $X$ such that 
\begin{enumerate}
\item the loop comparison map $X_{S^{1}} \rightarrow \Omega X$ of \cref{construction:loop_and_free_loop_comparison_maps} is an equivalence and 
\item there exists an isomorphism $X_{\leq 0} \simeq A$ of $\Pi$-algebras. 
\end{enumerate}
The second condition suggests a way of using $\presheaves_{\Sigma}(\spheres)$ to interpolate between a $\Pi$-algebra and its realizations by studying $n$-truncations of realizations for $0 \leq n \leq \infty$. We don't know how these truncations might look like before knowing the realizations themselves, so instead we axiomatize their common features in the following way: 

\begin{defin}[{cf. \cite[Definition 9.1]{realization_space_of_a_pi_algebra}}]
\label{defin:potential_n_stages_and_their_moduli}
Let $0 \leq n \leq \infty$. We say that a spherical presheaf $X \in \presheaves_{\Sigma}(\spheres)$ is a \emph{potential $n$-stage for $A$} if it satisfies the following conditions: 
\begin{enumerate}
\item $X_{\leq 0} \simeq A$; that is, the path components of $X$ form a $\Pi$-algebra isomorphic to $A$,
\item $X$ is an $n$-type; that is, $X(U)$ is an $n$-type for each $U \in \spheres$,
\item if $n \geq 1$, the loop comparison map $X_{S^{1}} \rightarrow \Omega X$ is an $(n-1)$-equivalence.
\end{enumerate}
We let $\mathcal{M}_{n}(A)$ denote the full subgroupoid of $\presheaves_{\Sigma}(\spheres)^{\cong}$ spanned by potential $n$-stages for $A$. 
\end{defin}

\begin{example}[Potential 0-stages]
\label{example:moduli_of_0_stages}
Since any map of pointed spaces is a $(-1)$-equivalence, condition $(3)$ is vacuous when $n = 0$. It follows that $\mathcal{M}_{0}(A)$ is the subcategory of discrete objects of $\presheaves_{\Sigma}(\spheres)$ (in other words, of $\Pi$-algebras) that happen to be isomorphic to $A$. Thus, the $\infty$-groupoid $\mathcal{M}_{0}(A)$ is canonically equivalent to $\B \Aut_{\Pi \mhyphen \Alg}(A)$.
\end{example}

\begin{example}[Potential $\infty$-stages]
By  \cref{prop:characterization_of_spacelike_spherical_presheaves}, 
 the Yoneda embedding induces a canonical equivalence
 \[
 \mathcal{M}_{\infty}(A) \simeq \mathcal{M}(A)
 \]
 with the moduli of realization of \cref{defin:realization_and_moduli_of_realizations}. To see this, note that when $n = \infty$, then condition $(3)$ in \cref{defin:potential_n_stages_and_their_moduli} forces the loop comparison map to be an equivalence. 
\end{example}

\begin{rem}
In the case of $n < \infty$, notice that the second condition implies that $\Omega X$ is an $(n-1)$-type. Thus, the third condition simply states that the loop comparison map gives an equivalence $(X_{S^{1}}) _{\leq n-1} \simeq \Omega X$ or, intuitively, that it is as connected as possible. 
\end{rem}

\begin{observation}[Homotopy groups of potential $n$-stages]
\label{observation:homotopy_groups_of_potential_n_stages}
Using the spiral long exact sequence of \cref{defin:spiral_long_exact_sequence}, which by \cref{lemma:spiral_long_exact_sequence_is_a_sequence_of_modules} can be promoted to a long exact sequence of modules, one reads off that the homotopy groups of a potential $n$-stage $X$ for a $\Pi$-algebra $A$ are given by 
\[
\pi_{n} X \simeq A_{S^{n}}.
\]
Here, right hand side has the classical $A$-module structure on the shift, which we recalled in \cref{example:module_structure_on_shifts}. Note that these isomorphism are canonically determined once a choice of an isomorphism $\pi_{0} X \simeq A$ is made. The homotopy groups of a potential $n$-stage vanish above degree $n$. 
\end{observation}

It is immediate from \cref{defin:potential_n_stages_and_their_moduli} that for each $\infty \geq m \geq n \geq 0$, $n$-truncation induces a functor
\[
(-)_{\leq n} \colon \mathcal{M}_{m}(A) \rightarrow \mathcal{M}_{n}(A).
\]
Assembling these together, we have a tower of $\infty$-groupoids
\[
\mathcal{M}(A) \simeq \mathcal{M}_{\infty}(A) \rightarrow \ldots \rightarrow \mathcal{M}_{2}(A) \rightarrow \mathcal{M}_{1}(A) \rightarrow \mathcal{M}_{0}(A) \simeq \B \Aut(A)
\]
which interpolates between the groupoid of $\Pi$-algebras isomorphic to $A$ and the moduli of its realizations. We now verify that this tower is convergent: 

\begin{prop}
\label{prop:moduli_of_infinity_stages_as_a_limit}
The natural map $\mathcal{M}_{\infty}(A) \rightarrow \varprojlim \mathcal{M}_{n}(A)$ is an equivalence.
\end{prop}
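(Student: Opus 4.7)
The plan is to construct an explicit inverse $\Phi: \varprojlim \mathcal{M}_n(A) \to \mathcal{M}_\infty(A)$ sending a compatible system $\{X_n\}$ to its limit $X := \varprojlim X_n$ taken in $P_\Sigma(\spheres)$. Since $P_\Sigma(\spheres) \hookrightarrow P(\spheres)$ preserves limits, which are computed levelwise (\textbf{Remark \ref{rem:limits_colimits_in_spherical_presheaves}}), this target is well-defined. The bulk of the work is to verify that $X$ is a potential $\infty$-stage and that $\Phi$ is a two-sided inverse to the truncation map $X \mapsto \{X_{\leq n}\}$.

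My first step would be a levelwise Postnikov convergence argument: for each $U \in \spheres$, the sequence $\{X_n(U)\}$ is a Postnikov tower of spaces, since the compatibility $(X_{n+1})_{\leq n} \simeq X_n$ of spherical presheaves, combined with the levelwise computation of truncations, gives that $X_{n+1}(U) \to X_n(U)$ is the $n$-truncation. Hence $X(U) = \varprojlim X_n(U)$ is the limit of this Postnikov tower and satisfies $X(U)_{\leq n} \simeq X_n(U)$. In particular $X_{\leq 0} \simeq X_0 \simeq A$, and $X_{\leq n} \simeq X_n$ in $P_\Sigma(\spheres)$ for all finite $n$, which takes care of one inverse identity; the other direction, $X \simeq \varprojlim X_{\leq n}$ for $X \in \mathcal{M}_\infty(A)$, is the same levelwise Postnikov convergence applied to the Postnikov tower of $X$ itself.

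The principal step, and the main obstacle, is showing that the loop transfer map $X_{S^1} \to \Omega X$ is an equivalence. Both shift (precomposition with $S^1 \otimes -$) and $\Omega$ (a pullback) preserve limits in $P_\Sigma(\spheres)$, so $X_{S^1} \simeq \varprojlim (X_n)_{S^1}$ and $\Omega X \simeq \varprojlim \Omega X_n$, and the loop transfer on $X$ is the limit of the loop transfers on the $X_n$. Using that $(X_n)_{S^1}$ is an $n$-type while $\Omega X_n$ is an $(n-1)$-type, the same levelwise Postnikov analysis gives $(X_{S^1})_{\leq n-1} \simeq ((X_n)_{S^1})_{\leq n-1}$ and $(\Omega X)_{\leq n-1} \simeq \Omega X_n$. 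Under these identifications, the $(n-1)$-truncation of the loop transfer for $X$ becomes the $(n-1)$-truncation of the loop transfer for $X_n$, which is an equivalence by the third defining condition on a potential $n$-stage. Since this holds for every $n$ and an equivalence of spherical presheaves is detected on $n$-truncations for all $n$ (equivalently, on homotopy $\inftyc$-algebras), the loop transfer of $X$ is an equivalence.

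The main difficulty is the bookkeeping: keeping track of the mismatched connectivity bounds (source of loop transfer is an $n$-type while target is an $(n-1)$-type), checking that each relevant operation commutes with the tower limit in the correct way, and ensuring that the two composites unwind to the identity. Once these are handled, the equivalence $\mathcal{M}_\infty(A) \simeq \varprojlim \mathcal{M}_n(A)$ follows.
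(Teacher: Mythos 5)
Your proposal is correct and rests on the same core fact as the paper's proof, namely convergence of Postnikov towers in $P_{\Sigma}(\spheres)$ reduced to the levelwise statement in spaces: where the paper packages this via Lurie's span of trivial fibrations between $P_{\Sigma}(\spheres)$ and its Postnikov towers and pretowers (HTT 7.2.1.10--11), you build the inverse $\{X_{n}\} \mapsto \varprojlim X_{n}$ by hand and check the two composites. Your explicit verification that the loop transfer map of the limit is an equivalence --- by commuting the shift, $\Omega$ and truncation past the tower limit --- is exactly the point the paper leaves implicit in ``passing to the relevant subcategories,'' so this is a welcome elaboration rather than a divergence.
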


\begin{proof}
The limit on the right hand side can be identified with the $\infty$-groupoid of those functors $X \colon N(\mathbb{Z}_{\geq 0})^{op} \rightarrow \presheaves_{\Sigma}(\spheres)$ with the property that for each $m \geq n$, the induced map $X_{m} \rightarrow X_{n}$ expresses the target as the $n$-truncation of the domain and such that $X_{n}$ is a potential $n$-stage for all $n$. Thus, it is an $\infty$-groupoid of certain Postnikov pretowers in $\presheaves_{\Sigma}(\spheres)$ in the sense of \cite[5.5.6.23]{lurie_higher_topos_theory}. 

Now, Postnikov towers in $\presheaves(\spheres)$ are convergent by \cite[7.2.1.3, 7.2.1.10]{lurie_higher_topos_theory} and so the same is true for $\presheaves_{\Sigma}(\spheres)$ as a presheaf is spherical if and only if all of its truncations are. We thus have a span of $\infty$-categories 

\begin{center}
$\presheaves_{\Sigma}(\spheres) \leftarrow \mathrm{Post}^{+}(\presheaves_{\Sigma}(\spheres)) \rightarrow \mathrm{Post}(\presheaves_{\Sigma}(\spheres))$
\end{center}
between spherical presheaves and respectively, Postnikov towers and Postnikov pretowers therein. Both arrows are equivalences of $\infty$-categories: the right one by functoriality of limits and the left one by \cite[7.2.1.11]{lurie_higher_topos_theory}. Passing to the relevant subgroupoids of towers of potential $n$-stages we get the needed statement. 
\end{proof}

\subsection{Certain cartesian squares}

In this section, we relate the moduli of potential $n$-stages $\mathcal{M}_{n}(A)$ of \cref{defin:potential_n_stages_and_their_moduli} for varying $n$ by fitting them in certain cartesian squares. As a consequence, we will give a complete algebraic description of the fibres of $\mathcal{M}_{n+1}(A) \rightarrow \mathcal{M}_{n}(A)$; this will finish the proof of \cref{thm:main_theorem}.

The map $\mathcal{M}_{n}(A) \rightarrow \mathcal{M}_{n-1}(A)$ is given by $(n-1)$-truncation. Thus, if $Y \in \mathcal{M}_{n-1}(A)$ is a potential $(n-1)$-stage, to understand the fibre over $Y$ is to understand in how many ways one can extend $Y$ to a potential $n$-stage. 

\begin{prop}
\label{prop:fibre_of_map_from_potential_n_stage_into_potential_n_minus_1_stage}
Let $X \in \mathcal{M}_{n}(A)$ be a potential $n$-stage, $Y \in \mathcal{M}_{n-1}(A)$ a potential $(n-1)$-stage, and suppose there exists an equivalence $ X_{\leq (n-1)} \simeq Y$. Then, $X$ is of type $Y +_{A} (A_{S^{n}}, n)$ in the sense of \cref{defin:spherical_presheaf_of_type_as_lambda_module}. 
\end{prop}

\begin{proof}
The homotopy groups of $X$ and $Y$ are given explicitly by \cref{observation:homotopy_groups_of_potential_n_stages} and we see that the relative homotopy groups vanish outside of $\pi_{n}(X \rightarrow Y) \simeq A_{S^{n}}$, giving the first part. 
\end{proof}

\begin{cor}
In the situation of \cref{prop:fibre_of_map_from_potential_n_stage_into_potential_n_minus_1_stage}, $X$ is classified up to equivalence by a homotopy class of $0$-equivalences
\[
Y \rightarrow \B_{A}(A_{S^{n}}, n+1), 
\]
into the Eilenberg-MacLane presheaf $\B_{A}(A_{S^{n}}, n+1) := \B^{\spheres}_{A}(A_{S^{n}}, n+1)$  of \cref{defin:eilenberg_maclane_presheaf}, well-defined up the action of $\Aut_{\presheaves_{\Sigma}(\spheres)}(Y) \times \Aut(A, A_{S^{n}})$. 
\end{cor}

\begin{proof}
This is a restatement of \cref{cor:postnikov_invariant_of_a spherical_presheaf_in_terms_of_homotopy_classes}. 
\end{proof}

\begin{warning}
If $Y$ is a potential $(n-1)$-stage, then not all presheaves of type $Y +_{A} (A_{S^{n}}, n)$ are potential $n$-stages. We give a criterion for when this happens in \cref{theorem:conditions_on_a_classifying_map_of_an_n_stage}.
\end{warning}

Recall that by \cref{cor:equivalence_between_em_and_homotopy_em} we have an equivalence
\begin{equation}
\label{equation:equivalence_of_em_presheaf_in_presheaves_and_homotopy_presheaves}
\B_{A}(A_{S^{n}}, n+1) \simeq p_{*} \B^{h}_{A}(A_{S^{n}}, n+1),
\end{equation}
where $\B^{h}_{\Lambda}(M, n+1) := \B^{h \spheres}_{A}(A_{S^{n}}, n+1)$ is the Eilenberg-MacLane presheaf internal to the $\infty$-category of spherical homotopy presheaves and
\[
p^{*} \colon \presheaves_{\Sigma}(\spheres) \rightleftarrows \presheaves_{\Sigma}(h\spheres) :p_{*}
\]
is the restriction, left Kan extension adjunction of \cref{prop:adjunction_between_spherical_preshaves_and_homotopy_spherical_presheaves}. It follows from (\ref{equation:equivalence_of_em_presheaf_in_presheaves_and_homotopy_presheaves}) that any map of spherical presheaves
\[
\psi \colon Y \rightarrow \B_{A}(A_{S^{n}}, n+1) \simeq p_{*} \B^{h}_{A}(A_{S^{n}}, n+1)
\]
has an adjoint 
\[
\psi^{adj} \colon p^{*}Y \rightarrow \B^{h}_{A}(A_{S^{n}}, n+1)
\]
which is a morphism of spherical homotopy presheaves. 

\begin{theorem}
\label{theorem:conditions_on_a_classifying_map_of_an_n_stage}
Let $Y$ be a potential $(n-1)$-stage for a $\Pi$-algebra $A$ and assume that $X$ is a spherical presheaf of type $Y +_{A} (A_{S^{n}}, n)$.  Then $X$ is a potential $n$-stage if and only if the adjoint $\psi^{adj} \colon p^{*} Y \rightarrow \B^{h}_{A}(A_{S^{n}}, n+1)$ of its classifying map $\psi$ is an equivalence. 
\end{theorem}

\begin{proof}
Since $\pi_{0}Y \simeq A$, any $X$ of type $Y +_{A} (A_{S^{n}}, n)$ is an $n$-type whose underlying $\Pi$-algebra is isomorphic to $A$. Thus, such an $X$ is a potential $n$-stage if and only if the loop comparison map $X_{S^{1}} \rightarrow \Omega X$ is an $(n-1)$-equivalence. 

Recall from \cref{prop:universal_family_over_moduli_of_n_types} that if $\mathrm{E}_{A}(A_{S^{n}}, n+1) \rightarrow  \B_{A}(A_{S^{n}}, n+1)$ denotes the universal morphism of spherical presheaves levelwise fibred in Eilenberg-MacLane spaces of degree $n$, the composite 
\[
\mathrm{E}_{A}(A_{S^{n}}, n+1) \rightarrow  \B_{A}(A_{S^{n}}, n+1) \rightarrow A 
\]
is an equivalence. The same applies to spherical homotopy presheaves, and in what follows below, we will identify the universal family with $A$. 

Let $\psi \colon Y \rightarrow \B_{A}(A_{S^{n}}, n+1)$ be the classifying map, which using the equivalence (\ref{equation:equivalence_of_em_presheaf_in_presheaves_and_homotopy_presheaves}) can be factored as 
\[
Y \rightarrow p_{*} p^{*} Y \rightarrow  p_{*} \B^{h}_{A}(A_{S^{n}}, n+1),
\]
where the first map is the unit of the adjunction and the second one is $p_{*}(-)$ applied to the adjoint $\psi^{adj}$. We want to show that the latter is an equivalence if and only if $X \in \mathcal{M}_{n}(A)$. 

Let us denote by $Z$ the presheaf over $p^{*} Y$ classified by the adjoint, so that we have a pullback diagram 
\[
\begin{tikzcd}
	Z & A \\
	p^{*} Y & \B^{h}_{A}(A_{S^{n}}, n+1)
	\arrow[from=1-1, to=1-2]
	\arrow[from=1-2, to=2-2]
	\arrow["\psi^{adj}", from=2-1, to=2-2]
	\arrow[from=1-1, to=2-1]
\end{tikzcd}
\]
of spherical homotopy presheaves, where as mentioned above we identify $A$ with the universal family. Applying $p_{*}$ to all vertices, we obtain a diagram 
\begin{equation}
\label{equation:crucial_two_squares_in_conditions_for_pullback_to_be_an_n_stage}
\begin{tikzpicture}[baseline=(current  bounding  box.center)]
		\node (TL) at (0, 1.5) {$ X $};
		\node (BL) at (0, 0) {$ Y $};
		\node (TM) at (2.5, 1.5) {$ p_{*} Z $}; 
		\node (BM) at (2.5, 0) {$ p_{*} p^{*} Y $};
		\node (TR) at (5, 1.5) {$ A $};
		\node (BR) at (5, 0) {$ \B_{A}(A_{S^{n}}, n+1) $};
		
		\draw [->] (TL) to (TM);
		\draw [->] (TM) to (TR);
		\draw [->] (BL) to (BM);
		\draw [->] (BM) to (BR);
		
		\draw [->] (TL) to (BL);
		\draw [->] (TM) to (BM);
		\draw [->] (TR) to (BR);
\end{tikzpicture}
\end{equation}
of spherical presheaves, where we implicitly identify $p_{*}(A) \simeq A$, as $p_{*}$ induces an equivalence between the subcategories of discrete objects. The right square is cartesian, because $p_{*}$ preserves all limits, and so is the outer square, since $X$ is by definition the pullback of the universal family. We conclude from the pasting lemma that the left square is also cartesian. 

By construction, all of the maps in (\ref{equation:crucial_two_squares_in_conditions_for_pullback_to_be_an_n_stage}) are $0$-equivalences. Since pullbacks along $0$-equivalences reflect equivalences by \cite[6.2.3.16]{lurie_higher_topos_theory}, and so does $p_{*}(-)$, we see that $\psi^{adj}$ is an equivalence if and only if $p_{*} Z \rightarrow A$ is; in other words, if and only if $p_{*} Z$ is discrete. 

We now consider the diagram 

\begin{center}
	\begin{tikzpicture}
		\node (TL) at (0, 1.5) {$ \B X_{S^{1}} $};
		\node (BL) at (0, 0) {$ \B Y_{S^{1}} $};
		\node (TM) at (1.5, 1.5) {$ X $}; 
		\node (BM) at (1.5, 0) {$ Y $};
		\node (TR) at (3, 1.5) {$ p_{*} Z $}; 
		\node (BR) at (3, 0) {$ p_{*} p^{*} Y $};
		
		\draw [->] (TL) to (TM);
		\draw [->] (TM) to (TR);
		\draw [->] (BL) to (BM);
		\draw [->] (BM) to (BR);
		
		\draw [->] (TL) to (BL);
		\draw [->] (TM) to (BM);
		\draw [->] (TR) to (BR);
	\end{tikzpicture},
\end{center}
where the square on the right is the left square of (\ref{equation:crucial_two_squares_in_conditions_for_pullback_to_be_an_n_stage}) and the square on the left is given by adjoints to the loop comparison maps. The bottom two maps can be promoted to a fibre sequence by \cref{thm:spiral_fibre_sequence} and since the right square is cartesian, we conclude that the fibre of the map $X \rightarrow p_{*} Z$ can be identified with $\B Y_{S^{1}}$. Since $X \rightarrow Y$ is an $(n-1)$-truncation, $\B X_{S^{1}} \rightarrow \B Y_{S^{1}}$ is an $n$-truncation and thus 
\begin{equation}
\label{equation:n_truncation_is_fibre_in_n_potentiality_criterion}
\mathrm{fib}(X \rightarrow p_{*}Z) \simeq (\B X_{S^{1}})_{\leq n}.
\end{equation}

Using that $\pi_{k} ((\B X_{S^{1}})_{\leq n}) \simeq (\pi_{k-1} X)_{S^{1}}$ for $1 \leq k \leq n$, the identification (\ref{equation:n_truncation_is_fibre_in_n_potentiality_criterion}) yields a long exact sequence of homotopy 
\[
 0 \rightarrow \pi_{n+1} (p_{*} Z) \rightarrow (\pi_{n-1}X)_{S^{1}} \rightarrow \pi_{n} X \rightarrow \pi_{n} (p_{*} Z) \rightarrow (\pi_{n-2}X)_{S^{1}} \rightarrow \ldots 
\]
ending in 
\[
\ldots \rightarrow \pi_{2} (p_{*} Z) \rightarrow (\pi_{0} X)_{S^{1}} \rightarrow \pi_{1} X \rightarrow \pi_{1} (p_{*} Z) \rightarrow 0,
\]
where we omit the isomorphism $\pi_{0} X \simeq \pi_{0} (p_{*} Z)$. We deduce that the loop comparison maps $(\pi_{k-1} X)_{S^{1}} \rightarrow \pi_{k} X$ are isomorphisms for $1 \leq k \leq n$; that is, $X$ is a potential $n$-stage, if and only if $\pi_{k} (p_{*}Z) = 0$ for $k > 0$. By \cref{prop:spherical_presheaves_take_values_in_h_spaces}, the latter condition is equivalent to $p_{*}Z$ being discrete, ending the argument.   
\end{proof}

\begin{cor}
\label{cor:condition_for_an_n_stage_to_be_extendable}
Let $Y$ be a potential $(n-1)$-stage. Then $Y$ can be extended to a potential $n$-stage if and only if $p^{*} Y$ is equivalent, as a spherical homotopy presheaf, to $\B^{h}_{A}(A_{S^{n}}, n+1)$. 
\end{cor}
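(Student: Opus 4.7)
The plan is to deduce the corollary directly from \textbf{Proposition \ref{prop:conditions_on_a_classifying_map_of_an_n_stage}} by arguing both directions of the "if and only if". The work is really just in unwinding the universal property of $B^{h}_{A}(A_{S^{n}}, n+1)$ together with the adjunction $\pi_{!} \dashv \pi^{*}$; no new ideas are required.

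For the forward direction, suppose $Y$ admits an extension to a potential $n$-stage $X$, meaning $X_{\leq (n-1)} \simeq Y$. The truncation map $X \rightarrow Y$ is an $(n-1)$-equivalence, and because the homotopy groups of both $X$ and $Y$ are forced by \textbf{Remark \ref{rem:homotopy_groups_of_potential_n_stages}}, the presheaf $X$ is of type $Y +_{A} (A_{S^{n}}, n)$. By \textbf{Theorem \ref{thm:classification_of_n_type_presheaves_fixed_module}}, $X$ is classified by a map $Y \rightarrow B_{A}(A_{S^{n}}, n+1)$, and since $X$ is an $n$-stage, \textbf{Proposition \ref{prop:conditions_on_a_classifying_map_of_an_n_stage}} yields that the adjoint $\pi_{!} Y \rightarrow B^{h}_{A}(A_{S^{n}}, n+1)$ is an equivalence. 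In particular $\pi_{!}Y$ and $B^{h}_{A}(A_{S^{n}}, n+1)$ are equivalent as spherical homotopy presheaves.

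For the converse, suppose we are given some equivalence $\phi: \pi_{!} Y \xrightarrow{\sim} B^{h}_{A}(A_{S^{n}}, n+1)$. Its adjoint under $\pi_{!} \dashv \pi^{*}$ is a map $Y \rightarrow \pi^{*} B^{h}_{A}(A_{S^{n}}, n+1)$, which by \textbf{Corollary \ref{cor:equivalence_between_em_and_homotopy_em}} may be identified with a classifying map $Y \rightarrow B_{A}(A_{S^{n}}, n+1)$. Let $X \rightarrow Y$ be the pullback of the universal family $A \rightarrow B_{A}(A_{S^{n}}, n+1)$ along this map; by \textbf{Proposition \ref{prop:description_of_the_eilenberg_maclane_presheaf}}, the universal family is levelwise fibred in Eilenberg-MacLane spaces of type $K(A_{S^{n}}, n)$, hence so is $X \rightarrow Y$, which forces $X_{\leq (n-1)} \simeq Y$. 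Finally, since the adjoint of our chosen classifying map is the equivalence $\phi$ by construction, \textbf{Proposition \ref{prop:conditions_on_a_classifying_map_of_an_n_stage}} guarantees that $X$ is a potential $n$-stage, producing the desired extension of $Y$. There is no real obstacle here; the proof is essentially a packaging statement, and the only point worth stating with care is the identification $\pi^{*} B^{h}_{A}(A_{S^{n}}, n+1) \simeq B_{A}(A_{S^{n}}, n+1)$ used to pass between the two adjoints.
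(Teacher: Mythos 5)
Your proposal is correct and is exactly the argument the paper intends: the corollary is stated without a separate proof because it is the direct unwinding of Proposition \ref{prop:conditions_on_a_classifying_map_of_an_n_stage} via the adjunction $\pi_{!} \dashv \pi^{*}$ and the identification $\pi^{*}B^{h}_{A}(A_{S^{n}}, n+1) \simeq B_{A}(A_{S^{n}}, n+1)$, which is precisely what you carry out in both directions. No gaps.
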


\begin{proof}
This is immediate from \cref{theorem:conditions_on_a_classifying_map_of_an_n_stage}.
\end{proof}

According to the above, whether a given potential $(n-1)$-stage $Y$ can be extended to a potential $n$-stage is controlled by the spherical homotopy presheaf $p^{*} Y$. In general, the latter need not be equivalent to $\B_{A}^{h}(A_{S^{n}}, n+1)$; however, the following shows that it has the same homotopy groups:

\begin{lemma}
\label{lem:homotopy_type_of_pushforward_of_an_n_stage}
Let $Y$ be a potential $(n-1)$-stage for a $\Pi$-algebra $A$. Then $p^{*}Y$ is a spherical homotopy presheaf of type $A +_{A} (A_{S^{n}}, n+1)$. 
\end{lemma}

\begin{proof}
We consider the spiral fibre sequence
\[
\B Y_{S^{1}} \rightarrow Y \rightarrow p_{*} p^{*} Y
\]
of \cref{thm:spiral_fibre_sequence}, where the first map induces an isomorphism
\[
(\pi_{i-1}Y)_{S^{1}} \simeq \pi_{i}(\B Y_{S^{1}}) \rightarrow \pi_{i} Y
\]
for $1 \leq i \leq n-1$ by assumption that $Y$ is a potential $(n-1)$-stage. Examining the associated long exact sequence in homotopy (that is, the spiral long exact sequence of \cref{defin:spiral_long_exact_sequence}) we see that the homotopy groups of $p_{*} p^{*}Y$ vanish outside of 
\[
\pi_{0}(p_{*} p^{*}Y) \simeq \pi_{0}(Y) \simeq A
\]
and
\[
\pi_{n+1}(p_{*} p^{*}Y) \simeq \pi_{n} (\B Y_{S^{1}}) \simeq (\pi_{n-1}Y) _{S^{1}} \simeq (A_{S^{n-1}})_{S^{1}} \simeq A_{S^{n}}. 
\]
As $p_{*}$ is given by precomposition, it preserves homotopy groups, and we deduce the same is true for homotopy groups of $p^{*} Y$. 
\end{proof}

\begin{rem}
If $X$ is a spherical homotopy presheaf of type $A +_{A} (A_{S^{n}}, n+1)$, then projection onto path components gives a fibre sequence of equivalent to one of the form 
\[
\B^{n+1} A_{S^{n}} \rightarrow X \rightarrow A,
\]
where we use that $A_{S^{n}}$ has an abelian group structure (in fact, can be promoted to an $A$-module as in \cref{example:module_structure_on_shifts}) to make sense of the iterated bar construction. Thus, $X$ can informally be thought of as an extension of $A$ by the presheaf $\B^{n+1}A_{S^{n}}$, with prescribed action of $A$ on $\pi_{n+1}X$. 

Note that by \cref{prop:description_of_the_eilenberg_maclane_presheaf}, up to equivalence there is a unique presheaf of type $A + _{A} (A_{S^{n}}, n+1)$ which admits a section, namely $\B^{h}_{A}(A_{S^{n}}, n+1)$. Thus, \cref{cor:condition_for_an_n_stage_to_be_extendable} can be phrased as saying that a potential $(n-1)$-stage $Y$ can be extended to a potential $n$-stage if and only if the extension $\B^{n+1}A_{S^{n}} \rightarrow p^{*}Y \rightarrow A$ is ``split'' in the sense that it admits a section $A \rightarrow p^{*}Y$. 
\end{rem}
Our main result, \cref{thm:cartesian_square_of_moduli} below, relates the moudli of potential $n$-stages for varying $n$ by fitting them into a certain cartesian square. The definition of the relevant square is somewhat involved, and so we first describe it separately: 

\begin{construction}
\label{construction:main_theorem_square}
For each $n \geq 1$, we will define a square of $\infty$-groupoids of the form
\[
\begin{tikzpicture}
		\node (TL) at (0, 1.5) {$ \mathcal{M}_{n}(A) $};
		\node (BL) at (0, 0) {$ \mathcal{M}_{n-1}(A) $};
		\node (TR) at (3.3, 1.5) {$ \B\Aut(A, A_{S^{n}}) $};
		\node (BR) at (3.3, 0) {$  \mathcal{M}^{h}(A +_{A} (A_{S^{n}}, n+1)) $};
		
		\draw [->] (TL) to (TR);
		\draw [->] (TL) to (BL);
		\draw [->] (TR) to (BR);
		\draw [->] (BL) to (BR);
	\end{tikzpicture}. 
\]
We will show that the square canonically commutes in \cref{lemma:main_square_involving_moduli_of_potential_n_stages_commutes} below. The vertices are as follows: 
\begin{enumerate}
\item $\mathcal{M}_{n}(A)$ and $\mathcal{M}_{n-1}(A)$ are the moduli of realizations of \cref{defin:realization_and_moduli_of_realizations},
\item $\mathcal{M}^{h}(A +_{A} (A_{S^{n}}, n+1))$ is the full subgroupoid of $\presheaves_{\Sigma}(h \spheres)^{\cong}$ spanned by spherical homotopy presheaves of type $A +_{A} (A_{S^{n}}, n+1)$,
\item $\B \Aut(A, A_{S^{n}})$ is the classifying space for the group of automorphism of \cref{defin:automorphism_of_pair_of_c_algebra_and_module}, which we identify with the full subgroupoid of $\presheaves_{\Sigma}(h \spheres)_{/\mathcal{M}_{h \spheres}(\abeliangroups)}$ spanned by maps equivalent to $A \rightarrow \mathcal{M}_{h \spheres}(\abeliangroups)$, the map classifying the $A$-module $A_{S^{n}}$.
\end{enumerate}
The maps in the square are defined as follows: 
\begin{enumerate} 
\item the left vertical map is the truncation $(-)_{\leq n-1} \colon \mathcal{M}_{n}(A) \rightarrow \mathcal{M}_{n-1}(A)$, 
\item the upper horizontal map sends $X \in \mathcal{M}_{n}$ to the map $\pi_{0}X \rightarrow \mathcal{M}_{\spheres}(\abeliangroups)$ classifying the $\pi_{0}X$-module $\pi_{n}X$,
\item the bottom horizontal map sends $Y \in \mathcal{M}_{n-1}(Y)$ to  $p^{*}Y$, which is of type $A +_{A} (A_{S^{n}}, n+1)$ by \cref{lem:homotopy_type_of_pushforward_of_an_n_stage}, 
\item the right vertical map sends $\Lambda \rightarrow \mathcal{M}_{h \spheres}(\abeliangroups)$ to the pullback $\Lambda \times_{\mathcal{M}_{h \spheres}(\abeliangroups)} \mathcal{M}_{h \spheres}(\emspaces_{n})$. 
\end{enumerate}
In the case of the last map, note that if $\Lambda \rightarrow \mathcal{M}_{h \spheres}(\abeliangroups)$ classifies a $\Lambda$-module $M$, then
\[
\Lambda \times_{\mathcal{M}_{h \spheres}(\abeliangroups)} \mathcal{M}_{h \spheres}(\emspaces_{n}) \simeq B^{h}_{\Lambda}(M, n+1) 
\]
by definition. The latter Eilenberg-MacLane spherical homotopy presheaf is of type $A +_{A} (A_{S^{n}}, n+1)$ since $(\Lambda, M) \simeq (A, A_{S^{n}})$. 
\end{construction}

\begin{lemma}
\label{lemma:main_square_involving_moduli_of_potential_n_stages_commutes}
The square of $\infty$-groupoids of \cref{construction:main_theorem_square} canonically commutes. 
\end{lemma}

\begin{proof}
Unwrapping \cref{construction:main_theorem_square}, we see that we have to show the following: if $X \in \mathcal{M}_{n}(A)$ is a potential $n$-stage, then there is canonical equivalence 
\[
p_{*} (X_{\leq n-1}) \simeq B^{h}_{\pi_{0}X}(\pi_{n}X, n+1)
\]
in $\presheaves_{\Sigma}(h \spheres)$. There is a canonical map, namely the adjoint of the map 
\[
X_{\leq n-1} \rightarrow B_{\pi_{0}(X)}(\pi_{n}X, n+1) \simeq p_{*} B^{h}_{\pi_{0}(X)}(\pi_{n}X, n+1)
\]
classifying $X \rightarrow X_{\leq n-1}$, and by \cref{theorem:conditions_on_a_classifying_map_of_an_n_stage} this adjoint is an equivalence. 
\end{proof}

\begin{theorem}
\label{thm:cartesian_square_of_moduli}
The square 
\[
	\begin{tikzpicture}
		\node (TL) at (0, 1.5) {$ \mathcal{M}_{n}(A) $};
		\node (BL) at (0, 0) {$ \mathcal{M}_{n-1}(A) $};
		\node (TR) at (3.3, 1.5) {$ \B\Aut(A, A_{S^{n}}) $};
		\node (BR) at (3.3, 0) {$  \mathcal{M}^{h}(A +_{A} (A_{S^{n}}, n+1)) $};
		
		\draw [->] (TL) to (TR);
		\draw [->] (TL) to (BL);
		\draw [->] (TR) to (BR);
		\draw [->] (BL) to (BR);
	\end{tikzpicture}
 \]
of \cref{construction:main_theorem_square} is a cartesian square of $\infty$-groupoids. 
\end{theorem}

\begin{proof}
We have to show that for any $Y \in \mathcal{M}_{n-1}(A)$, the induced map 
\begin{equation}
\label{equation:map_between_fibres_in_main_result}
\{ Y \} \times_{\mathcal{M}_{n-1}(Y)} \mathcal{M}_{n}(A) \rightarrow \{ p^{*}Y \} \times_{\mathcal{M}^{h}(A +_{A} (A_{S^{n}}, n+1))}  \B\Aut(A, A_{S^{n}})
\end{equation}
between fibres is an equivalence. There are two possible cases: 
\begin{enumerate}
\item the homotopy presheaves $p^{*} Y$ and $\B^{h}_{A}(A_{S^{n}}, n+1)$ are \emph{equivalent}, 
\item the homotopy presheaves $p^{*} Y$ and $\B^{h}_{A}(A_{S^{n}}, n+1)$ are \emph{not equivalent}. 
\end{enumerate}
In the second case, both fibres are empty by \cref{theorem:conditions_on_a_classifying_map_of_an_n_stage} and there is nothing to show. Thus, we move to the first case. 

The target of the comparison map (\ref{equation:map_between_fibres_in_main_result}) can be identified with the $\infty$-category of triples 
\[
(\Lambda, M, \phi \colon p^{*} Y \simeq B^{h}_{\Lambda}(M, n+1))
\]
of a $\Pi$-algebra $\Lambda$, a module $M$, and an equivalence $\phi$ of homotopy presheaves. By \cref{theorem:conditions_on_a_classifying_map_of_an_n_stage}, a choice of $\phi$ is equivalent to giving an adjoint 
\[
\psi \colon Y \rightarrow \B_{\Lambda}(M, n+1) \simeq p_{*} \B^{h}_{\Lambda}(M, n+1)
\]
classifying a potential $n$-stage. Note that $\psi$ is necessarily a $0$-equivalence, as it can be written as the composite 
\[
Y \rightarrow p_{*} p^{*} Y \rightarrow p_{*} \B^{h}_{\Lambda}(M, n+1),
\]
where the first arrow is the unit which is an $0$-equivalence by \cref{thm:spiral_fibre_sequence} and the second one is $p_{*}(-)$ applied to the equivalence $\phi$. Using the universal property of the Eilenberg-MacLane presheaf of \cref{observation:universal_property_of_em_presheaf}, we see that a choice of $\psi$ is equivalent to that of a triple 
\[
(X \rightarrow Y, \pi_{0}X \simeq \Lambda, \pi_{n}X \simeq M),
\]
where the first term is a potential $n$-stage together with a map $X \rightarrow Y$ inducing an equivalence $X_{\leq n-1} \simeq Y$, the second is an isomorphism of $\Pi$-algebras, and the third one an isomorphism of modules. 

Combining these two triple descriptions together we see that the target of 
(\ref{equation:map_between_fibres_in_main_result}) can be identified with the $\infty$-category of quintuples 
\[
(X \rightarrow Y, \Lambda, M, \pi_{0}X \simeq \Lambda, \pi_{n}X \simeq M)
\]
Fixing the first term, the space of the last four is clearly contractible as it consists of a choice of $(\Lambda, M)$ together with an isomorphism to $(\pi_{0}X, \pi_{n}X)$. Thus, the fibre is equivalent to the space of $X \rightarrow Y$ as above; ie. with $\{ Y \} \times_{\mathcal{M}_{n-1}(A)} \mathcal{M}_{n}(A)$. This ends the argument. 
\end{proof}

\begin{rem}[Obstructions to a realization]
\label{rem:obstructions_to_realization}
We describe an interpretation of \cref{thm:cartesian_square_of_moduli} as an obstruction theory for constructing realizations, where the obstructions take values in Andr\'{e}-Quillen cohomology of $\Pi$-algebras. This is explained in detail in the original paper of Blanc-Dwyer-Goerss \cite{realization_space_of_a_pi_algebra}, but in the interest of being self-contained, we include it here. 

The equivalence $\mathcal{M}_{\infty}(A) \simeq \varprojlim \mathcal{M}_{n}(A)$ gives an inductive way of producing realizations of a given $\Pi$-algebra $A$. To do so, one begins with a point in $\mathcal{M}_{0}(A) \simeq \B\Aut(A)$, which is connected and so there is a unique choice up to (a non-canonical) equivalence, and tries to lift it up the tower of $\infty$-groupoids
\[
\mathcal{M}(A) \simeq \mathcal{M}_{\infty}(A) \rightarrow \ldots \rightarrow \mathcal{M}_{1}(A) \rightarrow \mathcal{M}_{0}(A)
\]

The cartesian square of \cref{thm:cartesian_square_of_moduli} implies that a point in $\mathcal{M}_{n-1}(A)$ can be lifted to $\mathcal{M}_{n}(A)$ if and only if it lands in the correct component of $\mathcal{M}^{h}(A +_{A} (A_{S^{n}}, n+1))$; namely, the component containing the image of the connected space $\B \Aut(A, A_{S^{n}})$. Thus, there is an \emph{obstruction} to performing the lift, which is an element 
\begin{equation}
\label{equation:path_components_where_obstruction_lives}
\theta_{n} \in \pi_{0} \mathcal{M}^{h}(A +_{A} (A_{S^{n}}, n+1))
\end{equation}
of the set of path components. 

We can use \cref{thm:classification_of_n_type_presheaves_fixed_module} applied to $\inftyc = \spheres$ to give an explicit description of (\ref{equation:path_components_where_obstruction_lives}). To see this, note that we have a fibre sequence 
\[
\map_{0-\mathrm{eq}}(A, \B^{h}_{A}(A_{S^{n}}, n+2)) \rightarrow \mathcal{M}^{h}(A +_{A} (A_{S^{n}}, n+1)) \rightarrow \B\Aut_{\Pi\mhyphen\Alg}(A) \times \B\Aut(A, A_{S^{n}}),
\]
where $\map_{0-\mathrm{eq}}$ denotes the subspace of maps which are $0$-equivalences; that is, those that induce an isomorphism $A \simeq \pi_{0} \B^{h}_{A}(A_{S^{n}}, n+2)$. As $\Aut_{\Pi\mhyphen\Alg}(A)$ acts freely and transitively on the set of such isomorphisms, we can instead look at the space of maps such that the composite 
\[
A \rightarrow \B^{h}_{A}(A_{S^{n}}, n+2) \rightarrow A,
\]
where the second map is the canonical projection, is the identity. This yields a fibre sequence 
\begin{equation}
\label{equation:fibre_in_identification_of_obstructions}
\map_{\presheaves_{\Sigma}(h\spheres)_{/A}}(A, \B^{h}_{A}(A_{S^{n}}, n+2)) \rightarrow \mathcal{M}^{h}(A +_{A} (A_{S^{n}}, n+1)) \rightarrow \B\Aut(A, A_{S^{n}}),
\end{equation}
which using \cref{observation:homotopy_quotient_by_an_infty_group} gives an identification 
\[
\pi_{0} \mathcal{M}^{h}(A +_{A} (A_{S^{n}}, n+1)) \simeq \faktor{\pi_{0} \map_{\presheaves_{\Sigma}(h\spheres)_{/A}}(A, \B^{h}_{A}(A_{S^{n}}, n+2))}{\Aut(A, A_{S^{n}})}. 
\]
with the quotient of the set of path components. The homotopy groups of this mapping space can be described as the Andr\'{e}-Quillen cohomology groups of $\Pi$-algebras of \cref{rem:andre_quillen_cohomology_of_c_algebras_as_maps_into_em_presheaf}; in detail, we have
\[
\pi_{i}(\map_{\presheaves_{\Sigma}(h\spheres)_{/A}}(A, \B^{h}_{A}(A_{S^{n}}, n+2))) \simeq \mathrm{H}^{n+2-i}_{A}(A, A_{S^{n}}).
\]
We conclude that the obstruction $\theta_{n}$ of (\ref{equation:path_components_where_obstruction_lives}) lies in the quotient set
\[
\pi_{0} \mathcal{M}^{h}(A +_{A} (A_{S^{n}}, n+1)) \simeq \faktor{\mathrm{H}^{n+2}_{A}(A, A_{S^{n}})}{\Aut(A, A_{S^{n}})}. 
\]
In other words, $\theta_{n}$ can be identified with a cohomology class, well-defined up to the action of the automorphism group of the pair $(A, A_{S^{n}})$. 
\end{rem}

\begin{rem}[Generalizing to other kinds of algebras]
\label{rem:other_indexing_infty_categories}
A diligent reader might notice that the only result in this chapter that explicitly depended on the indexing $\infty$-category being $\spheres$ was \cref{prop:characterization_of_spacelike_spherical_presheaves}, which gave us the the identification $\spaces_{*}^{\geq 1} \hookrightarrow \presheaves_{\Sigma}(\spheres)$ of the $\infty$-category of pointed, connected spaces with the $\infty$-category of spherical presheaves whose loop comparison map is an equivalence. 

In particular, for any indexing $\infty$-category $\inftyc$ satisfying \cref{assumption:assumption_on_c_to_have_good_presheaves} and \cref{assumption:inftyc_admits_tensors_by_the_circle} and a $\inftyc$-algebra $A$ one can define spaces $\mathcal{M}_{n}(A)$ as in \cref{defin:potential_n_stages_and_their_moduli}. These spaces then fit into cartesian squares of \cref{thm:cartesian_square_of_moduli} and thus one has obstructions for lifting points up the tower living in Andr\'{e}-Quillen cohomology of $\inftyc$-algebras as in \cref{rem:obstructions_to_realization}. However, to make this into a useful theory one needs some analogue of \cref{prop:characterization_of_spacelike_spherical_presheaves} to identify $\mathcal{M}_{\infty}(A)$ with objects of interest. 

We refer an interested reader to the work of Balderrama \cite{balderrama2021deformations}, which greatly extends these ideas to a variety of interesting contexts. 
\end{rem}

\bibliographystyle{amsalpha}
\bibliography{moduli_of_pi_algebras_bibliography}

\providecommand{\bysame}{\leavevmode\hbox to3em{\hrulefill}\thinspace}
\providecommand{\MR}{\relax\ifhmode\unskip\space\fi MR }
% \MRhref is called by the amsart/book/proc definition of \MR.
\providecommand{\MRhref}[2]{%
  \href{http://www.ams.org/mathscinet-getitem?mr=#1}{#2}
}
\providecommand{\href}[2]{#2}
\begin{thebibliography}{GHN17}

\bibitem[Bal21]{balderrama2021deformations}
William Balderrama, \emph{Deformations of homotopy theories via algebraic
  theories}, arXiv preprint arXiv:2108.06801 (2021).

\bibitem[BDG04]{realization_space_of_a_pi_algebra}
D.~Blanc, W.~G. Dwyer, and P.~G. Goerss, \emph{The realization space of a
  {$\Pi$}-algebra: a moduli problem in algebraic topology}, Topology
  \textbf{43} (2004), no.~4, 857--892. \MR{2061210 (2005b:55008)}

\bibitem[Ber06]{bergner_rigidification_of_algebras}
Julia~E. Bergner, \emph{Rigidification of algebras over multi-sorted theories},
  Algebr. Geom. Topol. \textbf{6} (2006), 1925--1955. \MR{2263055
  (2007f:18005)}

\bibitem[Bla95]{blanc1995higher}
David Blanc, \emph{Higher homotopy operations and the realizability of homotopy
  groups}, Proceedings of the London Mathematical Society \textbf{3} (1995),
  no.~1, 214--240.

\bibitem[BV06]{boardman2006homotopy}
John~Michael Boardman and Rainer~M Vogt, \emph{Homotopy invariant algebraic
  structures on topological spaces}, vol. 347, Springer, 2006.

\bibitem[CS19]{cesnavicius2019purity}
Kestutis Cesnavicius and Peter Scholze, \emph{Purity for flat cohomology},
  arXiv preprint arXiv:1912.10932 (2019).

\bibitem[DKS89]{dwyer1989towers}
WG~Dwyer, DM~Kan, and JH~Smith, \emph{Towers of fibrations and homotopical
  wreath products}, Journal of Pure and Applied Algebra \textbf{56} (1989),
  no.~1, 9--28.

\bibitem[DKS93]{dwyer_kan_stover}
William~G Dwyer, Daniel~M Kan, and Christopher~R Stover, \emph{An ${E}_2$-model
  category structure for pointed simplicial spaces}, Journal of Pure and
  Applied Algebra \textbf{90} (1993), no.~2, 137--152.

\bibitem[DKS95]{bigraded_homotopy_groups}
W.G. Dwyer, D.M. Kan, and C.R. Stover, \emph{The bigraded homotopy groups
  $\pi_{i, j}{X}$ of a pointed simplicial space ${X}$}, Journal of Pure and
  Applied Algebra \textbf{103} (1995), no.~2, 167 -- 188.

\bibitem[Fra11]{frankland_2_stage_postnikov_systems}
Martin Frankland, \emph{Moduli spaces of 2-stage postnikov systems}, Topology
  and its Applications \textbf{158} (2011), no.~11, 1296 -- 1306.

\bibitem[GH04]{moduli_spaces_of_commutative_ring_spectra}
P.~G. Goerss and M.~J. Hopkins, \emph{Moduli spaces of commutative ring
  spectra}, Structured ring spectra, London Math. Soc. Lecture Note Ser., vol.
  315, Cambridge Univ. Press, Cambridge, 2004, pp.~151--200. \MR{2125040
  (2006b:55010)}

\bibitem[GH13]{moduli_problems_for_structured_ring_spectra}
P.~G. Goerss and M.~J. Hopkins, \emph{Moduli problems for structured ring
  spectra},
  \url{http://www.math.northwestern.edu/~pgoerss/spectra/obstruct.pdf}, 08
  2013.

\bibitem[GHN17]{gepner2015lax}
David Gepner, Rune Haugseng, and Thomas Nikolaus, \emph{Lax colimits and free
  fibrations in {$\infty$-categories}}, Documenta Mathematica \textbf{22}
  (2017), 1255--1266.

\bibitem[Hai21]{haine2021splitting}
Peter~J Haine, \emph{Splitting free loop spaces}, Available on author's
  website.

\bibitem[Hat02]{hatcher_algebraic_topology}
Allen Hatcher, \emph{Algebraic topology}, Cambridge University Press,
  Cambridge, 2002. \MR{1867354 (2002k:55001)}

\bibitem[Hil55]{hilton1955homotopy}
Peter~J Hilton, \emph{On the homotopy groups of the union of spheres}, J.
  London Math. Soc \textbf{30} (1955), no.~154-172, 71.

\bibitem[Joy02]{joyal2002quasi}
Andr{\'e} Joyal, \emph{Quasi-categories and kan complexes}, Journal of Pure and
  Applied Algebra \textbf{175} (2002), no.~1, 207--222.

\bibitem[Law63]{lawvere1963functorial}
F~William Lawvere, \emph{Functorial semantics of algebraic theories},
  Proceedings of the National Academy of Sciences \textbf{50} (1963), no.~5,
  869--872.

\bibitem[Lur]{higher_algebra}
Jacob Lurie, \emph{Higher algebra},
  http://www.math.harvard.edu/~lurie/papers/HA.pdf.

\bibitem[Lur09]{lurie_higher_topos_theory}
Jacob Lurie, \emph{Higher topos theory}, Annals of Mathematics Studies, vol.
  170, Princeton University Press, Princeton, NJ, 2009. \MR{2522659
  (2010j:18001)}

\bibitem[May06]{may2006geometry}
J~Peter May, \emph{The geometry of iterated loop spaces}, vol. 271, Springer,
  2006.

\bibitem[MH]{lurie_hopkins_brauer_group}
Jacob~Lurie Michael~Hopkins, \emph{On {Brauer Groups of Lubin-Tate Spectra I}},
  http://www.math.harvard.edu/~lurie/papers/Brauer.pdf.

\bibitem[Pst22]{pstrkagowski2018synthetic}
Piotr Pstr{\k{a}}gowski, \emph{Synthetic spectra and the cellular motivic
  category}, Inventiones mathematicae (2022).

\bibitem[PV22]{pstrkagowski2022abstract}
Piotr Pstr{\k{a}}gowski and Paul VanKoughnett, \emph{Abstract goerss-hopkins
  theory}, Advances in Mathematics \textbf{395} (2022), 108098.

\bibitem[Qui70]{quillen1970co}
Daniel Quillen, \emph{On the (co-)homology of commutative rings}, Proc. Symp.
  Pure Math, vol.~17, 1970, pp.~65--87.

\bibitem[Qui06]{quillen2006homotopical}
Daniel~G Quillen, \emph{Homotopical algebra}, vol.~43, Springer, 2006.

\bibitem[Rez]{rezk_realization_fibration}
Charles Rezk, \emph{When are homotopy colimits compatible with homotopy base
  change?},
  http://www.math.uiuc.edu/~rezk/i-hate-the-pi-star-kan-condition.pdf.

\bibitem[Sto88]{stover1988van}
Christopher~Roy Stover, \emph{A {van Kampen spectral sequence for higher
  homotopy groups}}, Ph.D. thesis, Massachusetts Institute of Technology, 1988.

\end{thebibliography}

\end{document}